\numberwithin{equation}{section}
\newtheorem{prop}{Proposition}
\newtheorem{lemma}[prop]{Lemma}
\newtheorem{thm}[prop]{Theorem}
\newtheorem{cor}[prop]{Corollary}
\numberwithin{prop}{section}
\theoremstyle{definition}
\newtheorem{defn}[prop]{Definition}
\newtheorem{rmk}[prop]{Remark}
\newcommand{\del}{\partial}
\newcommand{\delb}{\bar{\partial}}
\newcommand{\brs}[1]{\left| #1 \right|}
\renewcommand{\gg}{\gamma}
\newcommand{\gD}{\Delta}
\newcommand{\gl}{\lambda}
\newcommand{\gw}{\omega}
\newcommand{\ga}{\alpha}
\newcommand{\gb}{\beta}
\newcommand{\N}{\nabla}
\renewcommand{\bar}[1]{\overline{#1}}
\renewcommand{\i}{\sqrt{-1}}
\newcommand{\IP}[1]{\left<#1\right>}
\newcommand{\Scal}{R}
\DeclareMathOperator{\SU}{SU}
\DeclareMathOperator{\Rc}{Rc}
\DeclareMathOperator{\tr}{tr}
\DeclareMathOperator{\spn}{span}
\begin{document}

\title[Rigidity results for non-K\"ahler Calabi-Yau geometries on threefolds]{Rigidity results for non-K\"ahler Calabi-Yau geometries on threefolds}

\author{Vestislav Apostolov}
\address{V.\,Apostolov\\ D{\'e}partement de Math{\'e}matiques\\ UQAM \\
 and \\ Institute of Mathematics and Informatics\\ Bulgarian Academy of Sciences}
\email{\href{mailto:apostolov.vestislav@uqam.ca}{apostolov.vestislav@uqam.ca}}

\author{Giuseppe Barbaro}
\address{G.\,Barbaro\\ Department of Mathematics, Aarhus University, Ny Munkegade 118, 8000 Aarhus C,
Denmark}
\email{\href{mailto:g.barbaro@math.au.dk}{g.barbaro@math.au.dk}}

\author{Kuan-Hui Lee}
\address{K-H. Lee, Rowland Hall\\
          University of California, Irvine\\
          Irvine, CA 92617}
 \email{\href{mailto:kuanhuil@uci.edu}{kuanhuil@uci.edu}}

 \author{Jeffrey Streets}
 \address{J. Streets Rowland Hall\\
          University of California, Irvine\\
          Irvine, CA 92617}
 \email{\href{mailto:jstreets@uci.edu}{jstreets@uci.edu}}
\thanks{V.A. was supported in part by an NSERC Discovery Grant. G.B. is a member of GNSAGA of INdAM and has been supported by Sapere Aude: DFF-Starting Grant “Conformal geometry: metrics and cohomology”.  K-H.L. and J.S. are both supported by the NSF via DMS-2203536}

\date{April 23, 2025}

\begin{abstract}  We derive a canonical symmetry reduction associated to a compact non-K\"ahler Bismut-Hermitian-Einstein manifold.  In real dimension $6$, the transverse geometry is conformally K\"ahler, and we give a complete description in terms of a single scalar PDE for the underlying K\"ahler structure.  In the case when the soliton potential is constant, we show that that the Bott-Chern number $h^{1,1}_{BC} \geq 2$, and that equality holds if and only if the metric is Bismut-flat, and hence a quotient of either $\SU(2) \times \mathbb R \times \mathbb C$ or $\SU(2) \times \SU(2)$.
\end{abstract}

\maketitle

\section{Introduction}

Non-K\"ahler Calabi-Yau metrics are a subject of intense recent interest (e.g. \cite{fernandez2014non, finosurvey, JFS, garciafern2018canonical,ivanov2010heterotic,phong2019geometric,picard2024strominger,st-geom, tosatti2015non}), relevant to mathematical physics and uniformization problems in complex geometry.  Given $(M^{2n}, g, J)$ a Hermitian manifold, we say that the metric is pluriclosed if $\i \del \delb \gw = 0$.  There is a unique Hermitian connection with skew-symmetric torsion, which we refer to as the Bismut connection \cite{Bismut} (cf. also \cite{StromingerSST}).  This connection produces a representative of the first Chern class, the Bismut-Ricci form.  Pluriclosed metrics with vanishing Bismut-Ricci form are called Bismut-Hermitian-Einstein (BHE) (cf. \cite{JFS, GRFBook}), and if $\gw$ is K\"ahler they are Ricci-flat, i.e. are Calabi-Yau.  Such metrics are natural candidates for `canonical metrics' in complex non-K\"ahler geometry, arising for instance as fixed points of the pluriclosed flow \cite{PCF}.  Moreover the associated metric $g$ and three-form $H = -d^c \gw$ satisfy a supergravity equation arising from the string effective action \cite{Ivanovstring,Polchinski,PCFReg}.

Yau's theorem \cite{YauCC} shows the diverse array of K\"ahler Calabi-Yau manifolds in dimension three, with a large (conjecturally finite) number of diffeotypes, each endowed with a nontrivial moduli space of complex structures and K\"ahler Calabi-Yau metrics.  Naively, one may expect that upon allowing for non-K\"ahler geometries, we should thus expect a vast array of solutions.  There are hints that this is not the case: a remarkable result of Gauduchon-Ivanov states that for complex surfaces, the only BHE manifolds are Bismut-flat, and in fact quotients of the standard Bismut-flat Hopf surface \cite{GauduchonIvanov}.  Here we investigate these structures in higher dimensions.  First, we know that these metrics are automatically generalized Ricci solitons with soliton potential $f$ (\cite{JFS}, cf. Proposition \ref{p:BHEprop} below).  Furthermore, building on \cite{JFS} we show that there is a natural $\mathfrak t^2$-symmetry generated by the Bismut-parallel vector fields
\begin{align*}
    {V = (\theta^{\sharp} - \N f), \quad JV = J \left( \theta^{\sharp} - \N f \right)}.
\end{align*}
Using this symmetry, we show in Proposition \ref{p:EM} that the transverse geometry satisfies a kind of twisted Einstein-Maxwell equation.  In real dimension six we obtain a much more precise structure:

\begin{thm} \label{t:mainthm} Fix $(M^{6}, \gw, J)$ a compact Bismut-Hermitian-Einstein manifold which is not a K\"ahler Calabi-Yau manifold.  Then
\begin{enumerate}
    \item $(M, \gw, J)$ admits two commuting Killing vector fields of constant norm,  giving rise to a regular rank $2$ Riemannian foliation which admits an invariant transversal K\"ahler structure $(\omega_K^T, J^T)$ of everywhere positive transversal scalar curvature $\Scal(\omega_K^T)$, satisfying the equation
    \[ \tfrac{1}{2}\left((dd^c \Scal(\omega_K^T)) \wedge \omega_K^T\right) = \alpha^T \wedge \alpha^T + \rho_K^T \wedge \rho_K^T, \]
    where $\rho_K^T$ is the transversal Ricci form of $\omega_K^T$ and $\alpha^T$ is a transversal closed primitive $(1,1)$-form.  Conversely, any K\"ahler surface $(M^T,\omega_K^T, J^T, \alpha^T)$ satisfying the above conditions gives rise to a (locally defined) non-K\"ahler Bismut-Hermitian-Einstein manifold. If, furthermore, $M^T$  is smooth and compact,  and the vector subspace of  $H^2(M^T, \mathbb{R})$  spanned by  $[\alpha^T]$ and $ c_1(M^K, J^K)$ admits  a basis in $H^2(M, \mathbb{Z})$, then $(M^T, J^T, \omega^T_K, \alpha^T)$ gives rise to a compact non-K\"ahler Bismut-Hermitian-Einstein manifold.
    \item The soliton potential of $(M, \gw, J)$ is constant if and only if $\gw$ is Gauduchon, if and only if the transversal K\"ahler metric $\omega_K^T$ has constant scalar curvature. 
    \item The dimension $h^{1,1}_{BC}(M)= {\rm dim}_{\mathbb{C}}H^{1,1}_{\rm BC}(M, \mathbb{C})$ of the Bott-Chern cohomology group of $(M, J)$ is greater than or equal to $2$.
\end{enumerate}
\end{thm}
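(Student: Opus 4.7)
The plan is to exhibit two $\mathbb{R}$-linearly independent classes in $H^{1,1}_{\rm BC}(M)$, using the transverse structure from part (1). The three basic, $d$-closed $(1,1)$-forms $\omega_K^T$, $\rho_K^T$, and $\alpha^T$ each determine a class in $H^{1,1}_{\rm BC}(M)$, and I would argue that some pair among them is linearly independent. That $[\omega_K^T]_{\rm BC}\neq 0$ is quick: if $\omega_K^T = i\partial\bar\partial\phi$ on $M$, then averaging $\phi$ over the $\mathfrak{t}^2$-action produces a basic $\bar\phi$ satisfying the same identity, which descends to $i\partial\bar\partial\bar\phi = \omega_K^T$ on the transverse K\"ahler space; wedging with $\omega_K^T$ and integrating yields $0=\int_{M^T}(\omega_K^T)^2 >0$, a contradiction.

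The heart of the argument is to produce a second independent class, using the PDE from part (1) together with the positivity of $\Scal(\omega_K^T)$. I would split into two cases according to whether $\alpha^T$ vanishes identically. \emph{Case A: $\alpha^T\not\equiv 0$.} Take the second class to be $[\alpha^T]_{\rm BC}$. If $\alpha^T = \lambda\omega_K^T + i\partial\bar\partial\psi$ on $M$, averaging and descending produces the same identity on the transverse. Wedging with $\omega_K^T$ and using the primitivity $\alpha^T\wedge \omega_K^T =0$ together with Stokes forces $\lambda=0$. Then $\alpha^T$ is $i\partial\bar\partial$-exact on the transverse; since $d\alpha^T=0$ also gives $\partial \alpha^T = 0$, integration by parts yields $\int_{M^T}\alpha^T\wedge\alpha^T = 0$. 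By the Hodge--Riemann bilinear relations for primitive $(1,1)$-forms on a compact K\"ahler surface, this forces $\alpha^T\equiv 0$, contradicting the case assumption.

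\emph{Case B: $\alpha^T\equiv 0$.} Take the second class to be $[\rho_K^T]_{\rm BC}$. If $\rho_K^T = c\omega_K^T + i\partial\bar\partial\psi$ on $M$, then on the transverse, wedging with $\omega_K^T$ and integrating gives $\int_{M^T}\rho_K^T\wedge \omega_K^T \propto \int_{M^T}\Scal(\omega_K^T)(\omega_K^T)^2 > 0$, hence $c>0$. On the other hand, when $\alpha^T = 0$ the PDE from part (1) integrated over $M^T$ yields $\int_{M^T}\rho_K^T\wedge\rho_K^T = 0$ (the left side is $d$-exact); substituting the ansatz gives $c^2\int_{M^T}(\omega_K^T)^2 = 0$, so $c=0$, a contradiction.

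The main technical obstacle is the descent of Bott--Chern relations from $M$ to the transverse space: as $M^T$ is in general only a K\"ahler orbifold, justifying the averaging plus descent of an $i\partial\bar\partial$-exact identity requires an orbifold Hodge-theoretic setup, or equivalently a direct argument on $M$ pairing basic forms against the leafwise basic volume $V^\flat\wedge(JV)^\flat$ and invoking Stokes on $M$. Once this step is in place, the positivity and Hodge--Riemann computations above complete the proof.
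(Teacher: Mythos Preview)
Your proposal addresses only part (3), taking the transverse structure of part (1) as given; this is legitimate, since the paper likewise derives (3) from (1). Your argument is correct, and the technical gap you identify is genuine but handled exactly as you suggest: one integrates on $M$ against $\eta\wedge J\eta$ rather than on $M^T$, and all boundary terms from Stokes vanish because any basic $5$-form on a rank-$4$ horizontal distribution is zero.

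The route, however, differs from the paper's. The paper proves (3) in two structural steps: first, it shows the \emph{transverse} second Betti number satisfies $b_2^T(M,\mathcal V)\geq 2$, via a Bochner--Weitzenb\"ock argument (using $\Scal(\omega_K^T)>0$ and the explicit form of $W^+$ for K\"ahler surfaces) to get $b_+^T=1$, together with a contradiction argument to get $b_-^T\geq 1$; second, it builds an exact sequence
\[
0\to H^{1,1}_{\rm BC}(M,\mathcal V,\mathbb R)\to H^{1,1}_{\rm BC}(M,\mathbb R)\to H^{1,0}_{\rm BC}(M,\mathcal V)
\]
and identifies $H^{1,1}_{\rm BC}(M,\mathcal V,\mathbb R)\cong\mathcal H^2_T(M,\mathcal V)$ via transverse K\"ahler identities, yielding $h^{1,1}_{\rm BC}(M)\geq b_2^T(M,\mathcal V)$. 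Your approach bypasses transverse Hodge theory entirely and exhibits two independent Bott--Chern classes directly, which is more elementary for (3) alone. What it does \emph{not} give is the finer statement $h^{1,1}_{\rm BC}(M)=b_2^T(M,\mathcal V)$ under the Gauduchon hypothesis; the paper needs this later in the proof of Corollary~\ref{c:rigiditycor}, where the assumption $h^{1,1}_{\rm BC}(M)=2$ must be converted into $b_-^T(M,\mathcal V)=1$ in order to force $\alpha^T$ and the primitive part of $\rho_K^T$ to be proportional.
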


As a corollary of this structural theorem, we extend the strong rigidity result of Gauduchon-Ivanov to 3-folds, assuming that the Bott-Chern cohomolgy group $H^{1,1}_{\rm BC}(M, \mathbb{C})$ is 2-dimensional, and that the soliton potential is constant.  We also classify solutions under some other natural geometric constraints.

\begin{cor} \label{c:rigiditycor} Fix $(M^{6}, \gw, J)$ a compact Bismut-Hermitian-Einstein manifold which is not a K\"ahler Calabi-Yau manifold.   Then
\begin{enumerate}
    \item $(M, \gw, J)$ has constant soliton potential and $h^{1,1}_{\rm BC}(M)=2$ if and only if $(M, \omega, J)$ is isometric to a quotient of either $\SU(2) \times \mathbb R \times \mathbb C$ or $\SU(2) \times \SU(2)$, endowed with a bi-invariant metric and compatible left-invariant complex structure.
    \item $(M, \omega, J)$  has harmonic Lee form  if and only if it is isometric to a quotient of  $\SU(2) \times \mathbb R \times \mathbb C$, endowed with a bi-invariant metric and compatible left-invariant complex structure.
    \item If the transversal K\"ahler metric $\omega_K$ in Theorem~\ref{t:mainthm} is K\"ahler-Einstein, then $(M, \omega, J)$ is isometric to a quotient of $\SU(2) \times \SU(2)$, endowed with a bi-invariant metric and compatible left-invariant complex structure.
\end{enumerate}
\end{cor}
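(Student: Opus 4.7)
The plan is to apply Theorem \ref{t:mainthm} throughout, leveraging the transverse K\"ahler surface $(M^T,\omega_K^T,J^T)$ and primitive closed $(1,1)$-form $\alpha^T$ with their fundamental PDE, and to combine this with the classification of compact non-K\"ahler Bismut-flat Hermitian $6$-manifolds (Cartan-Schouten, Wang, Agricola-Friedrich), which gives exactly the two listed bi-invariant models.

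For part (1), the backward implication is a direct verification that both bi-invariant models are Bismut-flat BHE with constant soliton potential and $h^{1,1}_{BC}=2$. For $(\Rightarrow)$, Theorem \ref{t:mainthm}(2) gives constant transverse scalar curvature, so $dd^c\Scal(\omega_K^T)=0$ and the fundamental PDE collapses to $\alpha^T\wedge\alpha^T + \rho_K^T\wedge\rho_K^T = 0$. Decomposing $\rho_K^T = \tfrac{\Scal}{2}\omega_K^T + (\rho_K^T)_0$ into trace and primitive parts yields the pointwise identity
\[ \brs{\alpha^T}^2 + \brs{(\rho_K^T)_0}^2 = \tfrac{\Scal(\omega_K^T)^2}{2}. \]
The Bott-Chern hypothesis $h^{1,1}_{BC}(M)=2$ then needs to be translated via transverse Bott-Chern cohomology on $M^T$ into a linear dependence $[\alpha^T]\in\spn\{[\omega_K^T],[\rho_K^T]\}$: saturation of the bound of Theorem \ref{t:mainthm}(3) leaves no further independent class. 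Since $\alpha^T$ is closed and primitive it is harmonic on $M^T$, so dependence together with primitivity and Hodge theory forces $\alpha^T\equiv 0$. The pointwise identity then yields $(\rho_K^T)_0\equiv 0$, i.e.\ the transverse metric is K\"ahler-Einstein of positive scalar curvature; the structural equations of Theorem \ref{t:mainthm} in turn force the full Bismut curvature of $(M,\omega,J)$ to vanish, and the classification identifies $M$ as the stated quotient.

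For part (2), harmonicity of $\theta$ gives both co-closedness (the Gauduchon condition, hence constant soliton potential by Theorem \ref{t:mainthm}(2) and $V=\tfrac{1}{2}\theta^\sharp$) and closedness $d\theta=0$. Bismut-parallelism of $V$ together with $d\theta=0$ forces $\theta^\sharp$ to be Levi-Civita parallel: the connection difference is controlled by the torsion $H=-d^c\omega$, which contracts to zero against $\theta^\sharp$ precisely when $d\theta=0$. A parallel vector field combined with $J$-invariance splits off an $\mathbb{R}\times\mathbb{C}$ factor of the universal cover, the $3$-dimensional orthogonal complement carries the bi-invariant structure of $\SU(2)$, and this isolates the $\SU(2)\times\mathbb{R}\times\mathbb{C}$ case from part (1). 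For part (3), a transverse K\"ahler-Einstein metric has constant scalar curvature and $(\rho_K^T)_0\equiv 0$; the first puts us in the setting of part (1), and the second combined with the PDE gives $\brs{\alpha^T}^2\equiv\Scal^2/2 > 0$, constant. Since $\mathbb{P}^2$ has vanishing primitive $(1,1)$-cohomology, any closed primitive harmonic form on it is zero, contradicting the constant positive norm of $\alpha^T$; hence the transverse base must be $\mathbb{P}^1\times\mathbb{P}^1$ with the product Fubini-Study metric, which corresponds to $\SU(2)\times\SU(2)$.

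The main obstacle I anticipate is the cohomological step in part (1): making precise the relationship between $h^{1,1}_{BC}(M)$ and the transverse Bott-Chern cohomology of $M^T$ through the rank-$2$ Riemannian foliation, and showing that saturation of the bound $h^{1,1}_{BC}(M)\geq 2$ forces $[\alpha^T]\in\spn\{[\omega_K^T],[\rho_K^T]\}$. Once that cohomological reduction is in place the remaining geometric consequences (transverse K\"ahler-Einstein, Bismut-flatness, and the identification via the classification of positive K\"ahler-Einstein surfaces or Bismut-flat $6$-manifolds) follow routinely from the structural theorem.
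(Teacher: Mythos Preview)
Your proposal has a genuine gap in the key step of part~(1), and the error propagates into the other parts.

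The cohomological hypothesis $h^{1,1}_{\rm BC}(M)=2$ translates (via Propositions~\ref{p:Betti} and~\ref{p:full-Betti}) into $b_-^T(M,\mathcal V)=1$, i.e.\ the space of transversal anti-self-dual harmonic $2$-forms is one-dimensional. Both $\alpha^T$ and the primitive part $\gamma^T:=-(\rho_K^T)_0$ lie in this space, so the correct conclusion is only that $\alpha^T$ and $\gamma^T$ are \emph{proportional}, not that $\alpha^T\equiv 0$. Your deduction ``dependence together with primitivity and Hodge theory forces $\alpha^T\equiv 0$'' fails: a harmonic primitive form in $\spn\{\omega_K^T,\rho_K^T\}$ is a multiple of $(\rho_K^T)_0$, which is generically nonzero. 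Worse, even granting $\alpha^T=0$, your pointwise identity gives $\brs{(\rho_K^T)_0}^2=\tfrac{1}{8}\Scal^2>0$, so the transverse metric is \emph{not} K\"ahler--Einstein; the traceless Ricci tensor merely has constant norm. Consequently the step ``the structural equations force the full Bismut curvature to vanish'' is unsupported.

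The paper's route is substantially more involved. From proportionality of $\alpha^T$ and $\gamma^T$ one gets only that $\brs{F_V}$ and $\brs{F_{JV}}$ are constant (Lemma~\ref{LP4}), hence the transverse Ricci tensor has constant (not necessarily equal) eigenvalues. One then invokes the splitting argument of \cite{VTA} for K\"ahler metrics with constant Ricci eigenvalues, adapted to the transversal setting, to show the transverse Ricci tensor is parallel. This yields $\nabla^T F_V=\nabla^T F_{JV}=0$, whence $\nabla^B H=0$ by Proposition~\ref{prop:cov deriv}, and finally Bismut-flatness via \cite{brienza2024cyt} and the classification via \cite{ZhengBismutflat}. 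Your argument for part~(2) also has a gap: from $d\theta=0$ and $\nabla^B V=0$ you get $\nabla V=0$ (since $\iota_V H=JF_V=0$), but $\iota_{JV}H=JF_{JV}=-J\rho_K^T\neq 0$, so $JV$ is \emph{not} Levi-Civita parallel and you cannot split off a $\mathbb C$-factor by de~Rham. In part~(3) you implicitly assume the leaf space is a smooth compact manifold to invoke the classification of positive K\"ahler--Einstein surfaces; the foliation need not be regular.
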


\begin{rmk} \ 
\begin{enumerate}
 \item In the case when the foliation $\mathcal V$ on $(M, J)$ is regular, i.e. the leaf space of $\mathcal V$ is a smooth complex surface $S=(M^T, J^T)$, Theorem~\ref{t:mainthm}-(2) yields that $S$ admits a K\"ahler metric of positive scalar curvature and that the first Chern number satisfies $c_1(S)^2\geq 0$. Using  Wu's formula  $c_1^2(S)= 2\chi(S) + 3\sigma(S)$ and  the fact that $b_+(S)=1$ for a K\"ahler surface admitting a K\"ahler metric with positive scalar curvature, we obtain the inequality  $b_2(S) \leq 10 - 4b_1(S)$. By Prop.~\ref{p:full-Betti} below,  at least in the cases when the soliton potential is constant or $b_1(M)=0$, we also obtain the inequality $h^{1,1}_{\rm BC}(M) \le 10$.

\item In \cite{grantcharov2008calabi} the authors construct both Hermitian metrics with vanishing Bismut-Ricci form (called there CYT metrics), and also pluriclosed metrics, on the manifolds $M_k=(k-1) (S^2 \times S^4) \# k (S^3 \times S^3), \, k\geq 1$.  The authors pose the question of whether a pluriclosed CYT metric (here BHE) exists on these manifolds. Their construction provides a regular holomorphic foliation $\mathcal{V}_k$ with regular leaf space a compact complex surface $S_k$ such that $b_2(S_k)=k+1$. Our results here show that $k \le 9$, should $M_k$ admit a BHE metric compatible with $\mathcal{V}_k$. We further obtain a classification of all  Gauduchon BHE structures in the case $k=1$.

    \item A related rigidity result with further hypotheses on the curvature recently appeared in (\cite{ivanov2023riemannian} Theorem 1.6)
    \item By \cite{ye2023bismut}, for $M$ compact the vanishing of the Bismut-Ricci form is implied by the vanishing of its $(1,1)$-component.
    \item It follows that all non-K\"ahler examples in Corollary \ref{c:rigiditycor} are covered by \emph{generalized K\"ahler manifolds}, by endowing the bi-invariant metric on the universal cover further with a right-invariant complex structure (cf. \cite{GualtieriThesis} Example 6.39).  However even in dimension two there are quotients of the standard flat Hopf surface which admit no compatible generalized K\"ahler structure \cite{ASU3}.
    \item In \cite{ASU3,Streetssolitons,SU1} a classification of the closely related generalized K\"ahler-Ricci solitons (GKRS) is given in complex dimension $n=2$.  A further important problem is to classify GKRS in dimension $n=3$, or pluriclosed solitons in dimensions $n=2,3$.
\end{enumerate}
\end{rmk}

To prove Theorem \ref{t:mainthm}, we first derive certain a priori symmetries which appear in all dimensions.  In \cite{JFS} the authors showed that every compact BHE manifold is a generalized Ricci soliton with potential function $f$, and moreover the vector fields $V = \theta^{\sharp} - \N f$ and $JV$ are real holomorphic Killing fields (cf. Proposition \ref{p:BHEprop} below).  These vector fields are nontrivial if the metric is not K\"ahler Calabi-Yau, in which case we obtain a reduction of the BHE equation in terms of the transverse geometry.  These reduced equations are closely linked to the Hull-Strominger system in mathematical physics \cite{Hull, StromingerSST}, and this symmetry reduction is related to recent works \cite{GFGMS, GMolina,Streetssolitons,SRYM2}.  We show in particular that the transverse Hermitian metric is always conformally balanced, and satisfies an anomaly cancellation equation.  Restricting now to the case $n = 3$, it follows that the transverse geometry is conformally K\"ahler, and furthermore satisfies a twisted Einstein-Maxwell equation (cf. \cite{apostolov2019conformally, hawking2023large,lebrun2015einstein, misner1973gravitation}) with respect to the associated principal curvatures.  Exploiting various special geometry of the four-dimensional transverse distribution, we derive items (1) and (2) of Theorem \ref{t:mainthm}.  Using the Bochner-Weitzenbock formulas together with this special geometry we derive various topological characteristics, including item (3) of Theorem \ref{t:mainthm}.  To establish Corollary \ref{c:rigiditycor}, we must obtain finer information from the associated anomaly cancellation equation.  In particular assuming the topological condition $h_{BC}^{1,1}(M) = 2$ it follows that $\ga^T$ and the primitive part of $\rho_K^T$ are proportional, and it follows that the eigenvalues of the transverse Ricci tensor are constant.  By adapting the argument of \cite{VTA} to our setting of a transverse K\"ahler geometry, it follows that the transverse Ricci tensor is parallel.  This in turn implies that the Bismut torsion on the total space is parallel, and then the proof concludes using the recent characterization of BHE manifolds with parallel Bismut torsion \cite{brienza2024cyt} (cf. also \cite{barbaro2024pluriclosed}) and the classification of Bismut-flat manifolds in low dimensions \cite{ZhengBismutflat}.  The other cases of Corollary \ref{c:rigiditycor} follow similarly.

\vskip 0.1in
\textbf{Acknowledgements:} We are very grateful to an anonymous referee for pointing out a crucial mistake in an earlier version of this paper.

\section{Dimension reduction} \label{s:dimred}

In this section we derive a toric reduction via a local ansatz for BHE manifolds, stemming from the a priori symmetry provided by the Lee vector field.  To begin we recall fundamental definitions and structural results for BHE metrics.  Fix $(M^{2n}, g, J)$ a Hermitian manifold, with K\"ahler form $\gw(X,Y) = g(JX, Y)$.  We say that the metric is pluriclosed if $\i\del\delb \gw = 0$.  Equivalently, introducing the real operator $d^c:= \sqrt{-1}(\bar{\partial} - \partial)$ we have $dd^c \omega=0$.  We set $H = - d^c \gw$ and then the pluriclosed condition is equivalent to $d H = 0$.  The associated Lee form is defined by $\theta = - d^* \gw \circ J$, and we say $(M^{2n},g,J)$ is balanced if $\theta=0$.  The associated Bismut connection $\nabla^B$ is defined by 
\begin{align*}
    \langle \nabla_X^{B}Y,Z \rangle=\langle \nabla_XY,Z \rangle- \tfrac{1}{2}d^c\omega(X,Y,Z) = \IP{\N_X Y, Z} + \tfrac{1}{2} H(X,Y,Z).
\end{align*}
and the Bismut Ricci form $\rho_B$ and Bismut Ricci tensor $\Rc^B$ are then defined by
\begin{align*}
    \rho_{B}(X,Y) := \tfrac{1}{2} \sum_{i=1}^{2n}\langle R^{B}(X,Y)Je_i,e_i\rangle,\qquad \Rc^B(X,Y) = \sum_{i=1}^{2n}\langle R^{B}(e_i,X) Y,e_i\rangle,
\end{align*}
for any orthonormal basis $\{e_i\}$, where $R^B$ is the Bismut Riemannian curvature.  

\subsection{Curvature identities}

We record some elementary computations which are useful in later sections (cf. \cite{GRFBook,Ivanovstring}).


\begin{lemma}
    Let $(M^{2n},g,J)$ be a Hermitian manifold. Then,
    \begin{align} 
        \theta =\tfrac{1}{2}\tr_{\omega}d\omega\label{2.2}
    \end{align}
    where for a $p$-form  $\psi$, we let $\tr_{\omega}\psi : = \sum_{i,j=1}^n\omega(e_i, e_j)\psi(e_i, e_j, \cdot, \ldots, \cdot)$ with respect to any orthonormal basis $\{e_i\}$.
\end{lemma}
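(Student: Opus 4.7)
The plan is to prove this pointwise by choosing a $J$-adapted orthonormal frame and reducing the trace to a computation involving $\nabla J$ and the codifferential.

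First I would fix a point $p \in M$ and a local orthonormal frame of the form $\{e_1, Je_1, \ldots, e_n, Je_n\}$. In this frame $\omega(e_a, e_b)$ vanishes unless $\{e_a, e_b\}$ is a $J$-related pair, and $\omega(e_i, Je_i) = 1$. The definition of $\tr_\omega$ therefore collapses to
\[
(\tr_\omega d\omega)(X) \;=\; 2\sum_{i=1}^n d\omega(e_i, Je_i, X).
\]
Next, using that the Levi-Civita connection is torsion-free, expand
\[
d\omega(e_i, Je_i, X) \;=\; (\nabla_{e_i}\omega)(Je_i, X) \,-\, (\nabla_{Je_i}\omega)(e_i, X) \,+\, (\nabla_X \omega)(e_i, Je_i).
\]

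The key algebraic ingredient is that $\nabla J$ anti-commutes with $J$: differentiating $J^2 = -\Id$ gives $(\nabla_Y J)J + J(\nabla_Y J) = 0$, from which one obtains
\[
(\nabla_Y \omega)(JA, JB) \;=\; -(\nabla_Y \omega)(A, B).
\]
Two consequences follow. First, the third term in the expansion above sums to $\mathrm{tr}((\nabla_X J)\circ J)$, which vanishes since $\nabla_X J$ anti-commutes with $J$ and the trace is cyclic. Second, applying the identity once to each of the remaining terms gives
\[
(\nabla_{e_i}\omega)(Je_i, X) = (\nabla_{e_i}\omega)(e_i, JX), \qquad -(\nabla_{Je_i}\omega)(e_i, X) = (\nabla_{Je_i}\omega)(Je_i, JX).
\]
Summing over $i$ and recognizing the right-hand side as the sum $\sum_a (\nabla_{f_a}\omega)(f_a, JX)$ over the full frame yields $-d^*\omega(JX) = \theta(X)$, which combined with the factor of $2$ from the initial step gives $\theta = \tfrac{1}{2}\tr_\omega d\omega$.

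There is no real obstacle here: the entire argument is pointwise linear algebra with a $J$-adapted frame together with the two universal identities satisfied by $\nabla J$ on any Hermitian (not necessarily K\"ahler) manifold. The only thing to be careful about is sign conventions in the definitions of $\theta$, $d^*$, and $\omega$.
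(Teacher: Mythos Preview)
Your proof is correct and is essentially the same as the paper's: both pick a $J$-adapted orthonormal frame, expand $d\omega$ in terms of $\nabla\omega$, and use the fact that $\nabla_Y\omega$ is of type $(2,0)+(0,2)$ (which you phrase equivalently as the anti-commutation of $\nabla_Y J$ with $J$, giving $(\nabla_Y\omega)(JA,JB)=-(\nabla_Y\omega)(A,B)$). The only cosmetic difference is direction---the paper starts from $\theta(X)=-(d^*\omega)(JX)$ and works toward $\tr_\omega d\omega$, while you start from $\tr_\omega d\omega$ and work toward $\theta$.
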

\begin{proof}

Fix a point $p$ and choose a $J$-adapted orthonormal basis $\{e_1,e_2,...,e_{2n}\}$ at $p$. From the definition of $\theta$ and using that $\nabla_{X} \omega = g (\nabla_{X} J)$ has type $(2,0)+(0,2)$, we compute
\begin{align*}
    \theta(X)=-(d^*\omega)(JX)= \sum_{m=1}^{2n}(\nabla_{e_{m}}\omega)(e_m,JX)=\sum_{m=1}^{2n}(\nabla_{e_m} \omega)(Je_m, X).
\end{align*}
Using
\[ d\omega(Y, Z, X) = (\nabla_Y \omega)(Z, X) + (\nabla_X \omega)(Y, Z) -(\nabla_Z \omega)(Y, X), \]
and again that $(\nabla_X\omega)$ is $(2,0)+(0,2)$,  we get 
\[ \sum_{m=1}^{2n} d\omega(e_m, Je_m, X)= 2\sum_{m=1}^{2n} (\nabla_{e_m} \omega)(Je_m, X),\]
and hence
\begin{align*}
\theta(X)=\tfrac{1}{2}\sum_{m=1}^{2n}(d\omega)(e_m, Je_m, X) = \tfrac{1}{2} \tr_{\omega} d\omega, \end{align*}
as claimed.
\end{proof}

\begin{lemma}[\cite{GRFBook}, Proposition 8.10]\label{Lrho}
    Let $(M^{2n},g,J)$ be a pluriclosed, Hermitian manifold and $H=-d^c\omega$. Then,
    \begin{align}
        \rho_B^{1,1}(\cdot,J\cdot)=\Rc-\tfrac{1}{4}H^2+\tfrac{1}{2}\mathcal{L}_{\theta^{\sharp}}g, \quad \rho_B^{2,0+0,2}(\cdot,J\cdot)=-\tfrac{1}{2}d^*H+\tfrac{1}{2}d \theta-\tfrac{1}{2}\iota_{\theta^{\sharp}}H,\label{2.1}
    \end{align}
    where $\Rc$ denotes the Ricci tensor of $g$, $\theta^{\sharp}:= g^{-1} \theta$ is the Lee vector field, and $H^2(X,Y) := \IP{i_X H, i_Y H}_g$.
\end{lemma}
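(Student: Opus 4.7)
The plan is to compute the Bismut curvature $R^B$ directly in terms of the Levi--Civita curvature $R$, the torsion 3-form $H$, and its covariant derivative, then trace against $J$ to isolate $\rho_B$ and decompose the result into $(1,1)$ and $(2,0)+(0,2)$ components.  Writing $\nabla^B_X Y = \nabla_X Y + \tfrac{1}{2}(g^{-1}H)(X,Y,\cdot)$, the standard connection-perturbation formula produces
\[ R^B(X,Y,Z,W) = R(X,Y,Z,W) + \tfrac{1}{2}\bigl((\nabla_X H)(Y,Z,W) - (\nabla_Y H)(X,Z,W)\bigr) + \tfrac{1}{4}\bigl(\langle i_Z i_X H, i_W i_Y H\rangle - \langle i_W i_X H, i_Z i_Y H\rangle\bigr). \]
Setting $(Z,W)=(Je_i,e_i)$ and summing over an orthonormal frame produces $\rho_B(X,Y)$ as a sum of three traces.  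Since $\rho_B^{1,1}(\cdot,J\cdot)$ is symmetric and $\rho_B^{2,0+0,2}(\cdot,J\cdot)$ is antisymmetric as bilinear forms in their two arguments, the identification reduces to separately extracting the symmetric and antisymmetric parts of this sum.

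Each of the three traces splits naturally.  The $R$-trace $\tfrac12\sum_i R(X,Y,Je_i,e_i)$ recovers the Riemannian Ricci tensor $\Rc(X,Y)$ symmetrically after invoking the first Bianchi identity and the structural fact that $\nabla J$ has type $(2,0)+(0,2)$, with all corrections from $\nabla J\neq 0$ expressible in terms of $H$ and $\theta$.  The quadratic-in-$H$ trace yields $-\tfrac14 H^2(X,Y)$ symmetrically and vanishes antisymmetrically, thanks to the skew-symmetry of $H$ and the identity $\sum_i H(X,Je_i,e_i) = -2\theta(X)$, a direct application of the preceding lemma to $H=-d^c\omega$.  The $\nabla H$-trace is the most delicate: using $dH=0$ to convert the alternating sum $(\nabla_X H)(Y,Je_i,e_i) - (\nabla_Y H)(X,Je_i,e_i)$ into a $d^*H$-type expression, while commuting $\nabla_X$ past $\sum_i H(Y,Je_i,e_i) = -2\theta(Y)$ extracts $\nabla_X\theta$ pieces.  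Separating by symmetry and invoking the Koszul identities $\mathcal{L}_{\theta^\sharp}g(X,Y) = (\nabla_X\theta)(Y) + (\nabla_Y\theta)(X)$ and $d\theta(X,Y) = (\nabla_X\theta)(Y) - (\nabla_Y\theta)(X)$ produces the remaining terms: $\tfrac{1}{2}\mathcal{L}_{\theta^\sharp}g$ symmetrically, and $-\tfrac12 d^*H + \tfrac12 d\theta - \tfrac12 \iota_{\theta^\sharp}H$ antisymmetrically.

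The main technical obstacle is the bookkeeping around $\nabla J\neq 0$: converting the $R$-trace to $\Rc(X,Y)$ requires carefully tracking corrections coming from $\nabla J$, each of which is quadratic in the torsion $H$ and is expressible via $\theta$.  These corrections interact with the $H^2$ and $\nabla H$ traces, and the verification of the final formulas is essentially a combinatorial reorganization of the three contributions, keeping separate track of the $(1,1)$ and $(2,0)+(0,2)$ projections.  Once the three traces are isolated and reassembled, the claimed identities for $\rho_B^{1,1}(\cdot,J\cdot)$ and $\rho_B^{2,0+0,2}(\cdot,J\cdot)$ follow.
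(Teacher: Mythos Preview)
The paper does not supply its own proof of this lemma; it simply records the statement and cites \cite{GRFBook}, Proposition~8.10.  Your outline is the standard computation carried out in that reference: expand $R^B$ via the connection-perturbation formula, trace against $J$ to produce $\rho_B$, and split into symmetric and antisymmetric parts using that $\rho_B^{1,1}(\cdot,J\cdot)$ is symmetric while $\rho_B^{2,0+0,2}(\cdot,J\cdot)$ is skew.

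One small caution on your bookkeeping: the attribution of individual terms to each of the three traces is not quite as clean as you suggest.  For instance, the contribution $-\tfrac{1}{2}\iota_{\theta^\sharp}H$ in the antisymmetric part naturally emerges from the quadratic-in-$H$ trace (via the identity $\sum_i H(\cdot,Je_i,e_i)=-2\theta$ you quote), not from the $\nabla H$ trace; and the passage from the $R$-trace $\tfrac{1}{2}\sum_i R(X,Y,Je_i,e_i)$ to $\Rc$ produces correction terms that interact nontrivially with the other two traces rather than being absorbed purely as ``$\nabla J$ corrections.''  None of this is a logical gap---the overall strategy is correct and the terms do assemble as claimed---but if you write out the argument in full you will find the reorganization is less modular than your sketch implies.
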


\subsection{Structure of Bismut Hermitian-Einstein manifolds}

In this subsection, we derive the a priori symmetries canonically associated to a BHE manifold.  To begin we review some basic structural results about these metrics and the closely related steady pluriclosed solitons.  For more details see \cite{GRFBook, lee2024stability}.

\begin{defn}\label{soliton}
We say that a pluriclosed manifold $(M^{2n},g,J)$ is a
\begin{enumerate}
    \item \emph{Bismut-Hermitian-Einstein manifold} (BHE) if $\rho_B = 0$.
    \item \emph{steady pluriclosed soliton} if there exists a smooth function $f$ such that 
\begin{align*}
   0=\Rc-\tfrac{1}{4}H^2+\nabla^2 f, \quad 0=d_g^*H+i_{\nabla f}H.
\end{align*}
    \item \emph{generalized Einstein manifold} if 
\begin{align*}
  0=\Rc-\tfrac{1}{4}H^2, \quad 0=d_g^*H \qquad \longleftrightarrow \qquad \Rc^B = 0
\end{align*}
\end{enumerate}
\end{defn}

\begin{lemma}[\cite{SU2} Proposition 4.1] \label{LV}
Let $(M^{2n},g,H,J,f)$ be a steady pluriclosed soliton with $H=-d^c\omega$. Define the vector field    
\begin{align*}
   { V=(\theta^\sharp-\nabla f)}.
\end{align*}
$V$ satisfies that 
\begin{align*}
    L_V J= L_{JV} J = 0,\quad L_{JV} g =0,\quad L_V\omega ={2}\rho_B^{1,1}.
\end{align*}
\end{lemma}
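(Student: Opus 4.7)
My plan is to deduce the four assertions from the two soliton equations in Definition~\ref{soliton}(2) together with the curvature identities of Lemma~\ref{Lrho}.

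First, I would establish $L_V g = \rho_B^{1,1}(\cdot, J\cdot)$. Substituting the soliton relation $\Rc - \tfrac{1}{4}H^2 = -\nabla^2 f$ into the first identity of Lemma~\ref{Lrho} and using $L_{\nabla f} g = 2 \nabla^2 f$ gives
\[
\rho_B^{1,1}(X, JY) = -\nabla^2 f(X,Y) + \tfrac{1}{2}(L_{\theta^\sharp}g)(X,Y) = \tfrac{1}{2}(L_{\theta^\sharp - \nabla f}g)(X,Y) = (L_V g)(X,Y).
\]
Since $\rho_B^{1,1}$ is of type $(1,1)$, the symmetric tensor $L_V g$ is $J$-invariant, so $(L_V g)(JX,Y) = \rho_B^{1,1}(JX,JY) = \rho_B^{1,1}(X,Y)$.

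The crux is $L_V J = 0$, which requires the second soliton equation. I would compute $L_V \omega$ by Cartan's formula $L_V \omega = d(\iota_V \omega) + \iota_V d\omega$, using $\iota_V \omega = (JV)^\flat$ to expand the exact term via $V = \tfrac{1}{2}(\theta^\sharp - \nabla f)$. For the interior-product term, I would exploit the $J$-twist relating $d\omega$ and $H$ (which follows from $H = -d^c\omega$) to rewrite $\iota_V d\omega$ in terms of $\iota_{JV} H$. Separately, substituting $d^*H = -\iota_{\nabla f} H$ from the second soliton equation into the second identity of Lemma~\ref{Lrho} yields
\[
\rho_B^{2,0+0,2}(\cdot, J\cdot) = \tfrac{1}{2} d\theta - \iota_V H.
\]
Combining these two computations should give the key identity $L_V\omega = \rho_B^{1,1}$. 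Then the general algebraic identity
\[
(L_V \omega)(X, Y) = (L_V g)(JX, Y) + g((L_V J)X, Y),
\]
combined with the first step, forces $g((L_V J)X, Y) \equiv 0$, i.e.\ $L_V J = 0$.

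The remaining assertions follow. Since $V$ is real holomorphic, $JV$ is automatically real holomorphic, giving $L_{JV} J = 0$. For the Killing property, a parallel Cartan-formula computation with $\iota_{JV} \omega = -V^\flat = -\tfrac{1}{2}(\theta - df)$ and the same $J$-twist relating $d\omega$ and $H$, together with the soliton equations, should yield $L_{JV} \omega = 0$. Combined with $L_{JV} J = 0$ via the algebraic identity $(L_{JV}\omega)(X,Y) = (L_{JV} g)(JX,Y) + g((L_{JV} J)X, Y)$, one concludes $L_{JV} g = 0$.

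The main obstacle is the identification $L_V\omega = \rho_B^{1,1}$ in the second step: one must carefully merge Cartan's formula, the $J$-twisted differentials $d$ and $d^c$, and both soliton equations. All other assertions are formal consequences of the identity $L_V g = \rho_B^{1,1}(\cdot, J\cdot)$ together with $L_V J = 0$.
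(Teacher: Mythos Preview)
The paper does not give its own proof of this lemma; it is simply quoted from \cite{SU2} (Proposition 4.1) and used as a black box. So there is no in-paper argument to compare against, and your outline stands on its own.

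Your strategy is the natural one and is essentially how the result is proved in \cite{SU2}: combine the first identity of Lemma~\ref{Lrho} with the first soliton equation to obtain $L_Vg=\rho_B^{1,1}(\cdot,J\cdot)$, then combine Cartan's formula with the second identity of Lemma~\ref{Lrho} and the second soliton equation to obtain $L_V\omega=\rho_B^{1,1}$, and read off $L_VJ=0$ from the algebraic identity $(L_V\omega)(X,Y)=(L_Vg)(JX,Y)+g((L_VJ)X,Y)$. The deduction $L_{JV}J=J(L_VJ)=0$ from integrability of $J$ is standard, and the argument for $L_{JV}g=0$ via $L_{JV}\omega=0$ is parallel.

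One caution: the step you flag as the ``main obstacle'' really is where all the content lies, and your write-up leaves it at the level of ``should give''. Concretely, you must track the action of $J$ on $3$-forms to pass between $\iota_Vd\omega$ and $\iota_{JV}H$ (equivalently, use that $d\omega$ has type $(2,1)+(1,2)$ so that $(d\omega)(X,Y,Z)=-H(JX,JY,JZ)$ with the paper's conventions), and then check that the exact term $d((JV)^\flat)=\tfrac12 d(J\theta)-\tfrac12 dd^cf$ combines with $\rho_B^{2,0+0,2}(\cdot,J\cdot)=\tfrac12 d\theta-\iota_VH$ to leave exactly $\rho_B^{1,1}$. This works, but the cancellation is sign-sensitive and depends on the convention $d^c=\sqrt{-1}(\bar\partial-\partial)$ used here; if you carry it out in full you should state the intermediate identity $\iota_Vd\omega(X,Y)=(\iota_{JV}H)(JX,JY)$ (or its equivalent) explicitly rather than leaving it implicit.
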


\begin{prop}[\cite{lee2024stability} Proposition 2.16]\label{P1}
Let $(M^{2n},g,H,J,f)$ be a steady pluriclosed soliton. Then one has
$$ \nabla^B(\theta-df)= \rho_B\circ J .$$
\end{prop}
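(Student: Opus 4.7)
The plan is to compute the $(0,2)$-tensor $\nabla^B(\theta-df)$ directly from the definition of the Bismut connection, split it into its symmetric and antisymmetric parts, and then match these against the decomposition of $\rho_B(\cdot,J\cdot)$ already worked out in Lemma~\ref{Lrho}. Throughout I read $\rho_B\circ J$ as the $(0,2)$-tensor $\rho_B(\cdot,J\cdot)$.

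The first step is to translate the contorsion identity $\langle\nabla^B_XY,Z\rangle = \langle\nabla_XY,Z\rangle + \tfrac{1}{2}H(X,Y,Z)$ to the dual side, giving $(\nabla^B_X\alpha)(Y) = (\nabla_X\alpha)(Y) - \tfrac{1}{2}H(X,Y,\alpha^\sharp)$ for any $1$-form $\alpha$. Plugging in $\alpha=\theta-df$ yields
\begin{align*}
(\nabla^B_X(\theta-df))(Y) = (\nabla_X\theta)(Y) - \nabla^2 f(X,Y) - \tfrac{1}{2}H(X,Y,\theta^\sharp-\nabla f).
\end{align*}
Since $H$ is totally skew and $\nabla^2 f$ is symmetric, the Hessian term lives in the symmetric part and the contorsion term lives in the antisymmetric part of this tensor.

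For the symmetric part, the identity $(\nabla_X\theta)(Y)+(\nabla_Y\theta)(X) = (\mathcal{L}_{\theta^\sharp}g)(X,Y)$ combined with the first steady soliton equation $\nabla^2 f = -\Rc + \tfrac{1}{4}H^2$ produces $\tfrac{1}{2}\mathcal{L}_{\theta^\sharp}g + \Rc - \tfrac{1}{4}H^2$, which by Lemma~\ref{Lrho} is exactly $\rho_B^{1,1}(\cdot,J\cdot)$. For the antisymmetric part, $d(df)=0$ and the total skew-symmetry of $H$ give $\tfrac{1}{2}d\theta - \tfrac{1}{2}\iota_{\theta^\sharp-\nabla f}H$; the second soliton equation $d^*H = -\iota_{\nabla f}H$ then rewrites this as $\tfrac{1}{2}d\theta - \tfrac{1}{2}\iota_{\theta^\sharp}H - \tfrac{1}{2}d^*H$, which by Lemma~\ref{Lrho} is precisely $\rho_B^{2,0+0,2}(\cdot,J\cdot)$. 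Adding the two pieces recovers $\rho_B(\cdot,J\cdot)$, as required.

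The computation is essentially a bookkeeping exercise, so the only real obstacle is convention tracking: one must fix the signs in the contorsion, make sure $H(X,Y,V)$ equals $(\iota_V H)(X,Y)$ rather than its negative, and check that the direction of composition in $\rho_B\circ J$ lines up with Lemma~\ref{Lrho}. Once those conventions are pinned down, the two steady-soliton equations of Definition~\ref{soliton} slot into the symmetric and antisymmetric decomposition in a one-to-one manner, producing exactly the two components of $\rho_B(\cdot,J\cdot)$.
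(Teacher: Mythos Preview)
Your proof is correct. The paper does not give its own argument here; it simply cites \cite{lee2024stability} Proposition~2.16 for the statement, so there is no in-paper proof to compare against. Your approach---expanding $\nabla^B(\theta-df)$ via the contorsion identity, splitting into symmetric and antisymmetric parts, and identifying each with the corresponding piece of $\rho_B(\cdot,J\cdot)$ via Lemma~\ref{Lrho} and the two soliton equations---is exactly the natural direct computation, and each step checks out with the paper's sign conventions (in particular $(\iota_V H)(X,Y)=H(V,X,Y)=H(X,Y,V)$ for a $3$-form, and $\rho_B^{1,1}(\cdot,J\cdot)$ is symmetric while $\rho_B^{2,0+0,2}(\cdot,J\cdot)$ is antisymmetric, so the type decomposition of $\rho_B$ really does line up with the symmetric/antisymmetric split).
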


We now record the main basic structural result for BHE manifolds.  Note that item (5) was previously proved in \cite[Proposition 2.6]{JFS}, relying on a result in \cite{ivanov2013vanishing}.  We give a direct proof here relying on soliton identities.

\begin{prop} \label{p:BHEprop} Let $(M^{2n}, g, J)$ be a compact Bismut-Hermitian-Einstein manifold.  Then the following hold:
\begin{enumerate}
    \item $(g, J)$ is a steady pluriclosed soliton with a unique normalized potential $f$;
    \item  $\nabla^BV=\nabla^BJV=0$;
    \item $\brs{V}^2 = \brs{J V}^2$ is constant;
    \item $L_V g = 0$;
    \item $V$ vanishes if and only if $g$ is K\"ahler.
\end{enumerate}
\begin{proof}
By (\cite[Proposition 2.6]{JFS}), it follows that any BHE manifold is a steady pluriclosed soliton.  Roughly speaking, the BHE equation implies that the pair $(g, H)$ is a generalized Ricci soliton, and then the monotonicity of the Perelman-type functional along generalized Ricci flow implies that in fact it is a gradient soliton.  Given this, by Proposition \ref{P1}, $\nabla^B V=0$. Since $\nabla^Bg=\nabla^BJ=0$, we also get that $\nabla^B J V = 0$ and that the norm of these vector fields is constant.
Moreover, being Bismut-parallel, $V$ is also Killing.  To show (5), recall that the pluriclosed assumption implies that (\cite[Equation 3.24]{Ivanovstring})
\begin{align*}
    |\theta|^2+d^*\theta-\tfrac{1}{6}|H|^2=0.
\end{align*}
We next use the Perelman $\gl$-invariant for a generalized Ricci soliton.  Recall that 
\begin{align*}
    \lambda(g,H)=\inf_{\{f| \int_Me^{-f}dV_g=1\}}\Big\{\int_M (R-\frac{1}{12}|H|^2+|\nabla f|^2)e^{-f}dV_g\Big\}.
\end{align*}
Note that the minimizer $f$ is uniquely achieved (cf. \cite{GRFBook} Ch. 6) and satisfies
\begin{align*}
    \lambda(g,H)=R-\frac{1}{12}|H|^2+2\gD f-|\nabla f|^2,
\end{align*}
where we use the convention $\Delta =-d^*d -dd^*$ for the Laplacian $\Delta$.  Since $(M,g,H)$ is a steady pluriclosed soliton, 
using the identities of (\cite[Proposition 4.33] {GRFBook}) we get
\begin{align*}
    \int_M \tfrac{1}{6}\brs{H}^2 e^{-f}dV_g =&\ \lambda(g,H)\\
    =&\ \tfrac{1}{6}|H|^2+\Delta f-|\nabla f|^2
    \\
    =&\ |\theta|^2+d^*\theta+\Delta f-|\nabla f|^2.
\end{align*}
Since $V$ is Killing we take the trace of $L_V g = 0$ to obtain
\begin{align*}
    d^*\theta+\Delta f=0.
\end{align*}
Thus, if $V = 0$ then $\theta=df$ and hence $H=0$.
\end{proof}
\end{prop}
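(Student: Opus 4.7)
My plan is to handle items (1)--(4) as essentially direct consequences of the previously stated structural results combined with the BHE condition $\rho_B = 0$, and then devote the bulk of the argument to the converse direction of item (5).

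For item (1), I would invoke \cite[Proposition 2.6]{JFS}: by Lemma \ref{Lrho}, the vanishing of $\rho_B$ gives both $\Rc - \tfrac{1}{4}H^2 + \tfrac{1}{2}\mathcal L_{\theta^\sharp} g = 0$ and $d^*H = d\theta - \iota_{\theta^\sharp}H$, which makes $(g,H)$ a generalized Ricci soliton in the vector-field sense; compactness together with monotonicity of Perelman's $\lambda$-functional along generalized Ricci flow then upgrades this to a gradient steady pluriclosed soliton with a unique normalized potential $f$. Given (1), item (2) is immediate from Proposition \ref{P1}: the identity $\nabla^B(\theta - df) = \rho_B \circ J = 0$ is precisely the statement that $\nabla^B V = 0$; since $\nabla^B J = 0$ we also get $\nabla^B(JV) = J \nabla^B V = 0$. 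Item (3) then follows since $\nabla^B g = 0$ forces Bismut-parallel vector fields to have constant norm, together with the unitarity identity $|JV|^2 = |V|^2$. Item (4) is a direct consequence of Lemma \ref{LV}: the identity $(L_V g) J = \rho_B^{1,1} = 0$ combined with invertibility of $J$ forces $L_V g = 0$.

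For item (5), the forward direction is immediate: K\"ahler means $H = 0$ and $\theta = 0$, so the BHE equation reduces to Ricci-flatness, forcing the normalized $f$ to be constant and hence $V = 0$. The converse is the substantive step. Assume $V = 0$, so that $\theta = df$ as one-forms, and the goal is to show $H \equiv 0$. My plan is to evaluate Perelman's $\lambda$-functional in two different ways. On one hand, the standard steady soliton computation gives the integrated identity $\lambda(g,H) = \int_M \tfrac{1}{6}|H|^2 e^{-f}\, dV_g$. On the other hand, the Euler--Lagrange equation at the minimizer gives the pointwise identity $\lambda = R - \tfrac{1}{12}|H|^2 + 2\Delta f - |\nabla f|^2$, and combining this with the Gauduchon--Ivanov pluriclosed identity $|\theta|^2 + d^*\theta - \tfrac{1}{6}|H|^2 = 0$ together with the Killing-trace identity $d^*\theta + \Delta f = 0$ (from tracing $L_V g = 0$ established in item (4)), the substitution $\theta = df$ collapses the pointwise expression to $\lambda \equiv 0$. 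The integrated identity and the nonnegativity of $|H|^2$ then force $H \equiv 0$, whence $\gw$ is K\"ahler.

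The main obstacle I anticipate lies entirely in (5): one must carefully assemble the Gauduchon--Ivanov pluriclosed identity, the steady-soliton Perelman evaluation, and the Killing-field trace identity so that they telescope to $\lambda = 0$ precisely when $\theta = df$. Items (1)--(4) are then essentially bookkeeping from the quoted propositions.
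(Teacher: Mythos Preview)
Your proposal is correct and follows essentially the same route as the paper's proof: items (1)--(3) are derived exactly as the paper does from \cite{JFS} and Proposition~\ref{P1}, and your argument for (5) via the Perelman $\lambda$-functional, the pluriclosed identity, and the Killing-trace relation $d^*\theta + \Delta f = 0$ matches the paper's computation step for step. The only cosmetic difference is that for (4) you appeal to Lemma~\ref{LV} (the identity $(L_V g)J = \rho_B^{1,1}$), whereas the paper deduces the Killing property directly from Bismut-parallelism of $V$; both are one-line observations.
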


\subsection{Invariant geometry}

In this subsection, we use the a priori symmetries guaranteed by Lemma \ref{LV} and Proposition \ref{p:BHEprop} to express the geometry locally in terms of invariant structures on a principal bundle.  Some related computations have appeared in the literature \cite{GFGM, GFGMS, GMolina, Streetssolitons}.  We assume throughout that $V \neq 0$, otherwise we would fall in the K\"ahler case by (5) in Proposition \ref{p:BHEprop} (see also \cite[Proposition 2.6]{JFS}).  In this case, it follows from Proposition \ref{p:BHEprop} that the vector fields $V$ and $JV$
are nontrivial Killing vector fields of constant length.  From now on we assume that all given BHE manifolds are scaled so that
\begin{align*}
    \brs{V}  = \brs{JV} \equiv 1.
\end{align*}
We define a distribution
\begin{align*}
\mathcal {V} = \spn \{V, JV\}.
\end{align*}
As $V$ and $JV$ are holomorphic Killing fields, $\mathcal {V}$ is integrable. 
Thus $M$ is locally equivalent to an open set in an $\mathbb R^2$-principal bundle with vertical space $\mathcal V$.  More abstractly, let $\mathfrak{t}^2$ denote the Lie algebra of $\mathbb R^2$. We will think of $V$ and $JV$ as the fundamental vector fields of a basis of $\mathfrak{t}^2$ (which we will still denote by $\{V, JV\}$) and we can define a connection $1$-form $\mu$
\begin{align}\label{eq:eta}
        \mu(X)=\eta(X)V+J\eta(X)JV, \qquad \eta:= \theta-df, \, \, J\eta:= -(\theta - df)\circ J.
 \end{align}
It follows easily from Proposition \ref{p:BHEprop} that this is indeed a principal connection, i.e.
\begin{align*}
    \mu(V) = V, \quad \mu(JV) = JV, \quad \mu(JX) = J_{\mathfrak t^2} \mu(X), \quad L_V \mu = L_{JV} \mu = 0.
\end{align*}
Thus, we define the horizontal distribution $\mathcal{H}=\ker\mu$.  It follows from the construction and standard facts about principal connections that
\begin{align*}
     [\mathcal{V},\mathcal{V}]\subset \mathcal{V}, \quad  [\mathcal{V},\mathcal{H}]\subset \mathcal{H},\quad J\mathcal{V}=\mathcal{V}, \quad J\mathcal{H}=\mathcal{H}.
\end{align*}
Furthermore, the distributions $\mathcal V$ and $\mathcal H$ are $g$-orthogonal, and we may decompose the K\"ahler form into the vertical and transverse pieces 
\begin{align*}
    \omega^V(X,Y)=&\ (\eta\wedge J\eta)(X,Y),\\
    \omega^T(X,Y)=&\ \omega(X,Y)-(\eta\wedge J\eta)(X,Y).
\end{align*}
We let $F:=F_{V} V +F_{JV} JV$ denote the principal curvature associated to $\mu$, i.e.  
\begin{align*}
    F_{V} : = d\eta = d \theta, \qquad F_{J V} : = d(J\eta) = d J \theta - dJdf,
\end{align*}
are closed forms supported on $\mathcal H$.  Lastly observe that the torsion of the Bismut connection is
\begin{equation}\label{eq: torsion form}
    H = H^T + J F_V\wedge \eta + JF_{JV}\wedge J\eta,
\end{equation}
where $H^T$ is the horizontal component of the torsion $3$-form, which amounts to $H^T=-Jd\omega^T$.

Using this setup we derive some key properties of the torsion and principal curvature.

\begin{lemma}\label{L3}
Let $(M^{2n},g,J,H)$ be a BHE manifold. Then $F_{V}$ and $F_{J V}$  are of type $(1,1)$ and are ${\mathcal V}$-basic.
\end{lemma}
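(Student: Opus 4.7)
The plan is to first identify $F_V = 2\iota_V H$ and $F_{JV} = 2\iota_{JV} H$, after which the $\mathcal{V}$-basic and $(1,1)$-type properties reduce to tractable identities involving $H$. Since $V$ is Bismut-parallel and Killing (Proposition~\ref{p:BHEprop}), combining $\nabla^B V = \nabla V + \tfrac{1}{2}g^{-1}H(\cdot, V, \cdot) = 0$ with the skew-symmetry of $\nabla V$ yields $dV^\flat = \iota_V H$; since $V = \tfrac{1}{2}(\theta^\sharp - \nabla f)$ gives $\eta = 2V^\flat$, we obtain $F_V = d\eta = 2\iota_V H$, and analogously $F_{JV} = 2\iota_{JV} H$ via $J\eta = 2(JV)^\flat$.

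The $\mathcal{V}$-basic property then follows by direct calculation. Horizontality of $F_V$ with respect to $V$ is automatic since $\iota_V F_V = 2\iota_V^2 H = 0$, while horizontality with respect to $JV$ becomes $\iota_{JV}F_V = L_{JV}\eta - d(\eta(JV)) = 0$, using $L_{JV}\eta = 0$ (as already noted in the paper) together with $\eta(JV) = g(2V, JV) = 0$ since $V \perp JV$. The analogous calculation handles $F_{JV}$, and invariance under $V$ and $JV$ is immediate from $L_V\eta = L_{JV}\eta = 0$.

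The main work is the $(1,1)$-type property. The key inputs are $L_V\omega = L_{JV}\omega = 0$ (from Lemma~\ref{LV} with $\rho_B^{1,1}=0$), the duality $\iota_V\omega = (JV)^\flat$, $\iota_{JV}\omega = -V^\flat$, and the Bismut-parallel identities $dV^\flat = \iota_V H$ and $d(JV)^\flat = \iota_{JV} H$. Cartan's formula then yields
\[
 \iota_V d\omega = -\iota_{JV}H, \qquad \iota_{JV}d\omega = \iota_V H.
\]
Contracting the universal Hermitian identity $d\omega + iH = 2\bar\partial\omega$ with $V^{1,0} = \tfrac{1}{2}(V - iJV)$ and substituting the above gives $\iota_{V^{1,0}}\bar\partial\omega = 0$, and by complex conjugation $\iota_{V^{0,1}}\partial\omega = 0$. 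Expanding $\iota_V H$ via $H = i\partial\omega - i\bar\partial\omega$ then shows that the $(2,0)$ and $(0,2)$ components cancel, so $F_V = 2\iota_V H$ is of pure type $(1,1)$; a parallel computation handles $F_{JV}$. The main obstacle is carrying out the type bookkeeping correctly in this last step, i.e., finding the right complex combination of the two Cartan identities so that the projection onto $V^{1,0}$ kills the anti-holomorphic part of $d\omega$.
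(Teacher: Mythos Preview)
Your argument is correct, and for the $\mathcal V$-basic claim it is essentially the paper's proof: both you and the paper use Cartan's formula and $L_V\eta = L_{JV}\eta = 0$ (the paper does not go through $F_V = 2\iota_V H$ for this part, but that is a cosmetic difference). One small caveat: once the metric is rescaled so that $|V|=1$ and $\eta(V)=1$, the relation becomes $\eta = V^\flat$ rather than $\eta = 2V^\flat$, so $F_V = \iota_V H$; this does not affect any of your conclusions.

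For the $(1,1)$-type, your route is genuinely different from the paper's. The paper stays at the level of connections: from $2\langle \nabla_{e_i}V,e_j\rangle = F_V(e_i,e_j)$ (Killing identity) and the torsion decomposition $H(e_i,V,e_j) = -JF_V(e_i,e_j)$, the equation $\nabla^B V = \nabla V + \tfrac12 \iota_V H = 0$ reads directly as $F_V - JF_V = 0$, i.e.\ $F_V$ is $J$-invariant. Your approach instead passes through Dolbeault types: you first identify $F_V$ with $\iota_V H$, then use $L_V\omega = L_{JV}\omega = 0$ and Cartan to obtain $\iota_{V^{1,0}}\bar\partial\omega = 0$, and read off that $\iota_V H = i\,\iota_{V^{1,0}}\partial\omega - i\,\iota_{V^{0,1}}\bar\partial\omega$ lies in $\Lambda^{1,1}$. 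The paper's argument is shorter and purely real, at the cost of relying on the torsion decomposition (equation \eqref{eq: torsion form}) whose form already anticipates the result. Your argument is more intrinsic: it isolates exactly which inputs are needed (Bismut-parallelism of $V$ to get $dV^\flat = \iota_V H$, and the Killing property of $V,JV$ for $\omega$), and the identity $\iota_{V^{1,0}}\bar\partial\omega = 0$ you derive is of independent interest.
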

\begin{proof}
    First of all, it holds
    $$\iota_V F_V = \iota_V d\eta = \mathcal{L}_V \eta - d\iota_V\eta = 0 ,$$
    and,
    $$\mathcal{L}_V F_V = \iota_V dF_V + d\iota_VF_V = 0 .$$
    The other combinations are similar, hence $F_V,F_{JV}$ are basic.
    Now using the Koszul formula, we get that
    \begin{align*}
        2\left<\nabla_{e_i}V,e_j\right> =&\  F_V(e_i,e_j),\\
        2\left<\nabla_{e_i}JV,e_j\right> =&\  F_{JV}(e_i,e_j).
    \end{align*}
    Since the difference between the Bismut and Levi Civita connection is given by half of the torsion $H$, we obtain
    \begin{align*}
        2\left<\nabla^B_{e_i}V,e_j\right> =&\  F_V(e_i,e_j) - JF_V(e_i,e_j),\\
        2\left<\nabla^B_{e_i}JV,e_j\right> =&\ F_{JV}(e_i,e_j) - JF_{JV}(e_i,e_j).
    \end{align*}
    The statement follows since the vectors $V$ and $JV$ are $\nabla^B$-parallel.
\end{proof}

\begin{rmk}\label{r:riem-fol} $(M, \mathcal{V})$ is an instance of a  \emph{Hermitian foliation}. Indeed, the  tensor 
\[g_T = g - \eta\otimes \eta - J\eta\otimes J\eta\]
is $\mathcal V$-basic and defines a Riemannian structure on the horizontal distribution  $\mathcal{H} \cong TM/\mathcal{V}$, i.e. $(M, \mathcal{V})$ is a Riemannian foliation (see e.g. \cite{Molino}). Similarly, the restriction of the complex structure $J$ to $\mathcal{H}$ gives rise to a transversal almost complex structure $J^T$ compatible with $g_T$. The integrability of $J^{T}$ (as a CR structure of co-dimension $2$) follows from the fact that $V, JV$ are $J$-holomorphic and $F_V$ and $F_{JV}$ are $\mathcal{V}$-basic of type $(1,1)$. Thus, $(g^T, J^T)$ defines a transversal Hermitian structure on the foliated manifold $(M, \mathcal{V})$.
\end{rmk}

\begin{lemma} \label{l:fbasic} Let $(M^{2n},g,J,H)$ be a compact BHE manifold. The soliton potential $f$ guaranteed by Proposition \ref{p:BHEprop} is basic. Furthermore, $f$ is constant if and only if $(\gw, J)$ is Gauduchon, i.e. $d^* \theta =0$.
\end{lemma}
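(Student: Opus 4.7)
The plan is to reduce both claims to two ingredients already available from the paper: (i) the uniqueness of the normalized soliton potential $f$ produced by Proposition~\ref{p:BHEprop}, and (ii) the identity $d^{*}\theta+\Delta f=0$ that was derived during the proof of that proposition.

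For basicness, my first step is to recall from Lemma~\ref{LV} and Proposition~\ref{p:BHEprop} that both $V$ and $JV$ are holomorphic Killing vector fields, so their flows $\Phi_t^{V}$ and $\Phi_t^{JV}$ preserve the full data $(g, J, \omega)$, and in particular preserve $H=-d^{c}\omega$ together with the volume form $dV_g$. The second step is to note that $f$ is uniquely characterized as the normalized minimizer of Perelman's $\lambda(g,H)$ functional subject to $\int_{M} e^{-f}\, dV_g = 1$ (uniqueness following from the principal-eigenfunction interpretation of this variational problem; cf.\ \cite{GRFBook} Ch.~6). Since each ingredient of the functional is preserved by the two flows, the pullbacks $f \circ \Phi_t^{V}$ and $f \circ \Phi_t^{JV}$ are again normalized minimizers, and therefore coincide with $f$. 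Differentiating at $t=0$ gives $V(f) = JV(f) = 0$, so $f$ is constant along the leaves of $\mathcal V = \spn\{V, JV\}$, i.e.\ $f$ is $\mathcal V$-basic.

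For the equivalence with the Gauduchon condition, I would directly apply
\[ d^{*}\theta + \Delta f = 0,\]
which, as noted in the proof of Proposition~\ref{p:BHEprop}, follows from tracing the Killing equation $L_{V} g = 0$ for $V=\tfrac{1}{2}(\theta^{\sharp}-\nabla f)$. If $f$ is constant, this gives $d^{*}\theta=0$, so $(\omega, J)$ is Gauduchon. Conversely, if $d^{*}\theta=0$, then $f$ is harmonic on the compact manifold $M$ and therefore constant. I do not anticipate any serious obstacle; the only delicate point is the appeal to uniqueness of the Perelman minimizer, which is precisely what converts the symmetry of $(g,H)$ under $V$ and $JV$ into the pointwise invariance $V(f)=JV(f)=0$.
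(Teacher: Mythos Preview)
Your proof is correct. The Gauduchon equivalence (part 2) is argued exactly as in the paper, via $d^{*}\theta+\Delta f=0$ and the maximum principle.

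For basicness (part 1) you take a genuinely different route from the paper. The paper traces the soliton equation to obtain $R-\tfrac{1}{4}|H|^2+\Delta f=0$; since $R$ and $|H|^2$ are invariant under the isometric flow of $V$, this yields $\Delta(L_Vf)=L_V\Delta f=0$, and the maximum principle on the compact $M$ forces $L_Vf$ to be constant, hence zero (similarly for $JV$). You instead invoke the uniqueness of the normalized Perelman minimizer: since the flows $\Phi_t^V,\Phi_t^{JV}$ preserve all the data $(g,H,dV_g)$ entering the $\lambda$-functional and its constraint, the pullbacks $f\circ\Phi_t^V$ and $f\circ\Phi_t^{JV}$ are again normalized minimizers and therefore equal to $f$.

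Both arguments are short and sound. The paper's approach is marginally more elementary in that it uses only the Euler--Lagrange equation satisfied by $f$, never the global uniqueness statement. Your approach is more conceptual: it makes explicit the general principle that any symmetry of $(g,H)$ is automatically a symmetry of the soliton potential, which would apply verbatim in related settings (e.g.\ other isometries, or other uniquely-minimized functionals).
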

\begin{proof}
    Tracing the soliton equation yields $R - \tfrac{1}{4} \brs{H}^2 + \gD f = 0$.  Since the metric and $H$ are both invariant under $V$, it follows that 
    $$0 = L_V(R - \tfrac{1}{4} \brs{H}^2 + \gD f) = L_V \Delta f = \Delta L_V f.$$
    Since $M$ is compact, it follows from the maximum principle that $L_V f$ is constant, hence zero using again that $M$ is compact.  The same argument applies for $JV$.

    For the second part, as 
    $\eta = \theta - df$ is the $g$-dual of a Killing field $V$, we have
    \[ 0=d^*\eta = d^* \theta - \Delta f. \]
    We conclude again my the maximum principle that $f=const$ iff $d^* \theta =0$.
\end{proof}

\subsection{Reduction of BHE equation} \label{sec: EM soliton}

\emph{ From now on, we assume that $(g,J)$ is  a  {non-K\"ahler} BHE metric  normalized by scale so that the vector fields $V, JV$ are unitary}. We  denote by $(g^T, J^T)$ the transversal Hermitian structure on $\mathcal{H}$ defined in the previous subsection.  We next describe the equations of the BHE system in terms of this transversal structure.   We emphasize that almost the entire discussion is local, and we eventually show a complete local equivalence between the reduced geometric structures and the BHE structure on the total space.  However, since ultimately we are concerned with compact examples, in view of Lemma \ref{l:fbasic}, we will \emph{assume throughout that $f$ is basic}.

\begin{prop}\label{P3}
    Let $(M^{2n},g,J,H)$ be a BHE manifold.   Then one has
    \begin{enumerate}
        \item $\theta_{g^T} = df$,
        \item $\tr_{\omega^T}F_V=0,\ \tr_{\omega^T}F_{JV}= -2$,
        \item $dd^c\omega^T-F_{V}\wedge F_{V}-F_{J V}\wedge F_{J V}=0$.
    \end{enumerate}
\end{prop}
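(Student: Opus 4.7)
The approach is direct computation, using the decomposition $\omega = \omega^T + \eta\wedge J\eta$ and the fact that $F_V, F_{JV}$ are closed, basic $(1,1)$-forms (Lemma~\ref{L3}). Differentiating once gives
\[
d\omega = d\omega^T + F_V \wedge J\eta - \eta \wedge F_{JV}.
\]
Parts (1) and (2) will then follow by evaluating the identity $\theta = \tfrac{1}{2}\tr_\omega d\omega = \sum_a d\omega(e_a, Je_a, \cdot)$, in an adapted orthonormal frame $\{V, JV, e_2, Je_2, \ldots, e_n, Je_n\}$ with $e_2, \ldots, Je_n$ horizontal, on the three kinds of argument $X\in\mathcal{H}$, $X=V$, and $X=JV$. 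The key bookkeeping facts are: $\eta, J\eta$ vanish on $\mathcal{H} = \ker\mu$ by construction of the connection; $\iota_V F_V = \iota_{JV}F_V = \iota_V F_{JV} = \iota_{JV}F_{JV} = 0$ by basic-ness of the principal curvatures; and the normalizations $\eta(V), J\eta(JV)$ are nonzero constants determined by $\mu(V) = V$, $\mu(JV) = JV$, while $\eta(JV) = J\eta(V) = 0$.

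For (1), plugging horizontal $X$ into the trace, basic-ness eliminates $F_V, F_{JV}$ against $V, JV$ and horizontality kills $\eta(X), J\eta(X)$, so the cross terms drop and $\theta(X) = \theta_{g^T}(X)$. Since $\eta = \theta - df$ vanishes on $\mathcal{H}$ by the very definition of the horizontal distribution, and $f$ is basic (Lemma~\ref{l:fbasic}), one has $\theta|_{\mathcal{H}} = df$, giving (1). For (2), the same expansion evaluated on $V$ and $JV$ collapses by the same mechanism to a single surviving term in each sum, yielding relations of the form $\theta(V) \propto \tr_{\omega^T}F_{JV}$ and $\theta(JV) \propto \tr_{\omega^T}F_V$ with explicit proportionality constants coming from $\eta(V)$ and $J\eta(JV)$. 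Comparing to the direct values $\theta(V) = \eta(V) + V(f) = \eta(V)$ and $\theta(JV) = \eta(JV) + JV(f) = 0$ (both using $f$ basic, so $V(f) = JV(f) = 0$) reads off the trace values $\tr_{\omega^T}F_V = 0$ and $\tr_{\omega^T}F_{JV} = -2$ after inserting the chosen normalization of $V, JV$.

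Part (3) is the step where I expect the main obstacle, since it requires consistent handling of the $J$-action on basic forms of various degrees. My plan is to differentiate the torsion decomposition (\ref{eq: torsion form}) and apply the pluriclosed condition $dH = 0$. Using $d\eta = F_V$, $dJ\eta = F_{JV}$, and that $F_V, F_{JV}$ are closed basic $(1,1)$-forms (so $JF_V = F_V$ and $JF_{JV} = F_{JV}$ as basic $(1,1)$ forms), the computation reduces to $dH = dH^T - F_V\wedge F_V - F_{JV}\wedge F_{JV}$. Combined with the identification $H^T = -Jd\omega^T = d^c\omega^T$ on basic $(1,1)$-forms, so that $dH^T = dd^c\omega^T$, the pluriclosed condition $dH = 0$ becomes exactly (3). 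Once the sign conventions in this last calculation are nailed down, all three parts reduce to formal manipulations of the decompositions above.
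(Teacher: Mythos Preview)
Your approach is correct and is essentially the paper's proof: for (1)--(2) the paper packages your case analysis into the single identity $\theta = \theta_{g^T} + \tfrac{1}{2}(\tr_{\omega^T}F_V)\,J\eta - \tfrac{1}{2}(\tr_{\omega^T}F_{JV})\,\eta$ and then reads off the horizontal, $\eta$- and $J\eta$-components of $\eta = \theta - df$, while for (3) it differentiates $d^c\omega$ directly rather than $H = -d^c\omega$. Your caution on signs in (3) is warranted --- with $JF_V = F_V$ one actually gets $dH = dH^T + F_V\wedge F_V + F_{JV}\wedge F_{JV}$ (plus, not minus), and correspondingly $H^T = -d^c\omega^T$ rather than $+d^c\omega^T$ --- but these two corrections cancel and the argument goes through as you outline.
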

\begin{proof}
    First, we compute the Lee form by (\ref{2.2}). 
    \begin{align*}
        \theta&=\tfrac{1}{2}\tr_{\omega}d\omega = \tfrac{1}{2}\tr_{\omega}\big(F_V\wedge J\eta - \eta\wedge F_{JV}+d\omega^T \big).
    \end{align*}
    As $F$ is supported on $\mathcal{H}$ (see Lemma~\ref{L3}), it follows that
    \begin{align*}
        \theta&=\tfrac{1}{2}\tr_{\omega}\big(F_V\wedge J\eta - \eta\wedge F_{JV}+d\omega^T \big) = \theta_{g^T}+\tfrac{1}{2}(\tr_{\omega^T}F_V) J\eta - \tfrac{1}{2}(\tr_{\omega^T}F_{JV}) \eta.
    \end{align*}
    Subtracting $df$ from both sides we get
    $$ \eta = \theta-df = (\theta_{g^T} - df)+\tfrac{1}{2}(\tr_{\omega^T}F_V) J\eta - \tfrac{1}{2}(\tr_{\omega^T}F_{JV}) \eta. $$
    As $\eta$ and $J\eta$ are orthogonal, and both are orthogonal to the horizontal distribution, and $df$ is horizontal, it follows that
    \begin{align*}
        \theta_{g^T}=df,\quad \tr_{\omega^T}F_V=0,\quad \tr_{\omega^T}F_{JV}=-2,
    \end{align*}
    as claimed in the first two formulas.  Next, we differentiate $\gw = \gw^T + \gw^V$ and use \Cref{L3} to yield
    \begin{align*}
        d\omega&=d\omega^T + F_V\wedge J\eta - \eta\wedge F_{JV},
        \\ d^c\omega&=d^c\omega^T - F_V\wedge \eta - J\eta\wedge F_{JV},
        \\ dd^c\omega&=dd^c\omega^T - F_V\wedge F_V - F_{JV}\wedge F_{JV}.
    \end{align*}
    As $\gw$ is pluriclosed, the final claimed formula follows.
\end{proof}

\begin{rmk} The first equation above says that the transverse Hermitian metric is conformally balanced. 
The second is equivalent to the statement that the connection $\mu$ is Hermitian-Yang-Mills with prescribed central element. 
The final equation is referred to as the Bianchi identity or anomaly cancellation equation due to its appearance in the physical Hull-Strominger system \cite{Hull,StromingerSST}.
\end{rmk}

Next, we show that the transverse geometry satisfies the {\em Einstein-Maxwell soliton equations}.  This is the result of a series of identities for the Bismut curvature.  We fix convenient notation for calculations.
We will use an adapted frame $\{e_1,\ldots,e_{2n-2},V,JV\}$ guaranteed by Lemma \ref{l:frame}.  We will use lowercase Roman letters to refer to the vectors $\{e_i\}$ and Greek letters $e_{\alpha}=V$, ${e_\beta=JV}$.  Furthermore, $F_\alpha=F_V$ and $F_\beta=F_{JV}$.
Finally, we use the upper Roman letters to indicate a general element of the overall frame.  

\begin{lemma}\label{Ch}
    Let $(M^{2n},g,J,H)$ be a BHE manifold.  The nonvanishing Christoffel symbols of the Levi-Civita and Bismut connections are
    \begin{gather} \label{CS2}
    \begin{split}
    \Gamma^g_{ijk}=\Gamma^{g^T}_{ijk},\quad &\Gamma^g_{ij\alpha}= - \tfrac{1}{2}F_{ij\alpha}, \quad 
        \Gamma^g_{\alpha i j}=\Gamma^g_{i\alpha j} = \tfrac{1}{2}F_{ij\alpha}, \quad \Gamma^g_{ij\beta}= - \tfrac{1}{2}F_{ij\beta}, \quad 
        \Gamma^g_{\beta i j}=\Gamma^g_{i\beta j} = \tfrac{1}{2}F_{ij\beta}.\nonumber
    \end{split}
    \end{gather}
    \begin{align}
     \Gamma^B_{ijk}=\Gamma^{g^T}_{ijk} + \tfrac{1}{2}H^T_{ijk}, \quad \Gamma^B_{\alpha ij}=F_{ij\alpha},\quad \Gamma^B_{\beta ij}=F_{ij\beta}.\label{CS}
    \end{align}
\end{lemma}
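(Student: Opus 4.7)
The plan is to apply Koszul's formula in a carefully chosen local frame adapted to the principal structure. First I would pick, on a sufficiently small open set, a horizontal frame $\{e_1,\ldots,e_{2n-2}\}$ invariant under the flows of both $V$ and $JV$; such a frame exists because $V$ and $JV$ are commuting Killing fields (that $[V,JV]=0$ follows from $L_V J = 0$ applied to $V$) whose flows preserve $\mathcal{H}$ by $[\mathcal{V},\mathcal{H}]\subset \mathcal{H}$. In this frame, every bracket of frame vectors vanishes except $[e_i,e_j]$, whose vertical part is
\[ \mu([e_i,e_j]) = -F_V(e_i,e_j)\,V - F_{JV}(e_i,e_j)\,JV, \]
by Cartan's formula together with $\eta(e_i)=J\eta(e_i)=0$. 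Moreover every inner product $g(e_A, e_B)$ of frame vectors is constant along $V$ and $JV$.

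With this setup the Koszul formula
\[ 2\langle \nabla^g_X Y, Z\rangle = X\langle Y,Z\rangle + Y\langle X,Z\rangle - Z\langle X,Y\rangle + \langle [X,Y],Z\rangle - \langle [X,Z],Y\rangle - \langle [Y,Z],X\rangle \]
collapses in each case. For a horizontal triple, every derivative and bracket term matches the analogous term in the Koszul formula for $g^T$ on the base (vertical parts of $[e_i,e_j]$ pair trivially with horizontal $e_k$), producing $\Gamma^g_{ijk}=\Gamma^{g^T}_{ijk}$. For mixed triples with exactly one vertical entry, only a single bracket term survives, namely $\pm\langle [e_i,e_j],V\rangle = \mp F_V(e_i,e_j)$ (or its $JV$-analogue); this reproduces the computation already performed in the proof of Lemma \ref{L3}, giving $\Gamma^g_{i\alpha j}=\tfrac{1}{2}F_{ij\alpha}$, and the Killing identity $\langle \nabla^g_{e_i}e_j, V\rangle = -\langle e_j, \nabla^g_{e_i} V\rangle$ then yields $\Gamma^g_{ij\alpha}=-\tfrac{1}{2}F_{ij\alpha}$, with the analogous formulas for $\beta$. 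Triples with two or three vertical entries vanish because $|V|$, $|JV|$, and $\langle V,JV\rangle$ are all constant and $[V,JV]=0$.

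The Bismut formulas then follow by adding $\tfrac{1}{2}H$ and invoking the decomposition $H = H^T + JF_V\wedge \eta + JF_{JV}\wedge J\eta$ from (\ref{eq: torsion form}). In the horizontal case, only $H^T$ contributes, yielding $\Gamma^B_{ijk}=\Gamma^{g^T}_{ijk}+\tfrac{1}{2}H^T_{ijk}$. In the mixed case, $H^T$ vanishes on a vertical slot, while $(JF_V\wedge \eta)(V,e_i,e_j) = JF_V(e_i,e_j)\,\eta(V)$; using $\eta(V)=1$, $J\eta(V)=0$ together with the $(1,1)$-type of $F_V$ from Lemma \ref{L3}, this becomes $F_V(e_i,e_j)=F_{ij\alpha}$, so that $\Gamma^B_{\alpha ij} = \tfrac{1}{2}F_{ij\alpha}+\tfrac{1}{2}F_{ij\alpha} = F_{ij\alpha}$, and identically for $\beta$. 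The main obstacle here is not conceptual but notational: one must carefully track sign conventions and the normalization of $\eta$ on the vertical vectors so that the Levi-Civita and torsion contributions combine constructively in the mixed components rather than cancelling.
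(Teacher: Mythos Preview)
Your proposal is correct and follows essentially the same route as the paper: apply the Koszul formula in an adapted frame with $[e_i,V]=[e_i,JV]=0$ (the paper silently invokes the frame of Lemma~\ref{l:frame} here), then add $\tfrac{1}{2}H$ using the torsion decomposition~\eqref{eq: torsion form}. Your write-up is actually more explicit than the paper's, which simply records ``a direct application of the Koszul formula yields\ldots'' and then lists the components of $H$; the only minor difference is that you allow $[e_i,e_j]$ to have a horizontal part (correctly observing it contributes identically to both $\Gamma^g$ and $\Gamma^{g^T}$), whereas the paper's chosen frame has $[e_i,e_j]$ purely vertical.
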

\begin{proof} 
    A direct application of the Koszul formula 
    yields
    \begin{align*}
        \Gamma^g_{ijk}=\Gamma^{g^T}_{ijk},\quad \Gamma^g_{ij\alpha}= - \tfrac{1}{2}F_{ij\alpha}, \quad 
        \Gamma^g_{i\alpha j}= \Gamma^g_{\alpha i j} = \tfrac{1}{2}F_{ij\alpha}.
    \end{align*}
    To obtain the Christoffel symbols of the Bismut connection, we must add the torsion $3$-form $H$ since $\Gamma^B_{ABC}=\Gamma^g_{ABC}+\tfrac{1}{2}H_{ABC}$.
    From \eqref{eq: torsion form}, the non-vanishing components of $H$ are
    \begin{align*} 
        H_{ijk}=H^T_{ijk},\quad H_{ij\alpha}=F_{ij\alpha},
    \end{align*}
    and so the lemma follows (the computations for the $\gb$ components are equal).
\end{proof}

Next we compute the coefficients of the Bismut curvature tensor.
\begin{lemma}\label{L4}
    Let $(M^{2n},g,J,H)$ be a BHE manifold.  The nonvanishing components of the Bismut curvature tensor are
    \begin{enumerate}
        \item $R^B_{ijkl}=R^{g^T}_{ijkl}+F_{ij\alpha}F_{kl\alpha}+F_{ij\beta}F_{kl\beta}+\tfrac{1}{2}\big( (\nabla_iH^T)_{jkl}-(\nabla_jH^T)_{ikl}+\tfrac{1}{2}H^T_{jkm}H^T_{iml}-\tfrac{1}{2}H^T_{ikm}H^T_{jml} \big).$ 
        \item $R^B_{\alpha\beta ij}=F_{ik\alpha}F_{jk\beta}-F_{ik\beta}F_{jk\alpha}$.
        \item $R^B_{\alpha ijk}=-(\nabla_i^{B}F_\alpha)_{jk}$, $R^B_{\beta ijk}=-(\nabla^{B}_iF_\beta)_{jk}$.
    \end{enumerate}
\end{lemma}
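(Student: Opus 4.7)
The proof will be a direct computation from the Christoffel symbols recorded in Lemma \ref{Ch}, using the adapted frame $\{e_1, \dots, e_{2n-2}, V, JV\}$ and the standard formula $R^B(X, Y)Z = \nabla^B_X\nabla^B_Y Z - \nabla^B_Y\nabla^B_X Z - \nabla^B_{[X, Y]}Z$. The essential preliminary is to record the Lie brackets in this frame: since $V$ and $JV$ are commuting Killing fields generating a $\mathfrak t^2$-action, $[V, JV] = 0$, and since $L_V\mu = L_{JV}\mu = 0$ by construction of the principal connection, both $[V, e_i]$ and $[JV, e_i]$ are horizontal; for two horizontal vectors, the vertical projection of the bracket is governed by the curvature of $\mu$, i.e. $[e_i, e_j]^{\mathcal V} = -F_{ij\alpha}V - F_{ij\beta}JV$. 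Together with the fact that $\nabla^B V = \nabla^B JV = 0$ (Proposition \ref{p:BHEprop}) and that $F_V, F_{JV}$ are $\mathcal V$-basic (Lemma \ref{L3}), these identities are the only input needed.

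For items (2) and (3) the parallelism $\nabla^B V = \nabla^B JV = 0$ forces several terms in the curvature formula to drop. For (2), the vanishing $[V,JV]=0$ leaves $R^B(V,JV)e_i = \nabla^B_V\nabla^B_{JV}e_i - \nabla^B_{JV}\nabla^B_V e_i$; substituting $\nabla^B_V e_i = F_{ij\alpha}e_j$ and $\nabla^B_{JV}e_i = F_{ij\beta}e_j$ and using the basic-ness of the $F$-components to kill vertical derivatives, the stated quadratic expression follows. For (3), one obtains $R^B(V,e_i)e_j = -\nabla^B_V\nabla^B_{e_i}e_j - \nabla^B_{[V,e_i]}e_j$, and the combination $\nabla^B_{e_i}(F_{jk\alpha}e_k)$ together with the horizontal correction $[V,e_i]$ (which accounts for the difference between Lie and $\nabla^B$ derivatives, again using basic-ness and $\nabla^B V=0$) reassembles precisely into $-(\nabla^B_{e_i}F_\alpha)_{jk}$.

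For item (1), the computation is the Bismut analog of the O'Neill formula for a Riemannian submersion. The horizontal-horizontal contribution $\nabla^B_{e_i}\nabla^B_{e_j}e_l - \nabla^B_{e_j}\nabla^B_{e_i}e_l - \nabla^B_{[e_i,e_j]^{\mathcal H}}e_l$ is by construction the curvature of the transverse metric connection with Christoffels $\Gamma^{g^T} + \tfrac12 H^T$, and the standard expansion for the curvature of a metric connection with torsion $H^T$ yields $R^{g^T}_{ijkl}$ plus the $\nabla H^T$ and $H^T \cdot H^T$ terms recorded in the statement. The remaining vertical contribution comes from $-\nabla^B_{[e_i,e_j]^{\mathcal V}}e_l = F_{ij\alpha}F_{lk\alpha}e_k + F_{ij\beta}F_{lk\beta}e_k$, which after pairing with $e_k$ and using antisymmetry of $F$ in its horizontal slots produces the $F\cdot F$ terms in the claim. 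I expect the main obstacle to be bookkeeping rather than conceptual: reconciling the sign conventions for the principal curvature $F$ with those for the vertical component of $[e_i,e_j]$, keeping the index positions in $F_{ij\alpha}$ straight, and correctly invoking the standard expansion of a Bismut-type curvature in terms of the Levi-Civita curvature, $\nabla H^T$, and $H^T\cdot H^T$.
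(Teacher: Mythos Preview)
Your approach is correct and is essentially the paper's own: a direct computation of $R^B$ from the Christoffel symbols of Lemma~\ref{Ch} in the adapted frame, using $\nabla^B V=\nabla^B JV=0$, the basic-ness of $F$, and the bracket relations of Lemma~\ref{l:frame}. Two bookkeeping slips to fix when you write it out: in item~(3) your displayed formula drops the term $-\nabla^B_{e_i}\nabla^B_V e_j=-\nabla^B_{e_i}(F_{jk\alpha}e_k)$ (and the sign on the first term is off), and the ``horizontal correction $[V,e_i]$'' you invoke is actually zero in the adapted frame by Lemma~\ref{l:frame}(3); the surviving contributions are exactly $-e_i(F_{jk\alpha})+\Gamma^B_{ijl}F_{lk\alpha}-\Gamma^B_{\alpha jl}\Gamma^B_{ilk}=-(\nabla^B_i F_\alpha)_{jk}$, which is how the paper organizes it.
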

\begin{proof}
    Since $V$ and $JV$ are Bismut-parallel, all cases except (1)-(3) are $0$.
    Now, recall that from the definition, the Bismut Riemannian curvature is given by 
    \begin{align*}
    R^B(e_A,e_B,e_C,e_D)=e_A(\Gamma^B_{BCD})-e_B(\Gamma^B_{ACD})+\Gamma^B_{BCE}\Gamma^B_{AED}-\Gamma^B_{ACE}\Gamma^B_{BED}-[e_A,e_B]^E\Gamma^B_{ECD}.
    \end{align*}
    Using (\ref{CS}), we see that 
    \begin{align*}
        R^B_{ijkl}&=e_i(\Gamma^B_{jkl})-e_j(\Gamma^B_{ikl})+\Gamma^B_{jkE}\Gamma^B_{iEl}-\Gamma^B_{ikE}\Gamma^B_{jEl}-[e_i,e_j]^E\Gamma^B_{Ekl}
        \\&=e_i(\Gamma^{g^T}_{jkl} 
        + \tfrac{1}{2}H^T_{jkl})-e_j(\Gamma^{g^T}_{ikl} 
        + \tfrac{1}{2}H^T_{ikl})
        +\Gamma^B_{jkE}\Gamma^B_{iEl}-\Gamma^B_{ikE}\Gamma^B_{jEl}-[e_i,e_j]^E\Gamma^B_{Ekl}
        \\&= R^{g^T}_{ijkl}+F_{ij\alpha}F_{kl\alpha}+F_{ij\beta}F_{kl\beta}+\tfrac{1}{4}H^T_{jkm}H^T_{iml}-\tfrac{1}{4}H^T_{ikm}H^T_{jml}
        \\&\kern2em+\tfrac{1}{2}\big(e_i(H^T_{jkl})-e_j(H^T_{ikl})+\Gamma^{g^T}_{jkm}H^T_{iml}+\Gamma^{g^T}_{iml}H^T_{jkm}-\Gamma^{g^T}_{ikm}H^T_{jml}-\Gamma^{g^T}_{jml}H^T_{ikm} \big)
        \\&=R^{g^T}_{ijkl}+F_{ij\alpha}F_{kl\alpha}+F_{ij\beta}F_{kl\beta}+\tfrac{1}{2}\big( (\nabla_iH^T)_{jkl}-(\nabla_jH^T)_{ikl}+\tfrac{1}{2}H^T_{jkm}H^T_{iml}-\tfrac{1}{2}H^T_{ikm}H^T_{jml} \big) .
    \end{align*}
    Now, recalling that $F_\alpha$ is a closed form, we compute
    \begin{align*}
        e_\beta\big( F_{\alpha ij} \big)&=(\nabla_\beta F_\alpha)_{ij}+\Gamma^g_{\beta  ik}F_{\alpha kj}+\Gamma^g_{\beta  jk}F_{\alpha ik}
        \\&=-(\nabla_i F_\alpha)_{j\beta }-(\nabla_j F_\alpha)_{\beta  i}+\Gamma^g_{\beta  ik}F_{\alpha kj}+\Gamma^g_{\beta  jk}F_{\alpha ik}
        \\&=(\Gamma^g_{i\beta  k}-\Gamma^g_{\beta  ik})F_{\alpha jk}+(\Gamma^g_{\beta  jk}-\Gamma^g_{j\beta  k})F_{\alpha ik}
        \\&=0,
    \end{align*}
    by (\ref{CS2}). Similarly, we can deduce that $e_\alpha(\Gamma^B_{\beta ij})=e_\alpha(\Gamma^B_{\alpha ij})=e_\beta(\Gamma^B_{\alpha ij})=
    e_\beta(\Gamma^B_{\beta ij})=0$.  Therefore,
    \begin{align*}
        R^B_{\alpha \beta ij }&=e_{\alpha}(\Gamma^B_{\beta ij})-e_{\beta}(\Gamma^B_{\alpha ij})+\Gamma^B_{\beta iE}\Gamma^B_{\alpha Ej}-\Gamma^B_{\alpha iE}\Gamma^B_{\beta Ej}-[e_\alpha,e_\beta]^E \Gamma^B_{Eij}
        \\&=F_{ik\alpha}F_{jk\beta}-F_{ik\beta}F_{jk\alpha}.
    \end{align*}
    Next, 
    \begin{align*}
        R^B_{\alpha ijk}&=e_{\alpha}(\Gamma^B_{ijk})-e_{i}(\Gamma^B_{\alpha jk})+\Gamma^B_{ijE}\Gamma^B_{\alpha Ek}-\Gamma^B_{\alpha jE}\Gamma^B_{i Ek}-[e_\alpha,e_i]^E \Gamma^B_{Ejk}
        \\&=-e_{i}(\Gamma^B_{\alpha jk})+\Gamma^B_{ijl}\Gamma^B_{\alpha lk}-\Gamma^B_{\alpha jl}\Gamma^B_{ilk}
        \\&=-e_{i}(F_{jk\alpha}) +\Gamma^B_{ijl}F_{lk\alpha} + \Gamma^B_{ikl}F_{ jl\alpha}
        \\&=- (\N^B_i F_\alpha)_{jk}.
    \end{align*}
    The equation for $R^B_{\beta i j k}$ follows similarly.
\end{proof}

Finally, using the soliton equations for the total space metric, we show that the transverse metric satisfies the Einstein--Maxwell equations.

\begin{prop} \label{p:EM}
Let $(M^{2n},g,J,H)$ be a BHE manifold.  The transverse geometry satisfies the Einstein--Maxwell equations 
\begin{align*}
    \Rc^{g^T,H^T,f} - F^2=0, \quad dF=0,\quad (d^B)^*_{g^T} F+{\iota_{\nabla f}F}=0, 
\end{align*}
where: 
\begin{enumerate}
\item
$\Rc^{g^T,H^T,f}:=\Rc^{g^T}-\tfrac{1}{4}(H^T)^2+\nabla^2f-\tfrac{1}{2}(d^*_{g^T}H^T)-\tfrac{1}{2}\iota_{\nabla f}H^T $ is the twisted Bakry-Emery curvature, 
\item $F^2(X,Y):= \IP{i_X F_V, i_Y F_V}_{g^T} + \IP{i_X F_{JV}, i_Y F_{JV}}_{g^T}$,
\item $\left((d^B)^*_{g^T} \psi\right)(X) = (d^*_{g^T}\psi)(X) -\tfrac{1}{2}\IP{i_X H^T, \psi}_{g^T}$ is the co-differential of the transversal Bismut connection of $(g^T, J^T)$ acting on basic $2$-forms $\psi$.
\end{enumerate}
Furthermore, one has
\begin{align} \label{Con}
    \tfrac{1}{6}|H^T|^2+\tfrac{1}{2}|F|^2+\Delta f-|\nabla f|^2=\mbox{const}.
\end{align}
\end{prop}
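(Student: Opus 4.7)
The plan is to decompose the two steady pluriclosed soliton identities
\begin{align*}
0 = \Rc_g - \tfrac{1}{4}H^2_g + \nabla^2 f, \qquad 0 = d^*_g H + \iota_{\nabla f} H,
\end{align*}
provided by Proposition \ref{p:BHEprop}, along the $g$-orthogonal splitting $TM = \mathcal{V} \oplus \mathcal{H}$. The three claimed equations should then appear as the horizontal-horizontal, horizontal-vertical, and trace components, respectively, and the closedness $dF=0$ is automatic from $F_V = d\eta$, $F_{JV} = d(J\eta)$.

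For the ``Einstein'' part, I would extract the horizontal-horizontal component of $\Rc_g$ by translating Lemma \ref{L4}(1) back to the Levi-Civita side via Lemma \ref{Ch}. The transverse Ricci $\Rc^{g^T}$ appears naturally, together with a term built from $F_{ij\alpha}F_{kl\alpha} + F_{ij\beta}F_{kl\beta}$ whose trace in the outer pair of indices delivers the source $F^2$, plus quadratic and derivative terms in $H^T$. Since $f$ is basic by Lemma \ref{l:fbasic} and $\nabla_V V = \nabla_{JV}JV = 0$ (as $V, JV$ are Killing of constant unit length), the horizontal part of $\nabla^2 f$ coincides with the transverse Hessian. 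Splitting $H^2_g$ via (\ref{eq: torsion form}) isolates the $\tfrac{1}{4}(H^T)^2$ contribution, while the remaining cross-terms combine into $F^2$. Collecting the resulting symmetric and antisymmetric pieces, and using Lemma \ref{Lrho} on the $(2,0)+(0,2)$ component of $\rho_B = 0$ to identify the hidden $d^*_{g^T}H^T + \iota_{\nabla f}H^T$ correction, then delivers $\Rc^{g^T,H^T,f} = F^2$ in exactly the packaged form of the statement.

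The mixed horizontal-vertical component of the 3-form soliton equation gives the Maxwell equation. Substituting $H$ from (\ref{eq: torsion form}) and using $JF_V = F_V$ (since $F_V$ is $(1,1)$ by Lemma \ref{L3}), and then contracting with $V$ and $JV$, yields after a routine Christoffel computation via Lemma \ref{Ch} the identity $(d^B)^*_{g^T}F_V + \tfrac{1}{2}\iota_{\nabla f}F_V = 0$ and its $F_{JV}$-analogue, which combine into $(d^B)^*_{g^T}F + \tfrac{1}{2}\iota_{\nabla f}F = 0$. The $-\tfrac{1}{2}\langle i_X H^T, \cdot\rangle$ correction built into the definition of $(d^B)^*_{g^T}$ emerges precisely from the difference between the Bismut and Levi-Civita Christoffels along horizontal directions in Lemma \ref{Ch}.

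Finally, the constraint (\ref{Con}) follows from the Perelman $\lambda$-density identity used inside the proof of Proposition \ref{p:BHEprop}: a compact gradient pluriclosed soliton satisfies $\tfrac{1}{6}|H|^2_g + \Delta_g f - |\nabla_g f|^2 \equiv \mbox{const}$. Decomposing $|H|^2_g$ via (\ref{eq: torsion form}), and using that $f$ is basic (so that $\Delta_g f = \Delta_{g^T} f$ and $|\nabla_g f|^2 = |\nabla_{g^T}f|^2$ by the fiber-geodesic computation $\nabla_V V = \nabla_{JV}JV = 0$ noted above), produces (\ref{Con}). The main obstacle I expect is the horizontal-horizontal Ricci bookkeeping: matching the $\nabla^B H^T$ terms of Lemma \ref{L4}(1) against the $d^*_{g^T}H^T$ correction inside $\Rc^{g^T,H^T,f}$, and cleanly separating symmetric from antisymmetric parts so that the Einstein and Maxwell identities decouple in the packaged form claimed.
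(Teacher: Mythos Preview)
Your plan is correct and is essentially the paper's proof: both decompose the soliton identities along $TM = \mathcal{V}\oplus\mathcal{H}$ via Lemmas \ref{Ch} and \ref{L4}, and for (\ref{Con}) both invoke the dilaton identity together with $|H|^2 = |H^T|^2 + 3|F|^2$. The only difference is packaging: the paper works directly with the single Bismut-form soliton identity $\Rc^B_{AE} = -\nabla^B_A\nabla^B_E f$ and reads off $\Rc^B_{il}$ and $\Rc^B_{\alpha k}$ from the traces of Lemma \ref{L4}(1) and (3), which already deliver $\Rc^{g^T}-F^2-\tfrac{1}{2}d^*_{g^T}H^T-\tfrac{1}{4}(H^T)^2$ and $(d^B)^*_{g^T}F_\alpha$ in one stroke, so there is no need for your detour of translating back to the Levi-Civita side and then recombining symmetric and antisymmetric pieces.
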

\begin{proof}
Note that BHE metrics satisfy the soliton equations $ {\rm Rc}^B_{AE}=-\nabla^B_A\nabla^B_Ef$. Taking trace of equations in \Cref{L4}, we derive
\begin{align*}
    {\rm Rc}^B_{il}={\rm Rc}^{g^T}_{il}-F^2_{il}-\tfrac{1}{2}(d^*H^T)_{il}-\tfrac{1}{4}(H^T)^2_{il}=-\nabla^B_i\nabla^B_l f,
\end{align*}
and
\begin{align*}
    {\rm Rc}^B_{\alpha k}=(d^B)_{g^T}^*(F_\alpha)_k=-\nabla^B_\alpha\nabla^B_kf={-(\nabla_{\alpha} df)_k} + \tfrac{1}{2}(\iota_{\nabla f}H)_{\alpha k}= {-}(\iota_{\nabla f}F_{\alpha})_{k}.
\end{align*}
The last statement follows by recalling that generalized Ricci solitons satisfy the dilaton equation (\cite[Proposition 4.33] {GRFBook})
\begin{align*}
    \tfrac{1}{6}|H|^2+\Delta f-|\nabla f|^2=\mbox{const},
\end{align*}
and here $|H|^2=|H^T|^2+3|F|^2$.
\end{proof}

Finally we observe that the reduced equations we derived are equivalent locally to a BHE metric with certain symmetries.  Related results have appeared in the literature (cf. \cite[Proposition 3.1] {fino2019astheno} \cite[Proposition 4]{grantcharov2008calabi}).

\begin{cor}\label{c:transversal} Fix $(M^T, g^T, J^T)$ a Hermitian manifold and suppose there exist closed forms $F_V, F_{JV} \in \Lambda^{1,1}$ satisfying that ${\rm span}_{\mathbb{R}}([F_V], [F_{JV}]) \subset H^2(M^T, \mathbb{R})$ admits a basis in $H^2(M^T,  \mathbb Z)$ and a smooth function $f \in C^{\infty}(M^T)$ such that the conclusions of Propositions \ref{P3} and \ref{p:EM} hold for the data $(g^T, H^T, F_V, F_{JV}, f)$.  Then there exists a principal $T^2$-bundle $M \to M^T$ endowed with a principal connection $\mu$ which,  in a suitable basis $\{V, JV\}$ of $\mathfrak{t}^2={\rm Lie}(T^2)$ satisfies $\mu= \eta V + (J\eta) JV$,  $d\eta=F_V, \, \, d(J\eta) =F_{JV}$ for some $1$-forms $\eta$ and $J\eta$ on $M$,  and such that the Hermitian metric on $M$ defined by the K\"ahler form $\gw = \eta \wedge J\eta + \pi^* \gw^T$  and the integrable almost complex structure $J$ given by the horizontal lift of $J^T$ and the tautological action on the vertical fundamental vector fields $V, JV$ is BHE,  with holomorphic Killing vector fields equal to $V, JV$ and transversal K\"ahler geometry  $(g^T, H^T, F_V, F_{JV}, f)$.
\end{cor}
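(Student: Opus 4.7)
The plan is to construct the principal $T^2$-bundle from the cohomological data, lift the geometric structures to its total space, and then reverse the computations of Propositions \ref{P3} and \ref{p:EM} to verify that the resulting Hermitian manifold is BHE.

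First, the hypothesis that $\mathrm{span}_{\mathbb R}([F_V], [F_{JV}]) \subset H^2(M^T, \mathbb R)$ admits a basis in $H^2(M^T, \mathbb Z)$ allows a standard Chern--Weil/Kobayashi construction of a principal $T^2$-bundle $\pi\colon M \to M^T$ with connection $\mu = \eta\, V + (J\eta)\, JV$ whose curvature is $F_V\, V + F_{JV}\, JV$ in the chosen basis. Define $J$ on $M$ as the horizontal lift of $J^T$ on $\mathcal H = \ker\mu$ together with $J(V) = JV$ on the vertical space $\mathcal V$, set $\omega = \eta\wedge J\eta + \pi^*\omega^T$, and take $\pi^*f$ as the soliton potential on $M$. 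By construction the $T^2$-action is holomorphic and isometric, so $V, JV$ are holomorphic Killing fields. Integrability of $J$ is a direct Newlander--Nirenberg check: the type-$(1,1)$ hypothesis on $F_V, F_{JV}$ says the $\mathfrak t^2_{\mathbb C}$-valued curvature has no $(0,2)$-component, so together with integrability of $J^T$ the $(0,1)$-distribution is involutive.

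Next, the computations of $d\omega$ and $dd^c\omega$ from the proof of Proposition \ref{P3} now run in reverse: the trace conditions $\tr_{\omega^T}F_V = 0$, $\tr_{\omega^T}F_{JV} = -2$ together with $\theta_{g^T} = df$ identify the Lee form of $\omega$ as $\theta = df + \eta$, while the anomaly cancellation hypothesis gives $dd^c\omega = 0$. Thus $(M,\omega,J)$ is pluriclosed with torsion $H = H^T + JF_V\wedge\eta + JF_{JV}\wedge J\eta$ and, after normalization of $\mu$, fundamental vector field $V = \tfrac12(\theta^\sharp - \nabla f)$. Invariance of $H$ under the $T^2$-action together with the Christoffel formulas of Lemma \ref{Ch} then yield $\nabla^B V = \nabla^B JV = 0$.

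Finally, verifying $\rho_B = 0$ is the main step. Substituting the transverse Einstein--Maxwell equations of Proposition \ref{p:EM} into the Bismut curvature formulas of Lemma \ref{L4}, both the purely horizontal identity $\Rc^B_{il} = -\nabla^B_i\nabla^B_l f$ and its vertical--horizontal counterpart $\Rc^B_{\alpha k} = \tfrac12(\iota_{\nabla f}H)_{\alpha k}$ follow, so $(g,H,f)$ is a steady pluriclosed soliton on $M$. Since $V$ is Bismut-parallel, Proposition \ref{P1} upgrades this soliton identity to $\rho_B\circ J = \nabla^B(\theta - df) = 2\nabla^B V^\flat = 0$, establishing BHE. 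The delicate point is the bookkeeping in this last step: one must confirm that the transverse Yang--Mills and Bianchi identities $dF = 0$ and $(d^B)^*_{g^T}F + \tfrac12\iota_{\nabla f}F = 0$ for $F_V, F_{JV}$ reproduce precisely the vertical--horizontal components of $\Rc^B$ expressed through Lemma \ref{L4}(3) via $R^B_{\alpha ijk} = -(\nabla^B_i F_\alpha)_{jk}$.
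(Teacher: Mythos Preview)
Your proposal is correct and follows the same overall route as the paper: build the $T^2$-bundle from the integral cohomological data, define $(\omega,J)$ as in the statement, and then run the computations of Propositions~\ref{P3} and~\ref{p:EM} in reverse. The emphasis, however, is complementary. The paper devotes essentially all of its argument to the bundle construction, carefully treating the three cases $\dim_{\mathbb R}\mathrm{span}([F_V],[F_{JV}]) = 2,1,0$ (with a further sub-case when one class vanishes) in order to produce a genuine principal $T^2$-bundle with connection having the prescribed curvature; it then dispatches the BHE verification in a single sentence. You compress the bundle construction into ``standard Chern--Weil/Kobayashi'' and instead flesh out the BHE verification, which the paper omits. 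Your route through Lemma~\ref{Ch} to obtain $\nabla^B V=0$ directly from the $(1,1)$-type of $F_V,F_{JV}$ (independently of any BHE assumption), followed by the soliton identities and Proposition~\ref{P1} to conclude $\rho_B\circ J = \nabla^B(\theta-df)=0$, is a clean and legitimate way to close the argument that the paper leaves implicit. The one point you might tighten is the bundle step: when $[F_V]$ and $[F_{JV}]$ are linearly dependent (or one vanishes) the change of basis between the integral generators $(\epsilon_1,\epsilon_2)$ and $(V,JV)$ is degenerate, and the paper's case analysis is there precisely to handle this; a single sentence acknowledging this would make your sketch complete.
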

\begin{proof} Let us first consider the case when ${\rm span}_{\mathbb R}([F_V], [F_{JV}])$ is $2$-dimensional. Given the reduced data $(g^T, H^T, F_{V}, F_{JV}, f)$ on $(M^T, J^T)$,  and a basis $\{ \epsilon_1, \epsilon_2\}$ of ${\rm span}_{\mathbb R}([F_V], [F_{JV}])$ in the torsion free part $\widetilde{H}^2(M^T, {\mathbb Z})$ of $H^2(M^T, \mathbb{Z})$, there exist by Chern-Weil theory a principal $T^2$-bundle $\pi: M \to M^{T}$ (unique up to tensoring  with a flat principal $T^2$-bundle over $M^T$) whose torsion-free Chern class is $\epsilon=(\epsilon_1, \epsilon_2)$. 
Let $(\xi_1, \xi_2)$ denote the circle generators of the $S^1\times S^1$-action on the total space $M$, viewed also as a basis of the Lie algebra $\mathfrak{t}^2$ of $T^2$. Writing
\[ 2\pi \epsilon_1 = a_{V} [F_V] + a_{JV} [F_{JV}], \qquad 2\pi \epsilon_2 = b_V[F_V] + b_{JV}[F_{JV}],   \]
for  $(a_V, a_{JV}), (b_V, b_{JV})\in {\mathbb R}^2$, 
$M$ has a unique connection $\mu$ whose curvature is 
 \[ d\mu=\Big(a_{V} (\pi^* F_V) + a_{JV} (\pi^*F_{JV})\Big)\xi_1 +  \Big(b_V( \pi^*F_V) + b_{JV} (\pi^* F_{JV})\Big) \xi_2.\] The connection $\mu$ defines a splitting $TM= \mathcal{V} \oplus \mathcal{H}$ into vertical and horizontal parts.  We now define vector fields $(V, JV)$ in $\mathcal{V}$ by inverting  the relation
\[V= a_V \xi_1  + b_V \xi_2, \qquad JV= a_{JV}\xi_1 + b_{JV} \xi_2.   \] 
We then define a  Riemannian metric $g$ and a $g$-orthogonal almost complex structure $J$  on $M$ by pulling back $(g^T, J^T)$ to $\mathcal{H}$, and defining orthogonally $(g, J)$ on $\mathcal{V}$ so that ${V, JV}$ is a unitary Hermitian basis.  It follows from the proofs of Propositions \ref{P3} and \ref{p:EM} that the Hermitian metric  $(g, J)$ is BHE, as required.

\smallskip 
In the case when when ${\rm span}_{\mathbb R}([F_V], [F_{JV}])$ is $1$-dimensional and $[F_V]\neq 0 \neq [F_{JV}]$, we take two integer generators  $\epsilon_1, \epsilon_2 \in \widetilde{H}^{2}(M^T, \mathbb{Z})$ of  ${\rm span}_{\mathbb R}([F_V], [F_{JV}])$ and consider the principal $T^2$-bundle over $M^T$ with Chern calss $\epsilon=(\epsilon_1, \epsilon_2)$ and connection $\mu$ whose curvature is
\[ d\mu= a_V (\pi^*F_V) \xi_1 + b_{JV} (\pi^* F_{JV})\xi_2\] where the non-zero real constant $a_V$ and $b_{JV}$ are defined from
\[ a_V [F_V] = 2\pi \epsilon_1, \qquad  b_{JV}[F_{JV}] = 2\pi \epsilon_2.\]
We conclude as in the previous case, letting $V={a_V} \xi_1$ and $JV ={b_{JV}} \xi_2$.

\smallskip
Finally, if one class (say $[F_V]=0$) or both classes $[F_V]$ and $ [F_{JV}]$ are zero, we consider,  respectively,  principal $T^2$-bundles over $M^T$ with Chern class
$\epsilon=(0, \epsilon_2)$ or $\epsilon=(0,0)$, and let (respectively)  $V=\xi_1, JV =(2\pi\epsilon_2/[F_{JV}]) \xi_2$  and  $V=\xi_1, JV=\xi_2$ in the construction above. \end{proof}

\begin{rmk} If $(g^T, H^T, F_{V}, F_{JV}, f)$ satisfies the conditions of  Propositions \ref{P3} and \ref{p:EM} so does $(g^T, H^T, -F_V, F_{JV})$. Geometrically, this corresponds to the fact that almost complex structure $I$  on $M$ which is equal to $-J$ on $\mathcal{V}$ and to $J$ on $\mathcal{H}$ is Hermitian with respect to $g$ is also BHE. 
\end{rmk}

\begin{rmk} For a given reduced data $(g^T, H^T, F_{V}, F_{JV}, f)$ on $(M^T, J^T)$, Corollary~\ref{c:transversal} defines infinitely many diffeotypes on BHE manifolds $M$, all sharing the same universal cover, and parametrized by the choices of bases of ${\rm span}_{\mathbb R}(F_V, F_{JV})$ inside the torsion free part of $H^2(M^T, \mathbb{Z})$ and pairs of torsion elements of $H^2(M^T, \mathbb{Z})$.
\end{rmk}

\subsection{The transversal K\"ahler geometry for BHE threefolds} 

When $(M^6, g, J)$ is a non-K\"ahler BHE complex threefold, the transversal Hermitian structure $(g^T, J^T)$ is real four-dimensional,  and Propositions~\ref{P3} and \ref{p:EM} admit further simplification which we now present.

{
\begin{prop}\label{p:4D}  Let $(M^T, g^T, J^T)$ be any local leaf space for $(M, \mathcal{V})$ with the induced transversal Hermitian structure. Then, $g^T$ is conformal to a K\"ahler metric $(g_K^T, J^T)$ which satisfies the following conditions:
\begin{enumerate}
\item $\Scal(g_K^T) = 2e^{f} > 0$.
\item There exists a closed primitive real $(1,1)$-form $\alpha^T$ such that
\[ \left(dd^c \Scal(g_K^T)\right) \wedge \omega_K^T = 2\left( \alpha^T \wedge \alpha^T + \rho_K^T\wedge \rho_K^T \right), \]
where $\omega_K^T$ is the K\"ahler form and $\rho_K^T$ is the Ricci form of $(g_K^T, J^T)$.
\end{enumerate}
Conversely, any K\"ahler surface $(M^T, J^T, g_K^T)$ satisfying the above conditions for a closed primitive $(1,1)$-form $\alpha^T$ such that ${\rm span}_{\mathbb R}([\alpha^T], c_1(M^T, J^T))$ admits a basis in $H^2(M^T, \mathbb{Z})$ gives rise, via Corollary~\ref{c:transversal}, to a BHE threefold by setting 
\[ g^T:= \frac{\Scal(g_K^T)}{2}g_K^T, \qquad F_V := \alpha^T, \qquad F_{JV}:=- \rho_K^T, \qquad f= \log\left(\frac{\Scal(g_K^T)}{2}\right).\]
\end{prop}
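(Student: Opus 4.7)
The plan is to derive each of the claimed conclusions from the reduced equations of Propositions~\ref{P3} and~\ref{p:EM} specialized to the real-$4$-dimensional transverse geometry, and to reverse each step for the converse via Corollary~\ref{c:transversal}. For the conformal K\"ahler reduction, Proposition~\ref{P3}(1) gives $\theta^{g^T} = df$, and in complex dimension $2$ the defining identity $d\omega^T = \theta^T \wedge \omega^T$ then yields $d(e^{-f}\omega^T) = e^{-f}(-df + \theta^T)\wedge \omega^T = 0$, so $\omega_K^T := e^{-f}\omega^T$ (equivalently $g^T = e^f g_K^T$) defines a K\"ahler structure on $(M^T, J^T)$. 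It remains to match this conformal factor $e^f$ to $\tfrac{1}{2}\Scal(g_K^T)$.

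For the scalar curvature identity, trace the twisted Einstein equation in Proposition~\ref{p:EM} to obtain $\Scal(g^T) = \tfrac{1}{4}|H^T|^2 + |F|^2 - \Delta f$, and combine with the dilaton equation~\eqref{Con}. In the $4$-dimensional transverse setting $H^T = -d^c\omega^T$ together with $\theta^T = df$ reduces $H^T$ so that $|H^T|^2$ is expressible in $|df|^2$ alone. Eliminating $|F|^2$ and $\Delta f$ between these two relations expresses $\Scal(g^T)$ in terms of $f$, and applying the real-dim-$4$ conformal transformation formula for scalar curvature to $g_K^T = e^{-f}g^T$ pins down $\Scal(g_K^T)$ as a positive exponential of $f$, yielding the relation $g^T = \tfrac{1}{2}\Scal(g_K^T)\, g_K^T$.

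For identifying $\alpha^T$ and deriving the main identity, note that $F_V$ is closed, of type $(1,1)$ by Lemma~\ref{L3}, and $\omega^T$-primitive by Proposition~\ref{P3}(2); primitivity is conformally invariant, so set $\alpha^T := F_V$. To show $F_{JV} = -\rho_K^T$ pointwise, combine the defining relation $F_{JV} = d(J\theta) - dd^c f$ (from $\eta = \theta - df$) with the conformal relation $\rho_{Ch}(g^T) = \rho_K^T - \tfrac{1}{2}dd^c f$ between Chern--Ricci forms in complex dimension $2$, which comes from $\log\det g^T = 2f + \log\det g_K^T$. The hypothesis $\rho_B = 0$, unpacked via the standard identity relating $\rho_B$ and $\rho_{Ch}$ by a $dJ\theta$-type term in complex dimension $2$, then forces the pointwise equality. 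Substituting into the anomaly equation Proposition~\ref{P3}(3) and using $dd^c\omega_K^T = 0$ to rewrite $dd^c\omega^T = \tfrac{1}{2}dd^c \Scal(g_K^T) \wedge \omega_K^T$ produces the identity in (2).

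For the converse, given K\"ahler $(g_K^T, J^T)$ and a closed primitive $(1,1)$-form $\alpha^T$ satisfying (1)--(2), define $g^T, F_V, F_{JV}, f$ as prescribed and verify the hypotheses of Corollary~\ref{c:transversal}. The Lee-form condition is immediate from the conformal relation; $\omega_K^T$-primitivity of $\alpha^T$ yields $\tr_{\omega^T}F_V = 0$; the identity $\tr_{\omega_K^T}\rho_K^T = \Scal(g_K^T)$ combined with $g^T = \tfrac{s}{2}g_K^T$ yields $\tr_{\omega^T}F_{JV} = -2$; and the anomaly equation follows by reversing the previous calculation. The remaining Einstein--Maxwell equations of Proposition~\ref{p:EM} follow by direct computation in the rescaled geometry. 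Finally, $[F_{JV}] = -[\rho_K^T]$ is proportional to $c_1(M^T, J^T)$, so the integrality condition on $\spn_{\mathbb R}([\alpha^T], c_1(M^T, J^T))$ matches that of Corollary~\ref{c:transversal}, producing the desired BHE threefold. The main obstacle is the third step --- pinning down $F_{JV} = -\rho_K^T$ pointwise --- which requires careful bookkeeping of sign conventions for $d^c$ and of the relation between the Bismut and Chern--Ricci forms in complex dimension $2$.
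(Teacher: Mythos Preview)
Your plan for the conformal K\"ahler reduction (step 1), the anomaly-to-main-identity passage (step 4), and the converse all track the paper's argument closely. The real divergence is in how you obtain $F_{JV} = -\rho_K^T$, and this is also where your proposal has a gap.

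You propose to use the total-space identity $\rho_B = 0$ together with the relation $\rho_B = \rho_{Ch} + dJ\theta$-type identity, then pass to the transverse Chern--Ricci form via the conformal change $g^T = e^f g_K^T$. The missing link is the passage from $\rho_{Ch}(g)$ on the $6$-dimensional total space to $\rho_{Ch}(g^T)$ on the $4$-dimensional leaf space. This is not automatic: the Chern--Ricci form is computed from the full Hermitian volume form, and on $M$ this involves the connection forms $\eta, J\eta$, which are not holomorphic. Your phrase ``in complex dimension $2$'' applied to the $\rho_B$--$\rho_{Ch}$ identity suggests you are conflating the two settings. A bundle lemma of the form $\rho_{Ch}(g) = \pi^*\rho_{Ch}(g^T)$ can be proved here (using that $V^{1,0}$ trivializes the vertical holomorphic line bundle), but it is an extra step you have not supplied, and once you start tracking it together with the sign conventions you flag, the argument is no shorter than the paper's.

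The paper avoids this entirely by staying on the transverse side: it takes the symmetric Einstein--Maxwell equation from Proposition~\ref{p:EM},
\[
\Rc^{g^T} - \tfrac{1}{4}(H^T)^2 + \nabla^2 f = F^2,
\]
computes both sides explicitly in the $4$-dimensional setting (using $H^T = Jdf\wedge\omega^T$, the decomposition $F_{JV} = -\tfrac{1}{2}\omega^T + \gamma^T$, and the Laplacian identity coming from Proposition~\ref{P3}(3)), and recognizes the resulting left-hand side
\[
\Rc^{g^T} + \nabla^2 f + \tfrac{1}{2}\,df\otimes df - \tfrac{1}{2}\bigl(\Delta f + |df|^2\bigr)g^T
\]
as exactly the Ricci tensor of the conformal metric $g_K^T = e^{-f}g^T$ (the standard dimension-$4$ conformal change formula). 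This yields $\rho_K^T = -F_{JV}$ in one stroke, and the scalar-curvature identity $\Scal(g_K^T) = 2e^f$ then drops out by tracing, rather than requiring a separate argument via the dilaton equation as in your step~2.
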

\begin{proof} We first recall that in real 4-dimensions 
\begin{equation} \label{eq:Lee} d\omega^T = \theta_{g^T} \wedge \omega^T. \end{equation}Proposition~\ref{P3}-(1) then means that
\[ g_K^T := e^{-f} g^T\]
is a K\"ahler metric.  Equivalently, the Bismut torsion of $(g^T, J^T)$ has the form
\begin{equation}\label{eq:torsion} H^T = -d^c\omega^T = { -}Jdf \wedge \omega^T.\end{equation}
Notice using Proposition~\ref{P3}-(2) that we can define primitive $(1,1)$ forms $\ga^T$ and $\gamma^T$ via 
\[ F_V = \alpha^T, \quad F_{JV} = - \tfrac{1}{2} \omega^T + \gamma^T.\]
Thus the symmetric part of the Einstein--Maxwell equation in Proposition~\ref{p:EM} reads
\begin{equation}\label{EM-4D}
{\rm Rc}^{g^T} - \tfrac{1}{4}(H^T)^2+ \nabla^2f = F^2 = \tfrac{1}{4}\left(|\alpha^T|^2 + |\gamma^T|^2 +1\right)g^T + \gamma^T J^T.
\end{equation}
Notice that, by \eqref{eq:torsion}, 
\[ (H^T)^2 (X, Y) = \IP{i_X H^T, i_Y H^T}= 2|df|^2g(X,Y) - 2 df \otimes df, \]
whereas Proposition~\ref{P3}-(3),  \eqref{eq:Lee}  and Proposition~\ref{P3}-(1) give 
\begin{equation}\label{eq:laplace} dd^c \omega^T = (\Delta f) dV_{g^T} = \tfrac{1}{2}\left( -|\alpha^T|^2 -|\gamma^T|^2 +1\right) dV_{g^T}. \end{equation}
Using the above two identities to express the terms $(H^T)^2$ and $|\alpha^T|^2 + |\gamma^T|^2$ in \eqref{EM-4D}, we can  rewrite  \eqref{EM-4D} as
\[ {\rm Rc}^{g^T} + \nabla^2 f + \tfrac{1}{2} df\otimes df +\tfrac{1}{2}(\Delta f - |df|^2)g = \tfrac{1}{2}g^T + \gamma^T J. \]
The LHS is nothing but the Ricci tensor of $g_K^T= e^{-f}g^T$ (see e.g. \cite[Ch.1-J]{Besse}). Thus, by composing with $J^T$,  we equivalently get
\begin{equation}\label{rho} \rho_K^T = \tfrac{1}{2}\omega^T - \gamma^T = -F_{JV}.\end{equation}
Taking trace with respect to $\omega_K^T= e^{-f} \omega^T$ in the above equality gives 
\begin{equation}\label{eq:scal}
\Scal(g_K^T)= 2 e^{f},\end{equation}
finishing the proof of item (1).
Substituting back \eqref{rho} in Proposition~\ref{P3}-(3) and using that now $\omega^T = \frac{\Scal(g_K^T)}{2} \omega_K^T$, we obtain the claimed item (2).

\vskip 0.1in
We now show that the conditions expressed in terms of $(g_K^T, J^T, \alpha^T)$ are actually sufficient.  In particular we verify that the data $(g^T, J^T, F_V = \ga^T, F_{JV} = - \rho_K^T,f)$ satisfy all the conditions of Propositions~\ref{P3} and \ref{p:EM}, and then invoke Corollary~\ref{c:transversal}.  First note that $\ga^T$ is primitive with respect to $\omega^T$, and hence also $\omega_K^T$.  As $\ga^T$ is also closed, it is thus a harmonic anti-self-dual $2$-form with respect to each metric in the conformal class $[g^T]$.  It is not difficult to see using \eqref{eq:torsion} and $\tr_{\omega^T}(\ga^T)=0$ that $(d^B)^*_{g^T} \ga^T {+} \imath_{\nabla f} \ga^T = d^*_{g^T} \ga =0$.  Next, defining also $\gamma^T$ via \eqref{rho}, note that $\gamma^T$ is an anti-self-dual form.  As furthermore $\omega^T$ is self-dual, the fact that $\rho_K^T$ is closed is equivalent to
\[ d^*_{g^T} \gamma^T = -\tfrac{1}{2} d^*_{g^T} \omega^T.\]
As $\gw^T$ is conformally K\"ahler, we also note 
\[ d^*_{g^T} \omega^T = -J\theta_{g^T} = {-}Jdf, \]
and also that \eqref{eq:torsion} holds.  One then directly checks that
{
\[ 
\begin{split} \left((d^B)^*_{g^T} F_{JV}\right)_X &=  \left(d^*_{g^T} F_V\right)_X -\frac{1}{2}\left\langle \imath_X H^T, F_{JV}\right\rangle \\
&=  Jdf(X) +\frac{1}{2}\left\langle \imath_X(Jdf \wedge \omega^T), F_{JV} \right\rangle
\\ &=- \imath_{\nabla f} F_{JV},
\end{split}\]
where we have used $\tr_{\omega^T}(F_{JV})=-2$ for passing to the last line.
}
Furthermore we observe that the skew part of ${\rm Rc}^{g^T, H^T, f}$,
\[ -\tfrac{1}{2} (d^*_{g^T}H^T) -\tfrac{1}{2}\imath_{\nabla f} H^T,\]
 will identically vanish by noting that \eqref{eq:torsion} implies the vanishing of the two terms separately.  Next we check \eqref{Con}. By \eqref{eq:torsion}, we have $|H^T|^2 = 6|\nabla f|^2$ whereas by the assumed equation of item (2), we compute
\[ |F|^2=|F_V|^2 + |F_{JV}|^2 = 1 +|\gamma^T|^2 + |\alpha^T|^2 = -2\Delta f + 2. \]
With these equations in place one can reverse the arguments from the first part to establish the symmetric part of the Einstein-Maxwell equation as in line \eqref{EM-4D}.
\end{proof}
}

\begin{rmk}\label{r:PDE} Proposition~\ref{p:4D} reduces the search for compact, non-K\"ahler  BHE $6$-dimensional manifolds $M$ for which the corresponding foliation $\mathcal{V}$ is regular (i.e. the space of leaves is a smooth compact manifold) to solving a non-linear PDE problem on a given compact K\"ahler surface $S=(M^T, J^T)$  satisfying the  following apriori conditions on its first Chern number and Kodaira dimension 
\[ c_1^2(S) \geq 0, \qquad {\rm Kod}(S)=-\infty,\]
where the first inequality follows by integrating the relation in Proposition~\ref{p:4D}-(2) whereas the condition on the Kodaira dimension is a consequence from Proposition~\ref{p:4D}-(1). 

On any such $S$, given a K\"ahler class $\Omega \in H^{1,1}(S, \mathbb{R})$ and a cohomology class $A \in H^2(S, \mathbb{R})$ satisfying
\begin{equation}\label{eq:PDE-topo}
 \Omega \cdot A=0,  \qquad A\cdot A =-c_1^2(S) , \qquad {\rm span}_{\mathbb{R}}(A, c_1(S)) =\Lambda \otimes \mathbb{R} \qquad \Lambda < H^2(S, \mathbb{Z}),\end{equation}
we want to find a K\"ahler metric $\omega_{\varphi}= \omega_0 + dd^c \varphi \in \Omega$ such that 
\begin{equation}\label{eq:PDE} \tfrac{1}{2} (dd^c  \Scal(\omega_{\varphi})) \wedge \omega_{\varphi} =  \alpha_{\varphi} \wedge \alpha_{\varphi} + \rho(\omega_{\varphi}) \wedge \rho(\omega_{\varphi}), \end{equation}
where $\rho(\omega_{\varphi})=-\tfrac{1}{2} dd^c \log\left(\frac{\omega_{\varphi}^2}{Vol_{\mathbb C}}\right)$ is the Ricci form of $\omega_\varphi$, $\Scal(\omega_{\varphi}) = 4\left(\frac{\rho(\omega_{\varphi})\wedge \omega_{\varphi}}{\omega_{\varphi}\wedge \omega_{\varphi}}\right)$ is its scalar curvature,   and $\alpha_{\varphi}$ is the harmonic representative of $2\pi A$ with respect to $\omega_{\varphi}$. Notice that as the harmonic $2$-form $\alpha_{\varphi}$ has a constant trace with respect to  the K\"ahler metric $\omega_{\varphi}$, the topological condition  \eqref{eq:PDE-topo} and the fact that $b_+(S)=1$ when ${\rm Kod}(S)=-\infty$ guarantee that $\alpha_{\varphi}$ is a primitive $(1,1)$-form.  It will be interesting to develop a comprehensive existence theory for  the PDE \eqref{eq:PDE} for the unknown function $\varphi$.  Moreover, one expects these formal considerations to apply to the case of a general foliation.
\end{rmk}

\subsection{The transversal Betti numbers}

By Remark~\ref{r:riem-fol}, $(M, \mathcal{V}, g^T)$ is a compact manifold endowed with a Riemannian foliation (see \cite{Molino}) and transversal orientation determined by $J^T$. One can then define Hodge theory for ${\mathcal V}$-basic forms on $N$, by using the Hodge Laplace operators with respect to transversal metric $g^T$ (see \cite{KH}).   This leads to finite dimensional basic deRham cohomology groups $H^{k}_T(M, \mathcal{V}) \cong {\mathcal H}^{k}_T(M, \mathcal{V})$,   where $\mathcal{H}^{k}_T(M, \mathcal{V})$ denotes the space of basic harmonic $k$-forms with respect to $g^T$. We denote by $b_k^T(M, \mathcal{V}):= {\rm dim}_{\mathbb{R}} {\mathcal H}^{k}(M, \mathcal{V})$ the corresponding transversal Betti numbers.  

When $M$ is real 6-dimensional, the transversal Hodge  theory is modeled on a 4-dimensional space, so we have additionally the splitting \[\mathcal{H}_T^2(M, \mathcal{V}) = \mathcal{H}_T^+(M, \mathcal{V}) \oplus \mathcal{H}_T^-(M, \mathcal{V})\]
as a sum of self-dual and anti-self-dual harmonic basic $2$-forms, thus leading to the definition of the Betti numbers $b^T_{\pm}(M, \mathcal{V})= {\rm dim}_{\mathbb{R}}{\mathcal H}^{\pm}(M, \mathcal{V})$ with 
\[ b_2^T(M, \mathcal{V})= b^T_+(M, \mathcal{V}) + b^T_-(M, \mathcal{V}).\]

\begin{prop}\label{p:Betti} Let $(M, \omega, J)$ be a compact non-K\"ahler real 6-dimensional BHE manifold. Then $b^T_{+}(M, \mathcal{V})=1$, $b^T_-(M,\mathcal{V}) \geq 1$,  and thus $b_2(M, \mathcal{V}) \geq 2$.
\end{prop}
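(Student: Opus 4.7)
The plan is to combine the transversal conformally K\"ahler structure from Proposition~\ref{p:4D} with basic Hodge theory on $(M, \mathcal V)$, and to extract the non-triviality of $b_-^T$ from the integrated form of the identity in Proposition~\ref{p:4D}(2). By Proposition~\ref{p:BHEprop} the generators $V, JV$ of $\mathcal V$ are Killing, Bismut-parallel, and of constant norm; hence the leaves of $\mathcal V$ are minimal submanifolds and $\mathcal V$ is a taut, transversally oriented Riemannian foliation. Basic Hodge theory for a taut codimension-$4$ Riemannian foliation then provides the decomposition $H^2_T(M, \mathcal V) = \mathcal H_T^+ \oplus \mathcal H_T^-$, a basic Stokes theorem, and a non-degenerate transversal intersection pairing $Q([\alpha], [\beta]) = \int_{M/\mathcal V} \alpha \wedge \beta$ of signature $(b_+^T, b_-^T)$. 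I shall also use that in real dimension four the Hodge star on $2$-forms depends only on the conformal class, so the $\pm$ splitting is the same for $g^T$ and for $g_K^T$.

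For $b_+^T = 1$, the transversal K\"ahler form $\omega_K^T$ is a closed, self-dual, basic $2$-form with $\int_{M/\mathcal V} \omega_K^T \wedge \omega_K^T > 0$, so $[\omega_K^T] \neq 0$ and $b_+^T \geq 1$. For the reverse inequality, I would invoke the transversal Bochner--Weitzenb\"ock formula for a transversely K\"ahler foliation of positive transversal scalar curvature: a basic harmonic self-dual $2$-form decomposes as $c\,\omega_K^T$ plus the real part of a transversely holomorphic $(2,0)$-form, and the $(2,0)$-piece must vanish since $\Scal(g_K^T) = e^{2f} > 0$ by Proposition~\ref{p:4D}(1).

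To show $b_-^T \geq 1$, I argue by contradiction, assuming $b_-^T = 0$. Then every closed anti-self-dual basic $2$-form is cohomologically trivial in $H^2_T$, and, being basic-harmonic as well, must vanish identically. Applied to the primitive $(1,1)$-form $\alpha^T$ of Proposition~\ref{p:4D}(2) this forces $\alpha^T \equiv 0$. Integrating the identity of Proposition~\ref{p:4D}(2) transversally, the left-hand side is basic-exact because $d\omega_K^T = 0$ gives $dd^c \Scal(g_K^T) \wedge \omega_K^T = d(d^c\Scal(g_K^T) \wedge \omega_K^T)$, so basic Stokes yields
\[ 0 = \int_{M/\mathcal V} \rho_K^T \wedge \rho_K^T = 4\pi^2 (c_1^T)^2, \]
where $c_1^T = [\rho_K^T/(2\pi)] \in H^2_T(M,\mathcal V)$. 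Under the standing assumption $b_-^T = 0$ the pairing $Q$ is positive definite of rank $b_+^T = 1$, so $(c_1^T)^2 = 0$ forces $c_1^T = 0$. On the other hand, the K\"ahler identity $\Scal(g_K^T)\,dV_{g_K^T} = 2\,\rho_K^T \wedge \omega_K^T$ together with positivity of $\Scal(g_K^T)$ implies $4\pi \langle c_1^T, [\omega_K^T]\rangle = \int_{M/\mathcal V} \Scal(g_K^T)\,dV_{g_K^T} > 0$, contradicting $c_1^T = 0$. Hence $b_-^T \geq 1$, and combining gives $b_2^T(M, \mathcal V) = b_+^T + b_-^T \geq 2$.

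The main subtlety is setting up the basic Hodge and intersection theory (Poincar\'e duality, the $\pm$-signature decomposition, and basic Stokes) rigorously on the possibly non-regular foliation $\mathcal V$; all of this rests on tautness, which is immediate from Proposition~\ref{p:BHEprop}. Once in place, the scheme is a transversal analogue of the classical compact K\"ahler surface picture, with the additional rigid constraint coming from the primitive form $\alpha^T$ forcing $(c_1^T)^2 = 0$ when $b_-^T$ vanishes.
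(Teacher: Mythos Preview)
Your proposal is correct and follows essentially the same strategy as the paper: a transversal Bochner argument using $\Scal(g_K^T)>0$ to force $b_+^T=1$, and a contradiction for $b_-^T=0$ obtained by integrating the identity of Proposition~\ref{p:4D}(2) to get $(c_1^T)^2=0$ and then playing this against the positivity of $\int \Scal(g_K^T)\,dV_{g_K^T}$.

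The only notable difference is in how the integrations are packaged. You work with the transversal intersection pairing on $H^2_T(M,\mathcal V)$ and basic Stokes, which requires invoking Poincar\'e duality and the signature decomposition for taut Riemannian foliations; you flag this correctly as the main technical cost. The paper instead integrates everything on the total space $M$ by wedging with $\eta\wedge J\eta$ and using the relations $d(-J\eta)=\rho_K^T$, $d\eta=\alpha^T$: it writes $\rho_K^T=\lambda\omega_K^T+d\beta^T$ explicitly, obtains $\lambda=0$ from one integration by parts, and then derives the contradiction from a second. The paper's route is more hands-on and avoids appealing to the abstract basic Hodge/Poincar\'e machinery beyond the definition of $\mathcal H^{\pm}_T$; your route is shorter and more conceptual once that machinery is in place. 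For $b_+^T\le 1$ the paper uses the Weitzenb\"ock operator $P=\tfrac{\Scal}{3}I-2W^+$ together with the explicit eigenvalues of $W^+$ on a K\"ahler surface, whereas you use the equivalent decomposition into $c\,\omega_K^T$ plus a transversely holomorphic $(2,0)$-part killed by Bochner; both are standard and equivalent.
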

\begin{proof} By Proposition~\ref{p:4D}, $\omega_K$ is a non-zero harmonic basic self-dual $2$-form on $M$ so that $b^T_+(M, \mathcal{V})\geq 1$. To see that $b^T_+(M, \mathcal{V})=1$, we use that $\omega_K$ has positive scalar curvature and the transverse version of the well-known Bochner argument: For any basic $g_K^T$-harmonic self-dual $2$-form $\psi$,  we have 
\begin{equation}\label{eq:weitzenbock} (\nabla^{g_K^T})^* \nabla^{g_K^T} \psi + P_{g_K^T}(\psi)=0,\end{equation}
where $P_{g_K^T} = \frac{\Scal(g_K^T)}{3} I - 2 W^+(g_K^T)$; for a K\"ahler surface, the self-dual Weyl tensor  $W^+(g_K^T)$  has eigenvalues $\left(\frac{\Scal(g_K^T)}{6}, - \frac{\Scal(g_K^T)}{12}, -\frac{\Scal(g_K^T)}{12}\right)$ and the K\"ahler form $\omega_K^T$ is an eigenform corresponding to the eigenvalue $\frac{\Scal(g_K^T)}{6}$ (see e.g. \cite{gauduchon-besse4}). It thus follows that when $\Scal(g_K^T)>0$, $P_{g_K^T}$ is a non-negative symmetric operator on $\Lambda^+(\mathcal{H}^*)$, such that $P_{g_T^K}(\psi)=0$  at a point  iff  $\psi$ is a multiple of $\omega_K^T$  at that point. Taking inner product with $\psi$ with respect to $g_K^T$ in \eqref{eq:weitzenbock}, we get 
\[\tfrac{1}{2} \Delta_{g_k^T}|\psi|^2 = |\nabla^{g_K^T} \psi|^2 + \IP{P_{g_K^T}(\psi), \psi}  \geq 0. \]
By the maximum principle, $|\psi|^2$ is constant, $\nabla^{g_J^T} \psi \equiv 0$, 
and $P_{g_K^T}(\psi)\equiv 0$. It thus follows that $\psi = \lambda \omega_K^T$ for some constant $\lambda$. 

We now prove that $b^T_-(M, \mathcal{V}) \geq 1$.
Suppose for contradiction that $b^T_-(M, \mathcal{V})=0$.  Then, in Proposition~\ref{p:4D} we must have $\alpha^T=0$ and
\[ \rho_K^T = \lambda \omega_K^T + d \beta^T,\]
for some basic $1$-form $\beta^T$ on $(M, \mathcal{V})$. The Einstein-Maxwell equation in Proposition~\ref{p:4D} then becomes
\[ dd^c \Scal(g^T_K) \wedge \omega_K^T = 2(\lambda \omega_K^T + d\beta^T)^2.\]
We next multiply the above equality of basic $4$-forms by the $2$-form $\eta\wedge J\eta$ (see \eqref{eq:eta}) and integrate over $M$, recalling that, by Proposition~\ref{p:4D}, $d(-J\eta)=\rho_K^T$, $d\eta = \alpha^T = 0$,  and
$\omega_K^2 \wedge \eta \wedge J\eta$ is a positive volume form on $M$. Using  Stokes Theorem  we get $\lambda =0$, i.e. $\rho_K^T = d\beta^T$.  It then follows that
\begin{equation*}\label{eq:middle} \Scal(g_K^T) \omega_K^T \wedge \omega_K^T = 4 \omega_K^T \wedge \rho_K^T= 4\omega_K^T \wedge d\beta^T.\end{equation*}
Multiplying  the above equality by $\eta\wedge J\eta$ and using that 
$\Scal(g_K^T) dV_{g_K^T} \wedge \eta \wedge J\eta$ is a positive volume form on $M$, we obtain a contradiction:
\[0<\int_M \Scal(g_K^T) \omega_K^T \wedge \omega_K^T \wedge  \eta \wedge J\eta = 4\int_M \omega_K^T \wedge d\beta^T \wedge \eta \wedge J\eta =0, \] 
where for the last equality we integrated by parts and accounted for the degree of basic forms. 
\end{proof}

\begin{prop}\label{p:full-Betti} Let $(M, \gw, J)$ be a non-K\"ahler compact BHE manifold. Then \[h^{1,1}_{\rm BC}(M) \geq b_2^T(M, \mathcal{V}).\] If $\gw$ has constant soliton potential, or if $b_1(M)=0$, we have \[ h^{1,1}_{\rm BC}(M)=b^T_2(M, {\mathcal V}).\]
\end{prop}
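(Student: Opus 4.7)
My plan is to construct an assignment $\Phi : \mathcal H^2_T(M,\mathcal V) \to H^{1,1}_{\rm BC}(M)$ sending a basic harmonic $2$-form to its Bott-Chern class; the general inequality will follow from injectivity of $\Phi$, and the equality under either extra hypothesis will follow from its surjectivity. For well-definedness, Proposition~\ref{p:4D} shows that the transverse geometry is K\"ahler with $\Scal(g_K^T) > 0$, so by the Weitzenbock/maximum-principle argument used in Proposition~\ref{p:Betti}, self-dual basic harmonic $2$-forms are multiples of $\omega_K^T$, and anti-self-dual ones lie in $\Lambda^{1,1}_{\rm prim}$ since on a K\"ahler $4$-manifold $\Lambda^- \subset \Lambda^{1,1}_{\rm prim}$. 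Hence every $\psi \in \mathcal H^2_T$ is a $d$-closed $(1,1)$-form on $M$, and $\Phi([\psi]) := [\psi]_{\rm BC}$ makes sense.

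For injectivity, suppose $\psi = dd^c u$ on $M$. By Proposition~\ref{p:BHEprop} the closure of the $(V, JV)$-flow is a compact torus of holomorphic isometries, so averaging $u$ over it produces a basic function $\tilde u$ with $dd^c \tilde u = \psi$. The key identity is that for any basic function $\tilde u$ and any closed basic $(1,1)$-form $\xi$, Stokes' theorem combined with the computation $d(\eta \wedge J\eta) = F_V \wedge J\eta - \eta \wedge F_{JV}$ and the vanishing of any basic $5$-form on the $4$-dimensional horizontal distribution gives
\[
\int_M dd^c \tilde u \wedge \xi \wedge \eta \wedge J\eta = 0.
\]
Decomposing $\psi = \lambda \omega_K^T + \psi^-$, I apply this identity with $\xi = \omega_K^T$ (which is closed on $M$ because $\omega_K^T = e^{-f}\omega^T$ and $\theta_{g^T} = df$ by Proposition~\ref{P3}) together with $\psi^- \wedge \omega_K^T = 0$ to force $\lambda = 0$; applying it with $\xi = \psi^-$ and using the pointwise identity $\psi^- \wedge \psi^- = -|\psi^-|^2\, dV_{\mathcal H}$ for primitive anti-self-dual $(1,1)$-forms forces $\psi^- = 0$. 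Thus $\psi = 0$ and $\Phi$ is injective.

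For the equality under $f$ constant or $b_1(M) = 0$, my plan is to show that every $d$-closed $(1,1)$-form $\phi$ on $M$ is $dd^c$-cohomologous to a basic one, in two steps. First, the chain-homotopy formula for the $T^2$-action by holomorphic isometries gives $\phi - A(\phi) = d\gamma$ for a real $1$-form $\gamma$ of type $(1,0)+(0,1)$ obtained from fiber integrals of $\iota_V \phi$ and $\iota_{JV} \phi$; since $d\gamma$ is of type $(1,1)$, the components $\gamma^{1,0}, \gamma^{0,1}$ are $\partial$- resp. $\bar\partial$-closed, and writing $\gamma^{0,1} = \bar\partial u$ for a complex $u = a + ib$ with $a, b$ real yields $\gamma = da + d^c b$ and hence $\phi - A(\phi) = dd^c b$. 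Under $b_1(M) = 0$ the closed real $1$-forms $\iota_V A(\phi), \iota_{JV} A(\phi)$ are exact, which supplies the required primitive; under $f = \mathrm{const}$, the Gauduchon condition $d^* \theta = 0$ together with $\Scal(g_K^T) > 0$ yields the analogous $\bar\partial$-Hodge vanishing via a Bochner argument on $(0,1)$-forms. Secondly, any invariant closed $(1,1)$-form decomposes as
\[
A(\phi) = A(\phi)^{\rm bas} + \eta \wedge \beta - J\eta \wedge J\beta + a\, \eta \wedge J\eta,
\]
with $\beta$ a basic real $1$-form and $a$ a basic function; closedness combined with $d\eta = F_V$ and $d(J\eta) = F_{JV}$ forces $d\beta = 0 = dJ\beta$ and a compatibility condition on $a$. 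The basic Hodge theory of the transversely K\"ahler foliation (Proposition~\ref{p:4D}) lets me write $\beta = d\tilde v$ for a basic $\tilde v$, and then an explicit $dd^c$-computation on basic functions built from $\tilde v$ and a primitive of $a$ rewrites the mixed and vertical pieces as $dd^c$-exact forms, giving $[A(\phi)] = [A(\phi)^{\rm bas}]$ in $H^{1,1}_{\rm BC}(M)$.

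The main obstacle will be the first step of the equality, namely converting either the topological hypothesis $b_1(M) = 0$ or the analytic hypothesis $f = \mathrm{const}$ into vanishing of the specific class $[\gamma^{0,1}] \in H^{0,1}_{\bar\partial}(M)$, since $b_1(M) = 0$ alone does not force vanishing of $H^{0,1}_{\bar\partial}$ on a general non-K\"ahler manifold; once invariant representatives are in hand, the remaining reduction from invariant to basic forms is largely a bookkeeping exercise with the explicit $T^2$-bundle structure of $(M, \mathcal V)$.
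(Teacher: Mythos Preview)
Your injectivity argument for the inequality is correct and close to the paper's: both inject $\mathcal H^2_T(M,\mathcal V)$ into $H^{1,1}_{\rm BC}(M)$ via the transversal K\"ahler structure, and your Stokes-type identity $\int_M dd^c\tilde u\wedge\xi\wedge\eta\wedge J\eta=0$ is exactly the kind of computation the paper uses elsewhere. The paper packages this as an exact sequence
\[
0\longrightarrow H^{1,1}_{\rm BC}(M,\mathcal V,\mathbb R)\xrightarrow{\ j\ }H^{1,1}_{\rm BC}(M,\mathbb R)\xrightarrow{\ \iota_{V^{0,1}}\ }H^{1,0}_{\rm BC}(M,\mathcal V),
\]
together with the identification $H^{1,1}_{\rm BC}(M,\mathcal V,\mathbb R)\cong\mathcal H^2_T(M,\mathcal V)$, but the content is the same.

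There is, however, a genuine gap in your surjectivity argument, and you have also misplaced the obstacle. Your Step~1 (general $\to$ invariant) is not an issue at all: since the torus $G$ generated by $V,JV$ acts by biholomorphisms, averaging over $G$ commutes with $d$ and $d^c$ and hence induces the identity on $H^{1,1}_{\rm BC}(M)$; there is no need to solve a $\bar\partial$-equation on $M$, and your worry about $H^{0,1}_{\bar\partial}(M)$ is a red herring. The real difficulty is your Step~2 (invariant $\to$ basic), and specifically the case $f=\mathrm{const}$. First, ``a compatibility condition on $a$'' from closedness only yields $da=0$; the vanishing $a=\psi(V,JV)=0$ requires a separate integration-by-parts argument (the paper does this in general, exploiting that $\omega_K^T\wedge\alpha^T=0$). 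Second, and more seriously, under $f=\mathrm{const}$ you cannot in general write the mixed piece $\beta$ as $d\tilde v$: the basic first cohomology $H^1_T(M,\mathcal V)$ need not vanish. The paper's route here is quite different from a Bochner argument on $(0,1)$-forms of $M$: one applies Bochner to \emph{basic} harmonic $1$-forms using that ${\rm Rc}^{g^T}=F^2\ge 0$ with $\Scal(g^T)=2>0$, which forces any such form to be $\nabla^{g^T}$-parallel and to lie in $\ker{\rm Rc}^{g^T}$. If $H^{1,0}_{\rm BC}(M,\mathcal V)\ne 0$ one deduces $\rho_K^T\wedge\rho_K^T=0$, whence Proposition~\ref{p:4D}(2) forces $\alpha^T=0$, i.e.\ $d\eta=0$; a second integration-by-parts (now using $d\eta=0$) then kills $\iota_{V^{0,1}}\psi$. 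Your proposed Bochner argument on $(0,1)$-forms of the $6$-manifold does not access either $\alpha^T=0$ or this second integration, and without these ingredients the reduction from invariant to basic in the constant-potential case does not go through.
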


\begin{proof}
The Killing vector fields $V, JV$ generate a compact torus $G \subset {\rm Isom}(M, \omega, J)$.  It is well-known that the $G$-averaging map for forms
gives rise to an isomorphism between  the usual deRham complex and the deRham complex for $G$-invariant forms on $M$. As $G$ is the closure of a $\mathbb C$-action which preserves the Hermitian metric and the complex structure, by \cite{Kle}, Proposition~3.1,  the  $G$-averaging map also induces an isomorophism  between the Bott--Chern cohomology  and the $G$-invariant Bott--Chern cohomology. We will thus assume that elements of the $k$-th deRham cohomology groups $H^k(M, {\mathbb R})$ and the Bott--Chern groups $H^{p,q}_{\rm BC}(M, \mathbb{C})$ are represented by $G$-invariant forms.

With this in mind, we are going to define an exact sequence
\begin{equation}\label{eq:ES} 0 \longrightarrow H^{1,1}_{\rm BC}(M, \mathcal{V}, \mathbb{R}) \xrightarrow{\, \, \, \, j \,  \, \, \, } H^{1,1}_{BC}(M, \mathbb{R}) \xrightarrow{\imath_{V^{0,1}}} H_{\rm BC}^{1,0}(M, \mathcal{V})  \end{equation}
as follows. The map $j$ is induced by the embedding from the space of closed basic forms to the space of closed forms on $M$ and the map $\imath_{V^{0,1}}$ is induced from the interior product of a $(1, 1)$ form with the $(0,1)$ vector field $V^{0,1} = \tfrac{1}{2}(V + \sqrt{-1} JV)$: the fact that we consider $G$-invariant forms guarantees $\imath_{V^{0,1}}$ maps $H^{1,1}_{\rm BC}(M, \mathbb{R})$ to $H_{\rm BC}^{1,0}(M, \mathbb{C})$. In order to see that the image of $H^{1,1}_{\rm BC}(M, \mathbb{C})$ under $\imath_{V^{0,1}}$ is in $H^{1,0}_{\rm BC}(M, \mathcal{V})$, we have to show that for any closed real $G$-invariant $(1,1)$-form $\psi$ on $M$,  $\psi(V^{0,1}, V^{1,0})= -\frac{\sqrt{-1}}{2}\psi(V, JV)$ is zero.

As $\imath_{V^{0,1}}\psi$ is  closed (and hence holomorphic) $(1,0)$-form ($\psi$ is closed and $G$-invariant), and as $V^{1,0}$ is a holomorphic vector field, $\psi(V^{0,1}, V^{1,0})$ is a constant. To show that this constant is zero, we use that, by Proposition~\ref{p:4D}, the $1$-forms $\eta$ and $J\eta$ defined in \eqref{eq:eta} satisfy $d(-J\eta)=\rho_K^T$, $d\eta = \alpha^T$  and 
that $\omega_K^T \wedge d(-J\eta) \wedge \eta \wedge J\eta = \frac{\Scal(g_K^T)}{2} dV_{g_K^T} \wedge \eta \wedge J\eta$ is a positive volume form on $M$ whereas $\omega_K^T \wedge d\eta = \omega_K^T \wedge \alpha^T=0$ (as $\alpha^T$ is primitive). We then have 
\[ \begin{split} 
0 &=\imath_{JV} \imath_{V}\left(\psi \wedge \omega_{K}^T \wedge d(-J\eta) \wedge J\eta\right) \\
&= \psi(V, JV) \left(\omega_{K}^T \wedge d(-J\eta) \wedge J\eta\right) + (\imath_{V}\psi) \wedge \omega_K^T\wedge d(-J\eta). 
\end{split}\]
Multiplying the above equality by $\eta$ and integrating over $M$ we get
\[ \begin{split}
&\psi(V, JV)\int_M \omega_{K}^T \wedge d(-J\eta) \wedge \eta \wedge J\eta =\int_M(\imath_{V}\psi) \wedge \omega_K^T\wedge d(-J\eta)\wedge \eta \\
&= \int_M (\imath_V\psi) \wedge \omega_K^T \wedge \alpha^T \wedge J\eta =0,
\end{split}\]
where we have used that $\imath_V \psi$ is closed and integration by part to obtain the second equality, and the fact that $\ga^T$ is primitive to get the final equality.  It thus follows that $\psi(V, JV)=0$, i.e. $\imath_{V^{0,1}}\psi$ is a basic closed $(1,0)$-form.

As any real $G$-invariant $(1,1)$-form on $M$ is in the kernel of $\imath_{V^{0,1}}$ if and only it is $\mathcal{V}$-basic, we obtain the exactness of the maps $j$ and $\imath_{V^{0,1}}$.  Finally, we show that $j$ is injective.  Given a basic $(1,1)$-form $\psi^T$ (which is $G$-invariant by definition) which is $dd^c$-exact on $M$, i.e. $\psi^T = dd^c \varphi$, we can overage $\varphi$ over $G$ to yield that $\psi^T$ is exact in $H^{1,1}_{\rm BC}(M, \mathcal{V})$.  

\smallskip
Now that the exactness of \eqref{eq:ES} is established, we show that 
$H^{1,1}_{\rm BC}(M, \mathcal{V}, \mathbb{R}) \cong \mathcal{H}^2_T(M, \mathcal{V})$. To this end, let  $\psi^T$ be a closed basic $(1,1)$-form.  By the transversal Hodge theorem, $\psi^T = \psi^T_H + d\beta^T$ where $\psi^T_H$ is harmonic with respect to $g_K^T$,  and $\beta^T$ is a basic $1$-form. Any local leaf space $M^T$ is K\"ahler manifold, so the K\"ahler identities for the transversal Hodge Laplacian hold. We thus get from  Proposition~\ref{p:Betti} 
\[ \mathcal{H}_T^+(M, \mathcal{V})\otimes \mathbb{C} = \mathcal{H}^{2,0}_T(M, \mathcal{V}) \oplus \mathcal{H}^{0,2}_T(M, \mathcal{V}) \oplus \mathbb{C}(\omega_K^T) = \mathbb{C}(\omega_K^T),\]
so that 
\[ \mathcal{H}_T^2(M, \mathcal{V})= \mathcal{H}_T^{1,1}(M, \mathcal{V}, \mathbb{R}).\]
It follows that $\psi^T_H$ is a harmonic  basic $(1,1)$-form, and thus the $(0, 1)$-part $(\beta^T)^{0,1}$ of $\beta^T$ is a $\bar\partial$-closed basic form. Using again the K\"ahler identities for the Laplace operator of $g_K^T$,   we have a transversal Hodge decomposition 
\[ (\beta^T)^{0,1} = (\beta^T)_H^{0,1}  + \bar \partial \varphi,  \]
which leads to the identification
\begin{equation}\label{4D-BC} \mathcal{H}^2_{T}(M, \mathcal{V}) = \mathcal{H}_T^{1,1}(M, \mathcal{V}, \mathbb{R}) \cong H^{1,1}_{\rm BC}(M, \mathcal{V}, \mathbb{R}).\end{equation}
From \eqref{eq:ES} and \eqref{4D-BC} we conclude
\[ h^{1,1}_{\rm BC}(M) \geq b_2^T(M, \mathcal{V}).\]

\smallskip
We now show that we have equality in the case when the soliton potential is constant.  To this end, we need to show that the image of $\imath_{V^{0,1}}$ in \eqref{eq:ES} is $0$. This is obvious  if  $H^{1,0}_{\rm BC}(M, \mathcal{V})=0$.  We thus consider the case $H^{1,0}_{\rm BC}(M, \mathcal{V}) \neq 0$.  We shall use the fact that when $f=const$, the transversal Hermitian metric $g^T$ is K\"ahler, so we can take in Proposition~\ref{p:4D} $g_K^T = g^T$ and $\Scal(g^T)= 2$. The Einstein-Maxwell condition of Proposition~\ref{p:EM} now reads
\[ {\rm Rc}^{g^T} = F^2 \geq 0,\]
and ${\rm Rc}^{g^T} \neq 0$ as $\Scal(g^T)=2$. It thus follows by Bochner's formula that for any harmonic basic $1$-form $\beta_H^T$, 
\[ \tfrac{1}{2} \Delta_{g^T}|\beta_H^T|^2= |\nabla^{g^T} \beta_H^T|^2 + {\rm Rc}^{g^T}((\beta_H^T)^{\sharp}, (\beta_H^T)^{\sharp}) \geq 0.\]
By the maximum principle, we conclude that $|\beta_H^T|^2_{g^T}=const$, $\nabla^{g^T}\beta_H^T=0$ and ${\rm Rc}^{g^T}((\beta_H^T)^{\sharp}, \cdot)=0$. The semi-positivity of the Ricci tensor of the K\"ahler metric $g^T=g_K^T$ and $\Scal(g^T)=2$ thus yield  
\[ h^{1,0}_{\rm BC}(M, \mathcal{V}) = 1, \] 
as well as the equality
\begin{equation}\label{eq:+} \beta_H^T \wedge J\beta_H^T \wedge \rho_K^T = \tfrac{1}{2}|\beta_H^T|_{g^T}^2dV_{g_T}.\end{equation}
Furthermore, as $\rho_K^T$ is degenerate, we have $\rho_K^T\wedge \rho_K^T=0$ and therefore Proposition~\ref{p:4D}(2) gives
\begin{equation}\label{eq:0} d\eta = \alpha^T =0.\end{equation}
We now take $\beta = \mathfrak{Re}(\imath_{V^{0,1}}\psi)$ with $\psi$ being a closed $G$-invariant $(1,1)$-form on $M$ and assume $\beta \neq 0$. As $(\imath_{V^{0,1}}\psi)$ is a basic closed $(1,0)$-form on a (locally defined) K\"ahler surface, it is also harmonic with respect to any K\"ahler metric. Thus, we will have by \eqref{eq:+}
\begin{equation}\label{>0} \int_M \beta \wedge J\beta \wedge d(-J\eta) \wedge \eta \wedge J\eta = \tfrac{1}{2}|\beta|^2_{g^T}\int_M dV_{g} >0. \end{equation}
On the other hand, using that $d(-J\eta)=\rho_K^T$ is basic and $\psi(V, JV)=0$ (which we have shown already), we compute
\[ \begin{split}
    0&= \imath_V \imath_{JV}\left(\psi\wedge \psi \wedge d(-J\eta) \wedge J\eta\right) \\
    &= 2 (\imath_V \psi)\wedge (\imath_{JV}\psi)\wedge d(-J\eta) \wedge J\eta  + 2 (\imath_{V}\psi) \wedge \psi \wedge d(-J\eta).
    \end{split}\]
Multiplying with $\eta$  and integrating over $M$ gives 
\[ 
\begin{split} &\int_M \beta \wedge J\beta \wedge d(-J\eta) \wedge \eta \wedge J\eta =\int_M (\imath_V \psi)\wedge (\imath_{JV}\psi)\wedge d(-J\eta) \wedge \eta\wedge J\eta \\
&= \int_M (\imath_{V}\psi) \wedge \psi \wedge d(-J\eta) \wedge \eta =0,\end{split} \]
where for the last equality we have used integration by parts and \eqref{eq:0}. This is a contradiction with \eqref{>0}.
\smallskip
We finally show that $h^{1,1}_{\rm BC}(M)=b_2^T(M, \mathcal{V})$ when 
$b_1(M)=0$. Indeed, in this case we have $b_1^T(M, \mathcal{V})=0$ as the map 
\[ H^{1}_{T}(M, \mathcal{V}) \xrightarrow{\, \, \, \, j \,  \, \, \, } H^{1}(M, \mathbb{R})\]
is injective by a straightforward averaging argument.  This implies that $H^{1,0}_{\rm BC}(M, \mathcal{V})=0$, and hence by \eqref{eq:ES}, $H^{1,1}_{\rm BC}(M, \mathcal{V}, \mathbb{R}) = H^{1,1}_{\rm BC}(M, \mathbb{R})$; we conclude again by \eqref{4D-BC}.
\end{proof}

To end this section, we record the proof of Theorem~\ref{t:mainthm}, which is already contained in the discussion above.

\begin{proof}[Proof of Theorem~\ref{t:mainthm}] Assuming $(M, \gw, J)$ is a compact non-K\"ahler BHE 6-manifold, we rescale $\gw$ so that $|V|=1$ in Proposition~\ref{p:BHEprop}. We can thus apply the conclusions of \S~\ref{sec: EM soliton}. Theorem~\ref{t:mainthm}-(1) is thus established in Proposition~\ref{p:4D}. Theorem~\ref{t:mainthm}-(2) follows from Lemma~\ref{l:fbasic} and Proposition~\ref{p:4D} whereas Theorem~\ref{t:mainthm}-(3) follows from Propositions~\ref{p:Betti} and \ref{p:full-Betti}. \end{proof}

\section{Proof of Corollary~\ref{c:rigiditycor}}

 In this section we complete the proof of Corollary \ref{c:rigiditycor}. Recall that, assuming $(M, \omega, J)$ is a non-K\"ahler BHE $6$-manifold,  the soliton potential is constant iff $(\gw, J)$ is Gauduchon (Lemma~\ref{l:fbasic}), and also in the setup of Proposition~\ref{p:4D}, this happens iff the transversal K\"ahler metric has a positive constant scalar curvature $\Scal(\omega_K^T)$.  Thus, in this case $\rho_K^T = \frac{\Scal(\omega_K^T)}{4}\omega_K - \gamma^T$ for a harmonic anti-self-dual  $2$-form $\gamma^T$. By Propositions~\ref{p:Betti} and \ref{p:full-Betti}, the assumption $h_{\rm BC}^{1,1}(M)=2$  in Corollary~\ref{c:rigiditycor}-(1) is equivalent to 
$b_-^{T}(M, \mathcal{V})=1$,  so that the harmonic anti-self-dual $2$-forms $\alpha^T$ and $\gamma^T$ are linearly dependent.  Similarly, the geometric assumption of Corollary~\ref{c:rigiditycor}-(2) is equivalent to $\Scal(\omega_K)=const$ (as $d^*\theta=0$ see Lemma~\ref{l:fbasic})  and  $\alpha^T=0$ (as $F_V = d\eta = d\theta=0$) whereas the one of Corollary~\ref{c:rigiditycor}-(3) is equivalent to $\Scal(\omega_K)=const$ and $\gamma^T=0$. In each case, $\alpha^T$ and $\gamma^T$ are proportional, and we are going to prove in Lemma~\ref{LP4} below that in this case $\brs{F_V}$ and $\brs{F_{JV}}$ are constant.  The strategy is then to use ideas of \cite{VTA} to show that $F$ is in fact transverse Levi-Civita parallel.  This then will imply that the total space torsion is Bismut-parallel, thus finishing the proof by 
\cite{brienza2024cyt} (cf. also \cite{barbaro2024pluriclosed, ZhengBismutflat})
\subsection{Curvature identities for BHE threefolds}

In this subsection we derive a series of curvature identities for BHE threefolds with trivial soliton potential.  Before beginning our computations we record a lemma deriving a useful frame for computations:

\begin{lemma} \label{l:frame} Let $(M^{2n},g,J)$ be a BHE manifold. For each point $p$, there exists a set of vector fields $\{e_1,...,e_{2n-2}\}$ such that
\begin{enumerate}
    \item $\spn\{e_i\}=\mathcal{H}$,
    \item $Je_{2i-1}=e_{2i},\ 1 \leq i \leq n-1$,
    \item $[e_i,V]=[e_i,JV]=0,$
    \item $[e_i,e_j]=-F_{V}(e_i,e_j)V-F_{J V} (e_i,e_j)JV$.
    \end{enumerate}
\end{lemma}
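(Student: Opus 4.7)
The plan is to construct the $e_i$ as horizontal lifts --- with respect to the principal connection $\mu$ --- of a $J^T$-adapted holomorphic coordinate frame on a local leaf space of the foliation $\mathcal{V}$. Once that is in place, each of (1)--(4) falls out of standard properties of invariant principal connections with essentially no calculation.

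First I would use that $V, JV$ are commuting, pointwise linearly independent Killing vector fields (Proposition~\ref{p:BHEprop}) to obtain a free local $\mathbb R^2$-action on some neighborhood $\mathcal{U}$ of $p$, together with a submersion $\pi \colon \mathcal{U} \to U$ onto a local leaf space whose vertical distribution is $\mathcal{V}$. Since $L_V J = L_{JV} J = 0$ and $\mathcal{V}, \mathcal{H}$ are $J$-invariant (Remark~\ref{r:riem-fol}), $J$ descends to an integrable almost complex structure $J^T$ on $U$. Choosing holomorphic coordinates $z_j = x_{2j-1} + \sqrt{-1}\,x_{2j}$ on $U$ near $\pi(p)$ gives $J^T \partial/\partial x_{2j-1} = \partial/\partial x_{2j}$ throughout $U$. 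I then define $e_i$ on $\mathcal{U}$ to be the unique horizontal vector field with $\pi_* e_i = \partial/\partial x_i$.

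Property (1) is immediate from $\ker \mu = \mathcal{H}$. Property (2) holds because $J|_{\mathcal{H}}$ coincides, via $\pi_*$, with $J^T$, for which $\partial/\partial x_{2i-1} \mapsto \partial/\partial x_{2i}$. Property (3) follows because $L_V \mu = L_{JV} \mu = 0$ implies that the flows of $V$ and $JV$ preserve $\mathcal{H}$, while simultaneously fixing $\pi$; by uniqueness of horizontal lifts, the horizontal lift of a fixed basic vector field on $U$ is therefore $(V, JV)$-invariant, which gives $[V, e_i] = [JV, e_i] = 0$.

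The one genuine step --- which I expect to be the main (though still mild) obstacle --- is Property (4). Because the coordinate vector fields on $U$ commute, $\pi_* [e_i, e_j] = 0$, so $[e_i, e_j]$ is vertical. Applying the Maurer--Cartan identity $d\mu(e_i, e_j) = -\mu([e_i, e_j])$ for horizontal vectors and substituting $\mu = \eta V + (J\eta) JV$, with $V, JV$ treated as constants in $\mathfrak t^2$ and $d\eta = F_V$, $d(J\eta) = F_{JV}$, yields the claimed formula
\[ [e_i, e_j] = -F_V(e_i, e_j) V - F_{JV}(e_i, e_j) JV. \]
The point here is that it is essential to lift \emph{commuting} vector fields from the base (coordinate vector fields will do) --- for a generic $J^T$-adapted horizontal frame the bracket would also acquire a horizontal piece coming from $[\pi_* e_i, \pi_* e_j]$, and (4) would fail.
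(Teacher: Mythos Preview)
Your proof is correct and follows essentially the same approach as the paper. The only cosmetic difference is that the paper chooses holomorphic coordinates directly on $M$ (with $\partial_{x_n}=V$, $\partial_{y_n}=JV$) and then projects the remaining coordinate vectors to $\mathcal H$ via $e_i := \partial_{x_i} - \eta(\partial_{x_i})V - J\eta(\partial_{x_i})JV$, whereas you pick holomorphic coordinates on the leaf space and horizontally lift; these produce the same frame, and the verifications of (1)--(4) coincide.
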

\begin{proof} The construction of the frame is a straightforward generalization of (\cite{Streetssolitons} Lemma 5.7), which we briefly sketch.  Fix $p \in M$ and a chosen local complex coordinates $z_j = x_j + \i y_j$ so that $\del_{x_n}(p) = V, \del_{y_n}(p) = JV$, and the vectors $\{\del_{x_1}, \del_{y_1}, \dots, \del_{x_{n-1}}, \del_{y_{n-1}}\}$ span $\mathcal H$ at $p$.  Now let
\begin{align*}
    e_{2i - 1} =&\ \del_{x_i} - \eta(\del_{x_i}) V - J \eta (\del_{x_i}) JV\\
    e_{2i} =&\ \del_{y_i} - \eta(\del_{y_i}) V - J \eta(\del_{y_i}) JV.
\end{align*}
These clearly span $\mathcal H$, and using the properties of $\mu$ and $F$ properties (2)-(4) follow easily.
\end{proof}

\begin{rmk} \label{r:ON} If we scale the metric $g$ so that $\brs{V} = \brs{JV} = 1$,   then furthermore in the construction of Lemma \ref{l:frame} we can assume that the vectors $\{e_1,\dots,e_{2n-2}\}$ are an orthonormal basis for the horizontal space at the chosen point $p$, so that the whole adapted frame is orthonormal at $p$.  
\end{rmk}

Next, to further analyze the torsion, let us compute the Levi-Civita covariant derivative of $H$. 

\begin{lemma}\label{L5}
Let $(M^{6},g,J,H)$ be a compact BHE manifold with trivial soliton potential.  The nonvanishing components of the Levi-Civita covariant derivative of $H$ are \begin{enumerate}
    \item $(\nabla_i H)_{\alpha kl}=(\nabla_iF_V)_{kl},$ $(\nabla_i H)_{\beta kl}=(\nabla_iF_{JV})_{kl}$.
    \item $(\nabla_iH)_{\alpha\beta j}= -\tfrac{1}{2}R^B_{\alpha\beta ij}$.
    \item $(\nabla_\alpha H)_{\beta ij}= - (\nabla_\beta H)_{\alpha ij}= \tfrac{1}{2}R^B_{\alpha\beta ij}$.
\end{enumerate}
\end{lemma}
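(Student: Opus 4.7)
My approach is a direct component calculation in the adapted orthonormal frame $\{e_1,\dots,e_{2n-2},V,JV\}$ guaranteed by Lemma~\ref{l:frame} (orthonormal at the base point as in Remark~\ref{r:ON}), using the Christoffel data of Lemma~\ref{Ch}. The first step will be to exploit the hypothesis of constant soliton potential to show $H^T=0$: by Lemma~\ref{l:fbasic} the function $f$ is constant, so Proposition~\ref{P3}(1) gives $\theta_{g^T}=df=0$, hence $g^T$ is balanced. Since a balanced Hermitian metric in real dimension four is K\"ahler, this forces $d\omega^T=0$ and therefore $H^T=-d^c\omega^T=0$. Combined with $JF_V=F_V$, $JF_{JV}=F_{JV}$ (as these are $(1,1)$-forms), formula \eqref{eq: torsion form} reduces to $H=F_V\wedge\eta+F_{JV}\wedge J\eta$, so the only non-vanishing components of $H$ in the adapted frame are (up to antisymmetric rearrangement) $H_{ij\alpha}=F_{ij\alpha}$ and $H_{ij\beta}=F_{ij\beta}$.

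For item (1), I would expand $(\nabla_iH)_{\alpha kl}$ and substitute the Christoffels from Lemma~\ref{Ch}: the term $\Gamma^g_{i\alpha m}H_{mkl}$ drops because $H^T=0$, while the remaining Christoffel contributions involve only $\Gamma^g_{ikm}=\Gamma^{g^T}_{ikm}$ and reassemble exactly into the transversal Levi-Civita derivative $(\nabla^{g^T}_iF_V)_{kl}$. Since $F_V$ is $\mathcal V$-basic, this coincides with $(\nabla_iF_V)_{kl}$. The $\beta$ case is identical.

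For item (2), $H_{\alpha\beta j}=0$ kills the $e_i(H_{\alpha\beta j})$ piece as well as the term $\Gamma^g_{ij}{}^E H_{\alpha\beta E}$, leaving
\[
(\nabla_iH)_{\alpha\beta j}=-\Gamma^g_{i\alpha m}H_{m\beta j}-\Gamma^g_{i\beta m}H_{\alpha mj}=\tfrac12 F_{im\alpha}F_{mj\beta}-\tfrac12 F_{im\beta}F_{mj\alpha}.
\]
Rewriting using antisymmetry of $F_V,F_{JV}$ and comparing with the expression for $R^B_{\alpha\beta ij}$ in Lemma~\ref{L4}(2) yields $-\tfrac12R^B_{\alpha\beta ij}$.

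For item (3), the frame properties $[V,e_i]=[JV,e_i]=0$ force $\Gamma^g_{\alpha im}=\Gamma^g_{i\alpha m}=\tfrac12 F_{im\alpha}$ (and similarly for $\beta$) and make $e_\alpha(H_{\beta ij})=V(F_{JV}(e_i,e_j))$ vanish because $F_{JV}$ is basic. Further, $\Gamma^g_{\alpha\beta E}=0$ since $\nabla^g_V JV=\nabla^B_V JV-\tfrac12H(V,JV,\cdot)^\sharp=0$ using $H_{\alpha\beta*}=0$. The two surviving Christoffel terms then combine to $\tfrac12R^B_{\alpha\beta ij}$ by the same index manipulation as in item (2), and the second equality $(\nabla_\alpha H)_{\beta ij}=-(\nabla_\beta H)_{\alpha ij}$ is then the antisymmetry $R^B_{\alpha\beta ij}=-R^B_{\beta\alpha ij}$. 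The only real obstacle is bookkeeping: tracking cyclic identifications such as $H_{\alpha kl}=H_{kl\alpha}=F_{kl\alpha}$, the various antisymmetries of $H$ and $F_V,F_{JV}$, and matching the index conventions for $R^B_{\alpha\beta ij}$ set in Lemma~\ref{L4}(2). With these in hand every formula follows by substitution.
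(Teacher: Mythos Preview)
Your approach is essentially the same as the paper's: both are direct component computations in the adapted frame using the Christoffel table of Lemma~\ref{Ch}, and your explicit observation that $H^T=0$ (from $f$ constant $\Rightarrow$ $\theta_{g^T}=0$ $\Rightarrow$ $g^T$ K\"ahler in real dimension four) is exactly what the paper uses implicitly. Your calculations for items (1)--(3) are correct.

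There is, however, one omission. The lemma asserts that items (1)--(3) are the \emph{only} nonvanishing components of $\nabla H$, so you must also verify $(\nabla_i H)_{jkl}=0$. This does not follow immediately from $H_{jkl}=H^T_{jkl}=0$: the Christoffel corrections $-\Gamma^g_{ij\alpha}H_{\alpha kl}-\Gamma^g_{ij\beta}H_{\beta kl}$ (and the analogous terms for the other two slots) survive and assemble into $\tfrac{1}{2}(F_V\wedge F_V+F_{JV}\wedge F_{JV})_{ijkl}$. This vanishes only after invoking Proposition~\ref{P3}(3) together with $dd^c\omega^T=0$ (which you have, since $\omega^T$ is K\"ahler). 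The paper carries out this computation explicitly; you should at least note that this case reduces to $F_V\wedge F_V+F_{JV}\wedge F_{JV}=0$ and cite Proposition~\ref{P3}(3). The remaining ``other'' components, such as $(\nabla_\alpha H)_{ijk}$ or $(\nabla_\alpha H)_{\alpha ij}$, do vanish routinely from the Christoffel table and $H^T=0$, as the paper also remarks without detailed computation.
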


\begin{proof}
By \Cref{Ch}, one immediately sees that the terms in items (1)-(3) are the only potentially non-trivial cases, except perhaps $(\N_i H)_{jkl}$.  However, using \Cref{P3}, we see
\begin{align*}
    (\nabla_iH)_{jkl}&=e_i(H_{jkl})-H(\nabla_{e_i}e_j,e_k,e_l)-H(e_j,\nabla_{e_i}e_k,e_l)-H(e_j,e_k,\nabla_{e_i}e_l)
    \\&=\tfrac{1}{2}\big( F_{ij\alpha}F_{kl\alpha}+F_{jk\alpha}F_{il\alpha}-F_{jl\alpha}F_{ik\alpha}  \big)+\tfrac{1}{2}\big( F_{ij\beta}F_{kl\beta}+F_{jk\beta}F_{il\beta}-F_{jl\beta}F_{ik\beta}  \big)
    \\&=\tfrac{1}{2}\big( (F_V\wedge F_V)_{ijkl}+(F_{JV}\wedge F_{JV})_{ijkl} \big),
    \\&= 0.
\end{align*}
Next we compute
\begin{align*}
    (\nabla_i H)_{\alpha kl}&=e_i\big( H_{\alpha kl}  \big)-H(\nabla_{e_i}e_{\alpha},e_k,e_l)-H(e_{\alpha},\nabla_{e_i}e_k,e_l)-H(e_{\alpha},e_k,\nabla_{e_i}e_l)
    \\&=e_i\big( (F_V)_{kl} \big)-\Gamma^g_{ikm}(F_V)_{ml}-\Gamma^g_{ilm}(F_V)_{km}
    \\&= (\nabla_iF_V)_{kl},
\end{align*}
\begin{align*}
     (\nabla_iH)_{\alpha\beta j}&=e_i(H_{\alpha\beta j})-H(\nabla_{e_i}e_\alpha,e_\beta,e_j)-H(e_\alpha,\nabla_{e_i}e_\beta,e_j)-H(e_\alpha,e_\beta,\nabla_{e_i}e_j)
     \\&=-\tfrac{1}{2}F_{il\alpha}F_{jl\beta}-\tfrac{1}{2}F_{il\beta}F_{lj\alpha}
     \\&=-\tfrac{1}{2}R^B_{\alpha\beta ij},
\end{align*}
\begin{align*}
    (\nabla_\alpha H)_{\beta ij}&=e_{\alpha}(H_{\beta ij})-H(\nabla_{e_\alpha}e_\beta,e_i,e_j)-H(e_\beta,\nabla_{e_\alpha}e_i,e_j)-H(e_\beta,e_i,\nabla_{e_\alpha}e_j)
     \\&=\tfrac{1}{2}F_{il\alpha}F_{jl\beta}-\tfrac{1}{2}F_{jl\alpha}F_{il\beta}
     \\&=\tfrac{1}{2}R^B_{\alpha\beta ij}.
\end{align*}
\end{proof}

Next, we compute the components of Riemannian curvature.
\begin{lemma}\label{L6}
Let $(M^{6},g,J,H)$ be a compact BHE manifold with trivial soliton potential.  The Riemann curvature satisfies 
\begin{enumerate}
    \item $R_{\alpha ijk}=\tfrac{1}{2}R^B_{\alpha ijk}=-\tfrac{1}{2}(\nabla_i F_V)_{jk}$, $R_{\beta ijk}=\tfrac{1}{2}R^B_{\beta ijk}=-\tfrac{1}{2}(\nabla_i F_{JV})_{jk}$.
    \item $R_{\alpha\beta ij}=\tfrac{1}{4}R^B_{\alpha\beta ij}$.
    \item $R_{\alpha ij\beta}=\tfrac{1}{4}F_{jk\alpha}F_{ik\beta}$.
\end{enumerate}

\end{lemma}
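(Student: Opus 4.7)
The plan is to invert the standard identity relating Bismut and Levi-Civita curvatures,
\[ R^B_{ABCD} = R_{ABCD} + \tfrac{1}{2}(\nabla_A H)_{BCD} - \tfrac{1}{2}(\nabla_B H)_{ACD} + \tfrac{1}{4}\bigl( H_{ArD} H_{BCr} - H_{BrD} H_{ACr}\bigr), \]
which follows from $\nabla^B = \nabla + \tfrac{1}{2} H^\sharp$ by a direct expansion of $R^B(X,Y)Z$, and then feed in the expressions for $R^B$ and $\nabla H$ from Lemmas~\ref{L4} and~\ref{L5}. The essential simplification is that, when $f$ is constant, Proposition~\ref{p:4D} yields $H^T = Jdf \wedge \omega^T = 0$; hence in the adapted frame of Lemma~\ref{l:frame} the only nonvanishing components of $H$ are the mixed ones $H_{ij\alpha}=F_{ij\alpha}$, $H_{ij\beta}=F_{ij\beta}$, together with their cyclic shifts.

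For item (1), substitute $(A,B,C,D)=(\alpha,i,j,k)$. Every quadratic $H\cdot H$ contraction requires a fully horizontal component of $H$ in one of its factors and therefore vanishes. The derivative $(\nabla_\alpha H)_{ijk}$ also vanishes, by a brief computation using $\nabla_X V = -\tfrac{1}{2}H(X,V,\cdot)^\sharp$ (obtained from $\nabla^B V = 0$) combined with $H^T=0$. The only surviving term is $(\nabla_i H)_{\alpha jk}=(\nabla_i F_V)_{jk}$ from Lemma~\ref{L5}(1). Combined with $R^B_{\alpha ijk}=-(\nabla^B_i F_V)_{jk}$ from Lemma~\ref{L4}(3), together with the observation that $\nabla^B F_V$ and $\nabla F_V$ agree on horizontal vectors (the connection correction $A(e_i,\cdot)$ takes values in $\mathcal V$, while the basic form $F_V$ annihilates $\mathcal V$), this yields $R_{\alpha ijk}=\tfrac{1}{2}R^B_{\alpha ijk}=-\tfrac{1}{2}(\nabla_i F_V)_{jk}$, and the $\beta$-case is identical.

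For item (2), substitute $(A,B,C,D)=(\alpha,\beta,i,j)$. Lemma~\ref{L5}(3) gives $(\nabla_\alpha H)_{\beta ij}=-(\nabla_\beta H)_{\alpha ij}=\tfrac{1}{2}R^B_{\alpha\beta ij}$, so the derivative terms contribute $\tfrac{1}{2}R^B_{\alpha\beta ij}$ to the right-hand side. The quadratic terms simplify, after using the antisymmetries of $F_V$ and $F_{JV}$ in their horizontal indices, to $\tfrac{1}{4}R^B_{\alpha\beta ij}$ via the formula of Lemma~\ref{L4}(2). Solving the identity gives $R_{\alpha\beta ij}=\tfrac{1}{4}R^B_{\alpha\beta ij}$. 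For item (3), substitute $(A,B,C,D)=(\alpha,i,j,\beta)$; the left-hand side $R^B_{\alpha ij\beta}$ vanishes because $JV$ is Bismut-parallel and $R^B$ is antisymmetric in its last two slots. The derivative term $(\nabla_i H)_{\alpha j\beta}=\tfrac{1}{2}R^B_{\alpha\beta ij}$ by Lemma~\ref{L5}(2) combined with the antisymmetry of $H$, while $(\nabla_\alpha H)_{ij\beta}=\tfrac{1}{2}R^B_{\alpha\beta ij}$ by Lemma~\ref{L5}(3) after a cyclic shift of $3$-form indices; these cancel in the identity. Since $H_{\alpha r\beta}=0$, only one quadratic term survives and evaluates directly to $\tfrac{1}{4}F_{jk\alpha}F_{ik\beta}$.

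The main obstacle is purely combinatorial bookkeeping---tracking which components of $H$ vanish, the cyclic and antisymmetric permutations of its indices, and how the derivative identities of Lemma~\ref{L5} reorganize under such permutations so that the right terms cancel. No new ideas beyond the two preceding lemmas are needed; the computation is entirely mechanical once the general identity is in hand and the reduction $H^T = 0$ is invoked.
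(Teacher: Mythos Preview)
Your proposal is correct and follows essentially the same approach as the paper: both invert the standard identity $R^B = R + \tfrac{1}{2}\nabla H - \tfrac{1}{2}\nabla H + \tfrac{1}{4}H\cdot H$ and substitute the components of $R^B$ and $\nabla H$ from Lemmas~\ref{L4} and~\ref{L5}, using $H^T=0$ to kill the purely horizontal $H$-contractions. Your write-up is slightly more explicit in justifying auxiliary vanishing (e.g.\ $(\nabla_\alpha H)_{ijk}=0$ and $\nabla^B F_V = \nabla F_V$ on horizontal vectors), but the computations and the use of the preceding lemmas are otherwise identical to the paper's proof.
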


\begin{proof}
  From the definition of Bismut curvature, we have
  \begin{align*}
      R^B(X,Y,Z,W)=& \kern0.2em R(X,Y,Z,W)+\tfrac{1}{2}(\nabla_XH)(Y,Z,W)-\tfrac{1}{2}(\nabla_YH)(X,Z,W)
    \\&-\tfrac{1}{4}\langle H(X,W),H(Y,Z)\rangle+\tfrac{1}{4}\langle H(Y,W),H(X,Z)\rangle. 
  \end{align*}
Then, we compute 
\begin{align*}
    R^B_{\alpha ijk}&=R_{\alpha ijk}+\tfrac{1}{2}(\nabla_{\alpha}H)_{ijk}-\tfrac{1}{2}(\nabla_iH)_{\alpha jk}-\tfrac{1}{4}H_{\alpha k E}H_{ij E}+\tfrac{1}{4}H_{\alpha jE}H_{ik E}
    \\&=R_{\alpha ijk}+\tfrac{1}{2}R^B_{\alpha ijk},
\end{align*}
\begin{align*}
    R^B_{\alpha\beta ij}&=R_{\alpha\beta ij}+\tfrac{1}{2}(\nabla_{\alpha}H)_{\beta ij}-\tfrac{1}{2}(\nabla_\beta H)_{\alpha ij}-\tfrac{1}{4}H_{\alpha j E}H_{\beta i E}+\tfrac{1}{4}H_{\alpha iE}H_{\beta j E}
    \\&=R_{\alpha\beta ij}+\tfrac{1}{2} R^B_{\alpha\beta ij}+\tfrac{1}{4} R^B_{\alpha\beta ij},
\end{align*}  
\begin{align*}
    0 = R^B_{\alpha ij\beta}&=R_{\alpha ij\beta}+\tfrac{1}{2}(\nabla_{\alpha}H)_{ij\beta}-\tfrac{1}{2}(\nabla_i H)_{\alpha j\beta}-\tfrac{1}{4}H_{\alpha \beta E}H_{ij E}+\tfrac{1}{4}H_{\alpha jE}H_{i\beta  E}
    \\&=R_{\alpha ij\beta}-\tfrac{1}{4}F_{jk\alpha}F_{ik\beta},
\end{align*}
where we used results from \Cref{L4} and \Cref{L5}.
\end{proof}

Using \Cref{L6} we derive a further vanishing result.
\begin{lemma}\label{L7}
   Let $(M^{6},g,J,H)$ be a  non-K\"ahler BHE manifold with trivial soliton potential and suppose that the harmonic anti-self-dual   $2$-forms $\alpha^T$  and  $\gamma^T:= \frac{\Scal(\omega_K)}{4} \omega^T_K- \rho_K^T$ in Proposition~\ref{p:4D} are linearly dependent at each point.  Then
   \begin{align*}
       R_{\alpha\beta ij}=0.
   \end{align*}
\end{lemma}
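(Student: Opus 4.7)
The plan is to recognize $R_{\alpha\beta ij}$ as (up to scale) the commutator of $F_V$ and $F_{JV}$ viewed as skew-symmetric endomorphisms of the horizontal distribution, and then to show this commutator vanishes pointwise. By \Cref{L6}(2) we have $R_{\alpha\beta ij} = \tfrac{1}{4} R^B_{\alpha\beta ij}$, and \Cref{L4}(2) gives $R^B_{\alpha\beta ij} = F_{ik\alpha} F_{jk\beta} - F_{ik\beta} F_{jk\alpha}$. Working in the adapted orthonormal frame of \Cref{l:frame} (cf. Remark~\ref{r:ON}), and writing $A$ and $B$ for the skew-symmetric matrices with entries $A_{ij} = (F_V)_{ij}$, $B_{ij} = (F_{JV})_{ij}$, the identities $A^T = -A$, $B^T = -B$ turn this expression into the matrix commutator $[B,A]_{ij}$. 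Hence it suffices to prove that $A$ and $B$ commute as endomorphisms at every point.

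Since the soliton potential is trivial, \Cref{p:4D} applies with $g^T = g_K^T$ and $\Scal(g^T)=2$, so that $F_V = \alpha^T$ and $F_{JV} = -\rho_K^T = \gamma^T - \tfrac{1}{2}\omega^T$, with $\alpha^T$ and $\gamma^T$ harmonic primitive anti-self-dual $(1,1)$-forms on the transversal K\"ahler surface. Because $\omega^T$ corresponds to $J^T$ as an endomorphism, and any $(1,1)$-form (in particular $\alpha^T$) commutes with $J^T$ when viewed as an endomorphism, one has $[\alpha^T, \omega^T] = 0$. Consequently,
\[ [A,B] \;=\; [\alpha^T,\, \gamma^T - \tfrac{1}{2}\omega^T] \;=\; [\alpha^T, \gamma^T] \]
as endomorphisms.

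Finally, the standing hypothesis that $\alpha^T$ and $\gamma^T$ are linearly dependent at every point means that pointwise one is a scalar multiple of the other, so they commute as matrices at every point. (The underlying observation is that the space of primitive anti-self-dual $(1,1)$-forms on a K\"ahler surface is a $3$-dimensional Lie algebra isomorphic to $\mathfrak{su}(2)$ under the commutator, in which two elements commute if and only if they are proportional.) Hence $[\alpha^T, \gamma^T] = 0$ pointwise, so $R^B_{\alpha\beta ij} = 0$, and \Cref{L6}(2) yields $R_{\alpha\beta ij} = 0$. The argument is essentially pointwise linear algebra once the commutator structure is in place, so the only conceptual step is the identification of $F_{ik\alpha} F_{jk\beta} - F_{ik\beta}F_{jk\alpha}$ with $[B,A]_{ij}$ and the observation that $\alpha^T$ commutes with $J^T$.
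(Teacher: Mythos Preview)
Your proof is correct and follows essentially the same route as the paper's: both invoke \Cref{L6}(2) and \Cref{L4}(2) to identify $R_{\alpha\beta ij}$ with the commutator of $F_V$ and $F_{JV}$ as endomorphisms, drop the $\omega^T$-term because $(1,1)$-forms commute with $J^T$, and conclude from the pointwise linear dependence of $\alpha^T$ and $\gamma^T$. The paper compresses this into one displayed line, whereas you spell out each step; the content is the same.
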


\begin{proof} Assuming $f=const$, we know that $\omega^T=\frac{\Scal(\omega_K)}{2}g_K$ is K\"ahler,  and then  \[ F_{JV}=-\rho_K^T =-\tfrac{1}{2} \omega_T + \gamma^T,\]  where $\gamma^T$ is a (harmonic) anti-self-dual $2$-form.  Our assumption is that $\alpha^T$ and $\gamma^T$ are linearly dependent at any point. By Lemma~\ref{L6}(2) and Lemma~\ref{L4}(2) we have
\[R(V, JV) = g^T[F_V\, (g^T)^{-1}, F_{JV} \,  (g^T)^{-1}]= g^T[\alpha^T \,  (g^T)^{-1}, \gamma^T \, (g^T)^{-1}] =0,\]
as required.
\end{proof}

\begin{lemma} \label{LP4}
 Under the assumptions of Lemma~\ref{L7},   $|F_V|$ and $|F_{JV}|$ are both constant.
\end{lemma}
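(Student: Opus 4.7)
The plan is to reduce the lemma to showing that $|\alpha^T|^2$ and $|\gamma^T|^2$ are separately constant, and then to extract this constancy from a single algebraic integral constraint on $|\alpha^T|^2+|\gamma^T|^2$ combined with the pointwise collinearity hypothesis.  Since $F_V=\alpha^T$ and $F_{JV}=-\tfrac{1}{2}\omega^T+\gamma^T$ with $\gamma^T\perp\omega^T$, one has $|F_V|^2=|\alpha^T|^2$ and $|F_{JV}|^2=1+|\gamma^T|^2$, so this reduction is immediate.

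First, I would extract the algebraic constraint.  The hypothesis that $f$ is constant, together with Proposition~\ref{p:4D}, gives $H^T\equiv 0$, $g^T$ K\"ahler, and $\Scal(\omega_K^T)$ a positive constant.  The dilaton identity \eqref{Con} then collapses to $|F|^2=\mbox{const}$, which by the above decomposition is equivalent to
\[ |\alpha^T|^2+|\gamma^T|^2=\mbox{const}. \]
(The same relation could alternatively be derived by integrating the anomaly identity of Proposition~\ref{p:4D}(2), whose left-hand side vanishes once $\Scal(\omega_K^T)$ is constant, using $\rho_K^T=\tfrac{\Scal(\omega_K^T)}{4}\omega_K^T-\gamma^T$ and anti-self-duality of $\alpha^T$ and $\gamma^T$.)

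Second, I would use closedness together with pointwise collinearity to pin down the proportionality factor.  Both $\alpha^T$ and $\gamma^T$ are closed primitive $(1,1)$-forms on the transversal K\"ahler structure, hence anti-self-dual.  If either one vanishes identically, the claim follows from the constraint above, so assume $\alpha^T\not\equiv 0$.  On the open set $U=\{\alpha^T\neq 0\}$ the collinearity assumption produces a smooth function $\lambda$ with $\gamma^T=\lambda\alpha^T$, and differentiating together with $d\alpha^T=d\gamma^T=0$ yields $d\lambda\wedge\alpha^T=0$ on $U$.  Since any non-zero anti-self-dual $2$-form on an oriented $4$-dimensional inner product space has maximal rank (a null vector would also be a null vector of its Hodge dual), the identity $v^{\flat}\wedge \alpha^T=-*(\imath_v\alpha^T)$ shows the map $v\mapsto v^{\flat}\wedge\alpha^T$ is injective on $U$; hence $d\lambda\equiv 0$ on $U$ and $\lambda$ is locally constant.

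The main obstacle, and final step, is upgrading $\lambda$ from locally to globally constant in the possible presence of zeros of $\alpha^T$.  Here the constraint $(1+\lambda^2)|\alpha^T|^2=C$ is decisive: if $\alpha^T$ vanished at some $p\in M$ lying in the closure of a component $U_0$ of $U$ on which $\lambda\equiv c$, then $|\alpha^T|^2\equiv C/(1+c^2)>0$ on $U_0$, contradicting continuity of $|\alpha^T|^2$ at $p$.  Therefore $\alpha^T$ is nowhere zero, $U=M$ is connected, and $\lambda$ is a global constant.  Consequently $|\alpha^T|^2=C/(1+\lambda^2)$ and $|\gamma^T|^2=\lambda^2|\alpha^T|^2$ are both constant, completing the proof.
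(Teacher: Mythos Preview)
Your proof is correct and follows the same overall strategy as the paper: obtain the algebraic constraint $|\alpha^T|^2+|\gamma^T|^2=\mathrm{const}$, then use closedness plus non-degeneracy of a nonzero anti-self-dual $2$-form in four dimensions to force the proportionality factor between $\alpha^T$ and $\gamma^T$ to be locally constant.

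The one substantive difference is how you handle the potential zero locus.  The paper writes $\alpha^T=\lambda\gamma^T$ on $\{\gamma^T\neq 0\}$ and invokes unique continuation for harmonic forms (a nontrivial harmonic form cannot vanish on an open set) to conclude this set is dense, then extends $\lambda$ by continuity.  You instead write $\gamma^T=\lambda\alpha^T$ on $\{\alpha^T\neq 0\}$ and use the algebraic constraint $(1+\lambda^2)|\alpha^T|^2=C>0$ to show directly that $\alpha^T$ is nowhere zero, so the open set is all of $M$.  Your route is self-contained and avoids appealing to unique continuation; the paper's route is slightly shorter but imports an outside analytic fact.  Both derivations of the constraint (yours via the dilaton identity \eqref{Con}, the paper's via \eqref{eq:laplace}) are equivalent once $f$ is constant.
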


\begin{proof}
  We have by Proposition~\ref{P3}
\begin{align}\label{eq:a}
    F_V=\alpha^T \qquad F_{JV}=- \tfrac{1}{2}\omega^T+\gamma^T,
\end{align}
where $\alpha^T, \gamma^T$ are both harmonic and anti-self-dual $2$-forms on $(M^T, g^T)$, using that $f$ is constant.  Note that $\alpha^T$ and $\gamma^T$ cannot be both identically zero as by Proposition~\ref{P3}-(3) we have \begin{equation}\label{eq:b}
|\gamma^T|^2 + |\alpha^T|^2  \equiv  1.
\end{equation} 
The proof now breaks into two cases:
\vskip 0.1in

\noindent \textbf{Case 1:} $\gamma^T\equiv 0$ around a point.  In this case \eqref{eq:a} and \eqref{eq:b} yield 
\begin{align*}
    |F_V|^2 = |\alpha^T|^2 = 1 = 1+ |\gamma^T|^2=|F_{JV}|^2,
\end{align*}
so the statement holds around that point.

\vskip 0.1in

\noindent \textbf{Case 2:} $\gg^T \not \equiv   0$ around a point.  Working on a local leaf space $M^T$ around such a point, and using that a non-trivial harmonic form cannot be zero on an open subset,
we know that $\gamma^T \neq 0$ on a dense open subset $U$ of $M^T$. Over $U$, we can write 
$\alpha^T = \lambda \gamma^T$ for a smooth function $\lambda$. As $\alpha^T$ and $\gamma^T$ are both closed, we have $d\lambda \wedge \gamma^T=0$.
As an anti-self-dual $2$-form satisfies $\gamma^T \wedge \gamma^T = -\frac{|\gamma^T|^2}{2}dV_{g^T}$, we know that $\gamma^T$ is non-degenerate over $U$, and thus $\lambda=const$ on $U$, i.e. $\alpha^T = \lambda \gamma^T$ on $U$ with $\lambda$ constant. This relation extends by continuity to $M^T$ as $U$ is dense.  It now follows  by \eqref{eq:a} and \eqref{eq:b} that
\begin{align*}
    1= &|\alpha^T|^2+|\gamma^T|^2=(1+\lambda^2)|\gamma^T|^2.
\end{align*}
Thus, $|\gamma^T|^2$  and $|\alpha^T|^2 = \lambda^2 |\gamma^T|^2$ are constant, and therefore  $\brs{F_V}^2=|\alpha^T|^2$ and $\brs{F_{JV}}^2= 1 + |\gamma^T|^2$ are constant too. \end{proof}

\subsection{Parallel torsion}

We give here a general proposition about the Bismut covariant derivative of $H$ applying in all dimensions.

\begin{prop}\label{prop:cov deriv} 
    Let $(M^{2n},J,g)$ be a compact BHE manifold.  If $H$ is Bismut parallel then $f$ is constant and $F_{V}$ and $F_{JV}$ are parallel with respect to the Bismut connection.  In the case $n=3$, if $F_V$ and $F_{JV}$ are parallel with respect to the transverse Levi-Civita connection and $f$ is constant, then $H$ is Bismut parallel.
\end{prop}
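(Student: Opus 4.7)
The plan is to prove the two implications separately, with the first (forward) direction following cleanly from contracting $\nabla^B H = 0$ against the Bismut-parallel vector fields $V, JV$, and the second (converse) requiring a direction-by-direction analysis of $\nabla^B$ acting on the torsion components.

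For the forward direction, I would begin by extracting $F_V$ and $F_{JV}$ as contractions of $H$. The torsion decomposition~\eqref{eq: torsion form}, combined with Lemma~\ref{L3} (the $(1,1)$-property of $F_V, F_{JV}$), the identities $\eta(V) = 1, J\eta(V) = 0$, and the fact that $H^T$ is basic, gives
\[
\iota_V H = F_V, \qquad \iota_{JV} H = F_{JV}.
\]
Since $\nabla^B V = \nabla^B(JV) = 0$ by Proposition~\ref{p:BHEprop}, the hypothesis $\nabla^B H = 0$ yields $\nabla^B F_V = \iota_V \nabla^B H = 0$ and similarly $\nabla^B F_{JV} = 0$. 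For the constancy of $f$, the metric property of $\nabla^B$ makes $|H|^2$ constant, and the total-space dilaton identity $\frac{1}{6}|H|^2 + \Delta f - |\nabla f|^2 = \text{const}$ (derived in the proof of Proposition~\ref{p:BHEprop}) reduces to $\Delta f - |\nabla f|^2 = c$ for some constant $c$. Setting $u = e^{-f}$ converts this to $\Delta u = -cu$; integrating over the compact manifold $M$ with $u > 0$ forces $c = 0$, so $u$ is harmonic and hence constant.

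For the converse in dimension three, the plan is to expand $\nabla^B H$ directly. With $f$ constant, Proposition~\ref{p:4D} gives $H^T = Jdf \wedge \omega^T = 0$, hence $H = F_V \wedge \eta + F_{JV} \wedge J\eta$. Combined with $\nabla^B \eta = 0$ (Proposition~\ref{P1} with $\rho_B = 0$) and $\nabla^B(J\eta) = 0$, the Leibniz rule reduces the task to $\nabla^B F_V = \nabla^B F_{JV} = 0$. The calculation splits by direction. On horizontal $e_i$, Lemma~\ref{Ch} with $H^T = 0$ shows that $\Gamma^B_{ij\alpha} = \Gamma^B_{ij\beta} = 0$, so $\nabla^B_{e_i}$ on horizontal tensors coincides with the transverse Levi-Civita derivative and $\nabla^B_{e_i} F_V = \nabla^{g^T}_{e_i} F_V = 0$ by hypothesis. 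For $X = V$, the formula $\nabla^B_V e_i = F_V(e_i, \cdot)^{\sharp}$ from Lemma~\ref{Ch}, together with the antisymmetry of $F_V$ (so that the matrix $F_V^2$ is symmetric), makes $\nabla^B_V F_V$ vanish identically.

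The main obstacle is the $JV$-direction: using $\nabla^B_{JV} e_i = F_{JV}(e_i, \cdot)^{\sharp}$ from Lemma~\ref{Ch}, a direct computation yields
\[
\nabla^B_{JV} F_V(e_i, e_j) = [F_V, F_{JV}]_{ij},
\]
the endomorphism commutator, which by Lemma~\ref{L4}(2) equals $-R^B_{\alpha\beta ij}$. What remains is to show this commutator vanishes. I would argue by holonomy reduction: if $F_V = \alpha^T$ is nonzero, then as an everywhere nonzero parallel primitive $(1,1)$-form on the K\"ahler surface $(M^T, g_K^T)$, it forces the holonomy to reduce to $U(1) \times U(1)$, splitting $TM^T$ as a parallel orthogonal sum of two complex line bundles; every parallel $(1,1)$-form then acts diagonally on this splitting, so any two such forms commute pointwise. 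The case $F_V = 0$ is trivial, and the analogous calculation disposes of $\nabla^B F_{JV}$. In the application to Corollary~\ref{c:rigiditycor}, where the hypotheses force linear dependence of $\alpha^T$ and $\gamma^T$, the commutator $[F_V, F_{JV}]$ vanishes even more directly via Lemma~\ref{L7} (which gives $R^B_{\alpha\beta ij} = 0$ from the analogous conclusion for the Levi-Civita curvature and Lemma~\ref{L6}(2)), bypassing the holonomy argument.
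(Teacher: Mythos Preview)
Your argument is correct, and in the converse direction it is actually more complete than the paper's.  The paper establishes the identity
\[
\nabla^B H = \nabla^B H^T + (\nabla^B F_V)\wedge\eta + (\nabla^B F_{JV})\wedge J\eta,
\]
uses it to read off $\nabla^B H^T = \nabla^B F_V = \nabla^B F_{JV} = 0$ from $\nabla^B H = 0$, and then obtains $f$ constant by wedging $\nabla^B H^T = 0$ with $(\omega^T)^{n-3}$ to deduce $\nabla^B \theta_{g^T} = \nabla^B df = 0$.  Your route to the constancy of $f$ via $|H|^2 = \text{const}$ and the dilaton identity, followed by the substitution $u = e^{-f}$, is a clean alternative that avoids the (slightly delicate) wedge argument.

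For the converse in dimension three, the paper simply notes that $H^T = 0$ and that ``on the horizontal distribution $\nabla^B = \nabla$'', and declares the claim follows from the displayed identity.  This leaves the vertical derivatives $\nabla^B_V F_V, \nabla^B_{JV} F_V$ unaddressed.  You correctly identify that $\nabla^B_V F_V = 0$ is automatic by antisymmetry, while $\nabla^B_{JV} F_V = [F_V, F_{JV}] = -R^B_{\alpha\beta\,\cdot\,\cdot}$ is the genuine obstruction.  Your holonomy argument---a nonzero parallel primitive $(1,1)$-form on a K\"ahler surface forces a $U(1)\times U(1)$ holonomy reduction, so all parallel $(1,1)$-forms are simultaneously diagonal and hence commute---is a valid way to close this; it also explains why, in the paper's sole application (Corollary~\ref{c:rigiditycor}), the issue never surfaces, since there Lemma~\ref{L7} gives $R^B_{\alpha\beta ij}=0$ directly.
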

\begin{proof}
    Since $V$ and $JV$ are $\nabla^B$-parallel, taking the covariant derivative of \eqref{eq: torsion form} gives
    \begin{align} \label{f:covH}
    \nabla^B H = \nabla^B H^T + (\nabla^B F_V)\wedge\eta + (\nabla^B F_{JV})\wedge J\eta.
    \end{align}
    Notice by \Cref{Ch} that the covariant derivative $\nabla^B \sigma$ of a horizontal form $\sigma$ is horizontal.  Therefore, $H$ is $\nabla^B$-parallel if and only if $H^T$, $F_V$ and $F_{JV}$ are so.
    We recall that $H^T=-Jd\omega^T$. Thus, by wedging $\nabla^B H^T$ with $\left(\omega^T\right)^{n-3}$ we get that $\nabla^B\theta_{g^T}=0$. Then, \Cref{P3} leads to $\nabla^Bdf=0$ which implies $f$ constant since $M$ is compact.
    In complex dimension $3$, assuming that $f$ is constant, the transverse distribution is K\"ahler by \Cref{P3}.  In particular $H^T = 0$ and on the horizontal distribution $\N^B = \N$, hence the claim follows from (\ref{f:covH}).
\end{proof}

\subsection{Proof of Corollary~\ref{c:rigiditycor}}

\begin{proof}[Proof of Corollary \ref{c:rigiditycor}] We may assume without loss of generality that $(M, \omega, J)$ is non-K\"ahler, so the ansatz of \S \ref{s:dimred} holds.  As we have explained at the beginning of the section, each of the cases (1)-(3) of Corollary~\ref{c:rigiditycor} reduces to consider transversal K\"ahler structure $(M^T, J^T, \omega_K^T)$ of constant positive scalar curvature
$\Scal(\omega_K)$ and such that the transversal harmonic anti-self-dual $2$-forms $F_V=\alpha^T$ and $(F_{JV})_-=\gamma^T$ are proportional. 
Notice that, as $\rho_K^{T}= -F_{JV}$, the traceless Ricci tensor is 
\begin{align} \label{f:mainpf10}
    \mathring{\Rc}^{g^T}= -(\rho_K^T \circ J)^{\mathring{}}= \gamma^T \circ J. 
\end{align}
From Lemma~\ref{LP4}, we deduce that $\brs{\mathring{\Rc^{g^T}}}^2$ is constant so $\Rc^{g^T}$ has constant eigenvalues $\lambda \le \mu$, each of multiplicity $2$;  furthermore, by Proposition \ref{p:EM}, we have  $0 \le \lambda \le \mu$. This is a situation studied in \cite{VTA}.

We claim that $F_{J V}$ is parallel.  We split the argument into two cases, according to whether the eigenvalues $\lambda, \mu$ are distinct or not.  First, if the eigenvalues are equal, then $\mathring{\Rc}^{g^T} = 0$, thus $(F_{J V})_- = 0$, and $F_{JV}$ is parallel.  In case the eigenvalues are distinct, we adapt the argument of (\cite{VTA} Theorem 1.1) as follows.  The eigenspaces for the transverse Ricci tensor define an orthogonal splitting of the horizontal tangent space ${\mathcal H}= E_\lambda \oplus E_\mu$ at each point.  By  \cite{VTA} Lemma 2.1,  we obtain a second transverse almost complex structure $\bar{J}$, which defines a transverse almost K\"ahler structure $(g^T, \bar J)$ on $\mathcal H$.  Furthermore, as explained in \cite{VTA} Lemma~2.1, restricting the K\"ahler form $\omega$ to these two sub-bundles yields  $(1,1)$-forms $\ga, \gb$, which are closed.  Next we apply the argument of (\cite{VTA} Theorem 1.1) to show that the K\"ahler form $\bar{\omega}= g^T \bar J$ associated to $(g^T,\bar{J})$ is parallel with respect to the transverse Levi-Civita connection.  Observe that this argument depends on (\cite{VTA} Proposition 2.1),  which in turn is a pointwise statement which therefore holds in our situation on any local quotient of $M$ by the flows of $Z, W$.  Furthermore, the integral identity in  (\cite{VTA} Corollary 2.2) also extends in our situation as for any transversal forms $\gamma^T, \delta^T$, the $L^2$ inner product on $(M, g)$  satisfies   $\langle \pi^* d^*_{g^T} \gg^T, \pi^* \delta^T \rangle = \langle d^*_g (\pi^* \gg^T), \pi^* \delta^T \rangle$. Thus it follows that $\bar{\omega}$ is transversally parallel, and thus the transverse Ricci tensor is parallel.  Then finally $F_{J V}$ is parallel by (\ref{f:mainpf10}).  Turning to $F_{V}$, since it is trace-free, of type $(1,1)$ and of constant norm, it either vanishes (and is thus obviously parallel), or defines a nontrivial splitting ${\mathcal H}= E_{\lambda} \oplus E_{\mu}$ of the horizontal space as before.  By the argument above, this corresponds to a second K\"ahler structure $(g, \bar{J})$, with parallel K\"ahler form $\bar{\gw} = \gl F_{V}$ for a nonzero constant $\gl$.  Thus it follows that $F_{V}$ is parallel.

Thus we have shown that $F$ is parallel, and then it follows from \Cref{prop:cov deriv} that $\N^B H = 0$.  It follows from (\cite{brienza2024cyt} Theorem 1.1) (cf. also (\cite{barbaro2024pluriclosed} Theorem B)) that the universal cover is a product of a K\"ahler-Ricci flat space and a simply connected Bismut-flat space. Aside from the case $(M, g, J)$ is already K\"ahler Calabi-Yau, the K\"ahler factor can have dimension at most $1$, and is hence flat, thus the total space is Bismut flat.  The result then follows from (\cite{ZhengBismutflat} Corollary 4), noting that the BHE geometries of $\SU(2) \times \mathbb R \times \mathbb C$ correspond to the case when the transversal Ricci tensor ${\rm Rc}^{g^T}$  has constant eigenvalues  $0=\lambda<\mu$ (and hence $\alpha^T=0$) whereas the BHE geometries of $\SU(2)\times \SU(2)$ corresponds to the case when the transversal Ricci tensor ${\rm Rc}^{g^T}$  has constant eigenvalues $0<\lambda \le \mu$.
\end{proof}

\textbf{Conflict of interest statement:} On behalf of all authors, the corresponding author states that there is no conflict of interest.

\vspace{15pt}

\textbf{Data availability statement:} Data sharing is not applicable to this article as no datasets were generated or analyzed during the current study.


\end{document}